\title{Which Reducible Representations are Anosov?}
\author{Max Lahn}
\address{Department of Mathematics \\ University of Michigan \\ Ann Arbor, MI 48109}
\email{\href{mailto:maxlahn@umich.edu}{maxlahn@umich.edu}}
\DeclareDocumentCommand{\K}{}{ \mathbb{ K } }
\DeclareDocumentCommand{\Char}{}{ \mathcal{ X } }
\begin{document}

\begin{abstract}

We give a characterization of the Anosov condition for reducible representations in terms of the eigenvalue magnitudes of the irreducible block factors of its block diagonalization. As in previous work, these Anosov representations comprise a collection of bounded convex domains in a finite-dimensional vector space, and this perspective allows us to conclude for many non-elementary hyperbolic groups that connected components of the character variety which consist entirely of Anosov representations do not contain reducible representations.

\end{abstract}

\maketitle

\section{Introduction} \label{sec:introduction}

Discrete subgroups of the projective special linear group $ \PSL \of{ 2 , \K } $ are called \emph{Fuchsian} when $ \K = \R $ and \emph{Kleinian} when $ \K = \C $. The rich theory of such groups is central in the geometry, topology, and dynamics of Riemann surfaces and hyperbolic $ 3 $-manifolds, respectively, and has connections to complex dynamics, number theory, and mathematical physics. Work of Labourie \cite{Lab06}, Guichard--Wienhard \cite{GW12}, Bridgeman--Canary--Labourie--Sambarino \cite{BCLS15}, Gu\'{e}ritaud--Guichard--Kassel--Wienhard \cite{GGKW17}, and Kapovich--Leeb--Porti \cite{KLP17,KLP18} has established the theory of Anosov representations as a suitable analogue of convex cocompact Fuchsian and Kleinian groups in the setting of representations of $ \Gamma $ in higher rank Lie groups.

Further parallels to the convex cocompact Fuchsian and Kleinian cases are possible when the images of the Anosov representations under consideration are dense in the Zariski topology. However, Anosov representations in higher rank Lie groups need not be Zariski-dense, and the deformation theory of Anosov representions often must contend with the existence of such representations. For example, when $ \dim_{ \K } \of{ V } \geq 3 $, one may exploit an irreducible representation $ \SL_{ 2 } \of{ \K } \into \SL \of{ V } $ to obtain representations in $ \SL \of{ V } $ which are not Zariski dense from convex cocompact Fuchsian and Kleinian groups, and the deformations of such representations are of great interest. In the case $ \K = \R $, these comprise the Hitchin component of the character variety, and in the case $ \K = \C $, they contain the quasi-Fuchsian space.

In previous work \cite{Lah24}, we explored variations on this phenomenon by considering certain deformations of \emph{reducibly} embedded Fuchsian/Kleinian subgroups. Such deformations were parameterized by a finite-dimensional real vector space, and those corresponding to Anosov representations comprised a convex and bounded open subset. This article serves to generalize those results from reducible suspensions of $ q $-Fuchsian representations to all reducible representations and to make initial conclusions about the character variety.

Concretely, fix a non-elementary word hyperbolic group $ \Gamma $ and a finite-dimensional vector space $ V $ over a field $ \K $, either $ \R $ or $ \C $. Let $ \mathscr{ U } $ be a direct sum decomposition of $ V $; that is, $ \mathscr{ U } = \of{ U_{ j } }_{ j = 1 }^{ m } $ is a finite sequence of pairwise trivially intersecting subspaces $ U_{ j } \subseteq V $ which jointly span $ V $. As described in \autoref{sec:background}, the partial sums of the factors of this decomposition comprise a partial flag in $ V $. Representations of $ \Gamma $ in $ \GL \of{ V } $ which preserve each factor subspace $ U_{ j } $ are called \emph{block diagonal} relative to $ \mathscr{ U } $, and those which preserve the associated partial flag are called \emph{block upper triangular} relative to $ \mathscr{ U } $.

We first adapt Kassel--Potrie's \cite{KP22} characterization of the Anosov condition in terms of the linear growth of certain functions of the Jordan projection to the setting of block upper triangular representations. Anosov representations in $ \GL \of{ V } $ come in several flavors $ P_{ \theta } $, one for each non-empty subset $ \theta $ of $ \Delta_{ V } = \set{ 1 , \dotsc , \dim_{ \K } \of{ V } - 1 } $. We show that to each block upper triangular $ P_{ \theta } $-Anosov representation is associated a family of integers, called the \emph{large eigenvalue $ \theta $-configuration} relative to $ \mathscr{ U } $, which catalogues the distribution of generalized eigenvectors of large eigenvalue among the preserved partial flag's subspaces. For ease of exposition, we emphasize here the simpler result in the block diagonal setting; the general result for block upper triangular representations is \autoref{thm:eigenvalueConfiguration}.

\begin{theorem} \label{thm:newEigenvalueConfiguration}

Fix a representation $ \eta \colon \Gamma \to \GL \of{ V } $ and a subset $ \theta \subseteq \Delta_{ V } $. If $ \eta $ is block diagonal relative to $ \mathscr{ U } $, with block decomposition
\[
\eta = \bigoplus_{ j = 1 }^{ m }{ \eta_{ j } } ,
\]
then $ \eta $ is $ P_{ \theta } $-Anosov if and only if there is a family $ Q = \of{ q_{ j , k } }_{ j , k } $ of integers $ 0 \leq q_{ j , k } \leq \dim_{ \K } \of{ U_{ j } } $, indexed by $ 1 \leq j \leq m $ and $ k \in \theta $, with the following properties:
\begin{enumerate}

\item[(i)]

$ \sum_{ j = 1 }^{ m }{ q_{ j , k } } = k $ for all $ k \in \theta $.

\item[(ii)]

$ \log \of{ \frac{ \lambda_{ q_{ i , k } } \of{ \eta_{ i } \of{ \gamma } } }{ \lambda_{ q_{ j , k } + 1 } \of{ \eta_{ j } \of{ \gamma } } } } $ grows at least linearly in translation length $ \norm{ \gamma }_{ \Sigma } $ for all $ k \in \theta $ and $ 1 \leq i , j \leq m $ so that $ q_{ i , k } > 0 $ and $ q_{ j , k } < \dim_{ \K } \of{ U_{ j } } $.

\end{enumerate}
In this case, such a family $ Q $ is uniquely defined, and the following also hold:
\begin{enumerate}

\item[(1)]

The $ P_{ \theta } $-Anosov limit map $ \xi_{ \eta }^{ \theta } \colon \partial \Gamma \to \Flag_{ \theta } \of{ V } $ has the property that
\[
q_{ j , k } = \dim_{ \K } \of{ U_{ j } \cap \xi_{ \eta }^{ \theta } \of{ z } }
\]
for all $ 1 \leq j \leq m $, $ k \in \theta $, and $ z \in \partial \Gamma $.

\item[(2)]

For each $ 1 \leq j \leq m $, let $ \theta_{ j } = \set{ q_{ j , k } }_{ k \in \theta } \cap \Delta_{ U_{ j } } $. If $ \theta_{ j } \neq \emptyset $, then $ \eta_{ j } $ is $ P_{ \theta_{ j } } $-Anosov, with $ P_{ \theta } $-Anosov limit map $ \xi_{ \eta_{ j } }^{ \theta_{ j } } \colon \partial \Gamma \to \Flag_{ \theta_{ j } } \of{ U_{ j } } $ given by the formula
\[
\xi_{ \eta_{ j } }^{ \theta_{ j } } \of{ z } = \of{ U_{ j } \cap \xi_{ \eta }^{ k_{ q } } \of{ z } }_{ q \in \theta_{ j } } ,
\]
where for each $ q \in \theta_{ j } $, we choose any $ k_{ q } \in \Delta_{ V } $ so that $ q_{ j , k_{ q } } = q $.

\item[(3)]

The block diagonal representation $ \eta $ has $ P_{ \theta } $-Anosov limit map $ \xi_{ \eta }^{ \theta } \colon \partial \Gamma \to \Flag_{ \theta } \of{ V } $ given by the formula
\[
\xi_{ \eta }^{ \theta } \of{ z } = \of{ \bigoplus_{ q_{ i , k } > 0 }{ \xi_{ \eta_{ i } }^{ q_{ i , k } } \of{ z } } }_{ k \in \theta } .
\]

\end{enumerate}

\end{theorem}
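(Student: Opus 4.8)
The plan is to derive both directions of the equivalence from the adapted Kassel--Potrie criterion recorded above --- that a block diagonal representation is $ P_{ \theta } $-Anosov exactly when $ \log \of{ \lambda_{ k } \of{ \eta \of{ \gamma } } / \lambda_{ k + 1 } \of{ \eta \of{ \gamma } } } $ grows at least linearly in $ \norm{ \gamma }_{ \Sigma } $ for each $ k \in \theta $ --- together with elementary bookkeeping about how the eigenvalue moduli of $ \eta \of{ \gamma } = \bigoplus_{ j }{ \eta_{ j } \of{ \gamma } } $ interleave those of the blocks $ \eta_{ j } \of{ \gamma } $. Writing $ E_{ k } \of{ g } $ for the sum of the generalized eigenspaces of $ g \in \GL \of{ V } $ attached to its $ k $ eigenvalues of largest modulus --- defined whenever $ \lambda_{ k } \of{ g } > \lambda_{ k + 1 } \of{ g } $ --- the structural fact I would isolate at the outset is that, since each $ U_{ j } $ is $ \eta \of{ \gamma } $-invariant, $ E_{ k } \of{ \eta \of{ \gamma } } = \bigoplus_{ j }{ \of{ U_{ j } \cap E_{ k } \of{ \eta \of{ \gamma } } } } $ whenever the modulus gap at position $ k $ is not straddled by a tie, and $ U_{ j } \cap E_{ k } \of{ \eta \of{ \gamma } } = E_{ q } \of{ \eta_{ j } \of{ \gamma } } $ for $ q = \dim_{ \K } \of{ U_{ j } \cap E_{ k } \of{ \eta \of{ \gamma } } } $; hence $ \lambda_{ k } \of{ \eta \of{ \gamma } } = \min_{ i }{ \lambda_{ q_{ i } } \of{ \eta_{ i } \of{ \gamma } } } $ over blocks with $ q_{ i } > 0 $ and $ \lambda_{ k + 1 } \of{ \eta \of{ \gamma } } = \max_{ j }{ \lambda_{ q_{ j } + 1 } \of{ \eta_{ j } \of{ \gamma } } } $ over blocks with $ q_{ j } < \dim_{ \K } \of{ U_{ j } } $.

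For the forward implication I would establish property (1) first, since it produces the family $ Q $ and the rest then follows by evaluation. Assuming $ \eta $ is $ P_{ \theta } $-Anosov, I fix $ k \in \theta $ and consider the function $ f_{ j , k } \of{ z } = \dim_{ \K } \of{ U_{ j } \cap \xi_{ \eta }^{ k } \of{ z } } $ on $ \partial \Gamma $. This function is upper semicontinuous, its superlevel sets being closed by continuity of $ \xi_{ \eta }^{ k } $, and it is $ \Gamma $-invariant, because $ \eta \of{ \gamma } U_{ j } = U_{ j } $ forces $ U_{ j } \cap \xi_{ \eta }^{ k } \of{ \gamma z } = \eta \of{ \gamma } \of{ U_{ j } \cap \xi_{ \eta }^{ k } \of{ z } } $. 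Its maximum locus is therefore a nonempty closed $ \Gamma $-invariant subset of $ \partial \Gamma $, hence all of $ \partial \Gamma $ by minimality of the boundary action, so $ f_{ j , k } $ is constant; I take $ q_{ j , k } $ to be this constant. Evaluating at an attracting fixed point $ \gamma^{ + } $ of an infinite-order element, where $ \xi_{ \eta }^{ k } \of{ \gamma^{ + } } = E_{ k } \of{ \eta \of{ \gamma } } $ and the Anosov property supplies the required modulus gap, the structural fact yields $ \sum_{ j }{ q_{ j , k } } = k $, which is (i), and identifies $ \lambda_{ k } \of{ \eta \of{ \gamma } } $ and $ \lambda_{ k + 1 } \of{ \eta \of{ \gamma } } $ with the stated minimum and maximum, so that each ratio appearing in (ii) dominates $ \lambda_{ k } \of{ \eta \of{ \gamma } } / \lambda_{ k + 1 } \of{ \eta \of{ \gamma } } $ and hence grows at least linearly in $ \norm{ \gamma }_{ \Sigma } $ by the adapted criterion; finite-order elements are handled separately, their images having all eigenvalue moduli equal to $ 1 $. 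Conclusion (2) then comes from the ordinary Kassel--Potrie criterion applied to $ \eta_{ j } $ on $ U_{ j } $ via (ii) with $ i = j $, and the limit-map formulas in (2) and (3) follow by verifying them at the points $ \gamma^{ + } $, where they reduce to the block decomposition of $ E_{ k } $, and extending by continuity and by density of these fixed points in $ \partial \Gamma $.

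For the converse, given a family $ Q $ satisfying (i) and (ii), I would count the eigenvalue moduli of $ \eta \of{ \gamma } $ block by block: for each $ i $ with $ q_{ i , k } > 0 $, the moduli $ \lambda_{ 1 } \of{ \eta_{ i } \of{ \gamma } } , \dotsc , \lambda_{ q_{ i , k } } \of{ \eta_{ i } \of{ \gamma } } $ all lie at or above $ \min_{ i' }{ \lambda_{ q_{ i' , k } } \of{ \eta_{ i' } \of{ \gamma } } } $, and by (i) these account for $ k $ moduli in all, so $ \lambda_{ k } \of{ \eta \of{ \gamma } } $ is at least this minimum; symmetrically, $ \lambda_{ k + 1 } \of{ \eta \of{ \gamma } } $ is at most the corresponding maximum over blocks with $ q_{ j , k } < \dim_{ \K } \of{ U_{ j } } $. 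Thus $ \log \of{ \lambda_{ k } \of{ \eta \of{ \gamma } } / \lambda_{ k + 1 } \of{ \eta \of{ \gamma } } } $ dominates the minimum over $ i $ and $ j $ of the expressions in (ii), which grows at least linearly in $ \norm{ \gamma }_{ \Sigma } $, so the adapted criterion makes $ \eta $ a $ P_{ \theta } $-Anosov representation. For uniqueness I would observe that, once $ \eta $ is known to be Anosov, (ii) says precisely that for $ \gamma $ of large translation length the $ k $ retained moduli are separated from all others by a genuine, growing gap, hence are exactly the $ k $ largest moduli of $ \eta \of{ \gamma } $, which pins down $ q_{ j , k } = \dim_{ \K } \of{ U_{ j } \cap E_{ k } \of{ \eta \of{ \gamma } } } $ in agreement with (1); alternatively, the statement should follow from \autoref{thm:eigenvalueConfiguration} by specialization to the block diagonal case.

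The step I expect to be the main obstacle is the claim in the forward direction that $ \dim_{ \K } \of{ U_{ j } \cap \xi_{ \eta }^{ k } \of{ z } } $ is independent of $ z $: since $ \partial \Gamma $ need not be connected, this cannot be read off from continuity of $ \xi_{ \eta }^{ k } $ alone, and the argument must genuinely combine the $ \Gamma $-equivariance afforded by block diagonality with the minimality of the action on $ \partial \Gamma $. The rest is organizational --- pushing the minimum--maximum identities through the degenerate values $ q_{ j , k } \in \set{ 0 , \dim_{ \K } \of{ U_{ j } } } $, where $ \xi_{ \eta_{ j } }^{ q_{ j , k } } $ is not a genuine flag factor and must be interpreted as $ 0 $ or $ U_{ j } $, and separating the finite-order elements of $ \Gamma $, all of whose images have eigenvalue moduli $ 1 $, from the infinite-order ones.
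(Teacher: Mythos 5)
Your proposal is correct and follows the same skeleton as the paper's argument (which proves the more general block upper triangular version, Theorem~3.6, and then reads off the block diagonal case), but it deviates at a few specific points in ways worth noting. For the constancy of $q_{j,k}$ over $\partial\Gamma$ --- the step you correctly flag as the crux --- you appeal to upper semicontinuity of $z \mapsto \dim_{\K}(U_j \cap \xi_\eta^k(z))$ together with minimality of the boundary action, observing that the maximum locus of an upper semicontinuous $\Gamma$-invariant function is closed, $\Gamma$-invariant, and nonempty, hence all of $\partial\Gamma$. The paper reaches the same conclusion by computing $q_{j,k}$ at a single attracting fixed point $\gamma_0^+$, using dynamics preservation and equivariance to see the values agree on the $\Gamma$-orbit of $\gamma_0^+$, and then invoking density of that orbit together with closedness of the set $S^{\mathscr{U},\theta}_Q(V)$ of strongly structured flags. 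Both are routed through minimality, but yours avoids explicitly introducing the structured-flag varieties as closed sets in $\Flag_\theta(V)$. Your converse is also leaner: rather than first deducing strict inequalities $\lambda_{q_{i,k}}(\eta_i(\gamma)) > \lambda_{q_{j,k}+1}(\eta_j(\gamma))$ via a power-trick limit and then proving equality of the logarithmic eigenvalue gap with the block-by-block minimum (as the paper does through Corollary~3.5), you only need the one-sided counting estimate $\lambda_k(\eta(\gamma)) \geq \min_i \lambda_{q_{i,k}}(\eta_i(\gamma))$ and $\lambda_{k+1}(\eta(\gamma)) \leq \max_j \lambda_{q_{j,k}+1}(\eta_j(\gamma))$, which gives the linear lower bound directly. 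That is a genuine simplification for the direction you need, though it proves slightly less (an inequality rather than an identity for the gap). Two housekeeping items you glossed over but which the paper makes explicit: one should check that the index set in (ii) is nonempty for each $k$ (Lemma~3.4 in the paper, a two-line consequence of (i)), and the structural fact you isolate at the outset --- that $U_j \cap E_k(\eta(\gamma)) = E_q(\eta_j(\gamma))$ with $q = \dim_{\K}(U_j \cap E_k(\eta(\gamma)))$, and the resulting min/max identities for $\lambda_k$ and $\lambda_{k+1}$ --- is precisely the content of the paper's Proposition~3.3 and Corollary~3.5, which require a careful generalized-eigenvector basis argument that a full write-up would need to reproduce.
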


Block upper triangular representations are equivalent in the character variety to block diagonal representations. The block diagonal representations relative to $ \mathscr{ U } $ may be parameterized by homomorphisms $ \delta \colon \Gamma \to \R $ and $ \varphi \colon \Gamma \to D_{ \mathscr{ U } } $ and representations $ \zeta \colon \Gamma \to N_{ \mathscr{ U } } $, where $ D_{ \mathscr{ U } } $ is the finite-dimensional real vector space
\[
D_{ \mathscr{ U } } = \set{ \vect{ x } \in \R^{ m } : \sum_{ j = 1 }^{ m }{ \dim_{ \K } \of{ U_{ j } } x_{ j } } = 0 , \textrm{ and } x_{ j } = 0 \textrm{ for all } 1 \leq j \leq m \textrm{ so that } U_{ j } = \set{ \vect{ 0 } } } ,
\]
and $ N_{ \mathscr{ U } } $ is the matrix group
\[
N_{ \mathscr{ U } } = \set{ \bigoplus_{ j = 1 }^{ m }{ \vect{ B }_{ j } } \in \Aut_{ \K } \of{ \mathscr{ U } } : \abs{ \det \of{ \vect{ B }_{ j } } } = 1 \textrm{ for all } 1 \leq j \leq m } .
\]
Concretely, given such homomorphisms $ \delta \colon \Gamma \to \R $ and $ \varphi \colon \Gamma \to D_{ \mathscr{ U } } $ and a representation $ \zeta \colon \Gamma \to N_{ \mathscr{ U } } $, we obtain a block diagonal representation $ \eta = \beta_{ \mathscr{ U } } \of{ \delta , \varphi , \zeta } \colon \Gamma \to \GL \of{ V } $ defined by the formula
\[
\eta \of{ \gamma } = \bigoplus_{ j = 1 }^{ m }{ e^{ \delta \of{ \gamma } + \varphi_{ j } \of{ \gamma } } \zeta_{ j } \of{ \gamma } } . 
\]
We will call homomorphisms $ \Gamma \to D_{ \mathscr{ U } } $ and representations $ \Gamma \to N_{ \mathscr{ U } } $ \emph{block deformations} and \emph{block normalized representations}, respectively. The Anosov conditions for $ \eta = \beta_{ \mathscr{ U } } \of{ \delta , \varphi , \zeta } $ can be seen to be independent of $ \delta $, so that for a fixed subset $ \theta \subseteq \Delta_{ V } $ and block normalized representation $ \zeta \colon \Gamma \to N_{ \mathscr{ U } } $, we construct
\[
A^{ \mathscr{ U } }_{ \theta } \of{ \zeta } = \set{ \varphi \in \hom \of{ \Gamma , D_{ \mathscr{ U } } } : \beta_{ \mathscr{ U } } \of{ \delta , \varphi , \zeta } \textrm{ is $ P_{ \theta } $-Anosov for all (equivalently, any) } \delta \in \hom \of{ \Gamma , \R } } .
\]
That is, $ A^{ \mathscr{ U } }_{ \theta } \of{ \zeta } $ is the space of block deformations $ \varphi $ of the block normalized representation $ \zeta $ which result in a $ P_{ \theta } $-Anosov block diagonal representation $ \beta_{ \mathscr{ U } } \of{ \delta , \varphi , \zeta } $. If the block normalized representation $ \zeta \colon \Gamma \to N_{ \mathscr{ U } } $ is itself $ P_{ \theta } $-Anosov, then we are able to concretely characterize $ A^{ \mathscr{ U } }_{ \theta } \of{ \zeta } $ in terms of $ \varphi $ and $ \zeta $, in analogy to \cite[Theorem~2]{Lah24}.

\begin{restatable}{theorem}{AnosovCharacterization} \label{thm:AnosovCharacterization}

Fix a block normalized representation $ \zeta \colon \Gamma \to N_{ \mathscr{ U } } $ and a subset $ \theta \subseteq \Delta_{ V } $. If $ \zeta $ is $ P_{ \theta } $-Anosov, with large eigenvalue $ \theta $-configuration $ Q = \of{ q_{ j , k } }_{ j , k } $ relative to $ \mathscr{ U } $, then
\[
A^{ \mathscr{ U } }_{ \theta } \of{ \zeta } = \set{ \varphi \in \hom \of{ \Gamma , D_{ \mathscr{ U } } } : \sup_{ i , j , k , \gamma }{ \frac{ \varphi_{ j } \of{ \gamma } - \varphi_{ i } \of{ \gamma } }{ \log \of{ \frac{ \lambda_{ q_{ i , k } } \of{ \zeta_{ i } \of{ \gamma } } }{ \lambda_{ q_{ j , k } + 1 } \of{ \zeta_{ j } \of{ \gamma } } } } } } < 1 } .
\]
where the above supremum is indexed by all $ 1 \leq i , j \leq m $ and $ k \in \theta $ so that $ q_{ i , k } > 0 $ and $ q_{ j , k } < \dim_{ \K } \of{ U_{ j } } $, and infinite order $ \gamma \in \Gamma $.

\end{restatable}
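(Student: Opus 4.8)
The plan is to reduce both inclusions to the eigenvalue-configuration criterion of \autoref{thm:newEigenvalueConfiguration}; the argument parallels \cite[Theorem~2]{Lah24}, with that theorem's suspension-specific computation replaced by \autoref{thm:newEigenvalueConfiguration}. Fix any $ \delta \in \hom \of{ \Gamma , \R } $ and block deformation $ \varphi $, and set $ \eta = \beta_{ \mathscr{ U } } \of{ \delta , \varphi , \zeta } $, so $ \eta_{ j } \of{ \gamma } = e^{ \delta \of{ \gamma } + \varphi_{ j } \of{ \gamma } } \zeta_{ j } \of{ \gamma } $. Scaling a linear map by a positive constant scales each of its eigenvalue magnitudes by that constant, so $ \log \lambda_{ r } \of{ \eta_{ j } \of{ \gamma } } = \delta \of{ \gamma } + \varphi_{ j } \of{ \gamma } + \log \lambda_{ r } \of{ \zeta_{ j } \of{ \gamma } } $ for every position $ r $, whence
\[
\log \of{ \frac{ \lambda_{ r } \of{ \eta_{ i } \of{ \gamma } } }{ \lambda_{ r' } \of{ \eta_{ j } \of{ \gamma } } } } = \of{ \varphi_{ i } \of{ \gamma } - \varphi_{ j } \of{ \gamma } } + \log \of{ \frac{ \lambda_{ r } \of{ \zeta_{ i } \of{ \gamma } } }{ \lambda_{ r' } \of{ \zeta_{ j } \of{ \gamma } } } }
\]
for all positions $ r , r' $ and blocks $ i , j $. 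In particular the $ \delta $-dependence cancels, which is the stated independence of $ A^{ \mathscr{ U } }_{ \theta } \of{ \zeta } $ from $ \delta $. Call a triple $ \of{ i , j , k } $ \emph{admissible} when $ k \in \theta $, $ q_{ i , k } > 0 $, and $ q_{ j , k } < \dim_{ \K } \of{ U_{ j } } $, and set $ L_{ i , j , k } \of{ \gamma } = \log \of{ \lambda_{ q_{ i , k } } \of{ \zeta_{ i } \of{ \gamma } } / \lambda_{ q_{ j , k } + 1 } \of{ \zeta_{ j } \of{ \gamma } } } $, so that the supremum in the statement runs over admissible triples and infinite-order $ \gamma $ with denominator $ L_{ i , j , k } \of{ \gamma } $. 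Two coarse facts about $ \zeta $ will be used repeatedly: since $ \zeta $ is block diagonal and $ P_{ \theta } $-Anosov with configuration $ Q $, \autoref{thm:newEigenvalueConfiguration}(ii) gives that each $ L_{ i , j , k } $ is positive on infinite-order $ \gamma $ and grows at least linearly in $ \norm{ \gamma }_{ \Sigma } $; and submultiplicativity of the operator norm together with $ \abs{ \det \zeta_{ j } \of{ \gamma } } = 1 $ bounds each $ \abs{ \log \lambda_{ r } \of{ \zeta_{ j } \of{ \gamma } } } $, hence each $ \abs{ L_{ i , j , k } \of{ \gamma } } $, above by a constant multiple of $ \norm{ \gamma }_{ \Sigma } $.

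For the inclusion $ \supseteq $, suppose $ \varphi $ satisfies the stated condition and write $ s < 1 $ for the value of the supremum, so $ \varphi_{ j } \of{ \gamma } - \varphi_{ i } \of{ \gamma } \leq s \, L_{ i , j , k } \of{ \gamma } $ for admissible $ \of{ i , j , k } $ and infinite-order $ \gamma $. I would check that the family $ Q $ itself verifies \autoref{thm:newEigenvalueConfiguration} for $ \eta $: its condition (i) involves only the integers $ q_{ j , k } $ and is inherited from $ \zeta $, while for condition (ii) the identity above gives, for admissible $ \of{ i , j , k } $ and infinite-order $ \gamma $,
\[
\log \of{ \frac{ \lambda_{ q_{ i , k } } \of{ \eta_{ i } \of{ \gamma } } }{ \lambda_{ q_{ j , k } + 1 } \of{ \eta_{ j } \of{ \gamma } } } } = \of{ \varphi_{ i } \of{ \gamma } - \varphi_{ j } \of{ \gamma } } + L_{ i , j , k } \of{ \gamma } \geq \of{ 1 - s } L_{ i , j , k } \of{ \gamma } ,
\]
which grows at least linearly in $ \norm{ \gamma }_{ \Sigma } $ because $ 1 - s > 0 $; finite-order $ \gamma $ give zero on both sides. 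Hence $ \beta_{ \mathscr{ U } } \of{ \delta , \varphi , \zeta } $ is $ P_{ \theta } $-Anosov for every $ \delta $, i.e.\ $ \varphi \in A^{ \mathscr{ U } }_{ \theta } \of{ \zeta } $.

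The reverse inclusion is where the real work lies. Fix $ \varphi \in A^{ \mathscr{ U } }_{ \theta } \of{ \zeta } $ and let $ Q' = \of{ q'_{ j , k } }_{ j , k } $ be the unique large eigenvalue configuration of $ \eta $ supplied by \autoref{thm:newEigenvalueConfiguration}. The crux, and the step I expect to be the main obstacle, is to show $ Q' = Q $. I would argue by contradiction: if not, some $ k \in \theta $ has $ \of{ q'_{ i , k } }_{ i } \neq \of{ q_{ i , k } }_{ i } $, and since both tuples sum to $ k $ there are blocks $ a , b $ with $ q'_{ a , k } > q_{ a , k } $ and $ q'_{ b , k } < q_{ b , k } $. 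These strict inequalities make $ \of{ a , b , k } $ an admissible triple for $ \eta $ and $ \of{ b , a , k } $ one for $ \zeta $, so both appear in the respective instances of \autoref{thm:newEigenvalueConfiguration}(ii). From $ q_{ b , k } \geq q'_{ b , k } + 1 $, $ q_{ a , k } + 1 \leq q'_{ a , k } $, and the monotonicity $ \lambda_{ r } \geq \lambda_{ r + 1 } $ one gets
\[
\log \of{ \frac{ \lambda_{ q'_{ a , k } } \of{ \zeta_{ a } \of{ \gamma } } }{ \lambda_{ q'_{ b , k } + 1 } \of{ \zeta_{ b } \of{ \gamma } } } } \leq - \log \of{ \frac{ \lambda_{ q_{ b , k } } \of{ \zeta_{ b } \of{ \gamma } } }{ \lambda_{ q_{ a , k } + 1 } \of{ \zeta_{ a } \of{ \gamma } } } } ,
\]
and the right-hand side tends to $ - \infty $ at least linearly in $ \norm{ \gamma }_{ \Sigma } $ by \autoref{thm:newEigenvalueConfiguration}(ii) for $ \zeta $. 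Substituting this into the identity of the first paragraph alongside \autoref{thm:newEigenvalueConfiguration}(ii) for $ \eta $ at $ \of{ a , b , k } $ forces $ \of{ \varphi_{ a } - \varphi_{ b } } \of{ \gamma } $ to be bounded below by a positive multiple of $ \norm{ \gamma }_{ \Sigma } $ once $ \norm{ \gamma }_{ \Sigma } $ is large. But $ \varphi_{ a } - \varphi_{ b } \colon \Gamma \to \R $ is a homomorphism, so $ \of{ \varphi_{ a } - \varphi_{ b } } \of{ \gamma^{ -1 } } = - \of{ \varphi_{ a } - \varphi_{ b } } \of{ \gamma } $ whereas $ \norm{ \gamma^{ -1 } }_{ \Sigma } = \norm{ \gamma }_{ \Sigma } $; applying the bound to both $ \gamma $ and $ \gamma^{ -1 } $ for an infinite-order $ \gamma $ of sufficiently large translation length — which exists since $ \Gamma $ is non-elementary — is a contradiction. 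Hence $ Q' = Q $.

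With $ Q' = Q $ established, \autoref{thm:newEigenvalueConfiguration}(ii) for $ \eta $ with configuration $ Q $, combined with the identity of the first paragraph, shows for every admissible $ \of{ i , j , k } $ that $ \of{ \varphi_{ i } - \varphi_{ j } } \of{ \gamma } + L_{ i , j , k } \of{ \gamma } $ is positive on every infinite-order $ \gamma $ and bounded below by a positive multiple of $ \norm{ \gamma }_{ \Sigma } $; dividing by $ L_{ i , j , k } \of{ \gamma } > 0 $ and using the linear upper bound on $ L_{ i , j , k } $ from the first paragraph bounds $ \of{ \varphi_{ j } \of{ \gamma } - \varphi_{ i } \of{ \gamma } } / L_{ i , j , k } \of{ \gamma } $ uniformly below $ 1 $, across all admissible triples and infinite-order $ \gamma $. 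Therefore the supremum in the statement is strictly less than $ 1 $, so $ \varphi $ lies in the right-hand set, completing the proof. The configuration-invariance claim $ Q' = Q $ is the only genuinely non-routine ingredient: it is what licenses using the fixed configuration $ Q $ of $ \zeta $ in the characterization, and proving it requires playing \autoref{thm:newEigenvalueConfiguration}(ii) for $ \eta $ and for $ \zeta $ against one another together with the fact that block deformations, being homomorphisms, are odd under inversion.
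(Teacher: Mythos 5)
Your proposal is correct, but it takes a genuinely different route from the paper's proof on the reverse inclusion.

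The inclusion $ \supseteq $ (your ``$ s < 1 $ implies Anosov'') matches the paper's argument essentially verbatim. The reverse inclusion is where you diverge. The paper proves the contrapositive, split into two cases: for $ s > 1 $ it exhibits an infinite-order $ \gamma $ with a negative logarithmic gap and then appeals to the observation, made in the proof of \autoref{thm:domain}, that every $ P_{ \theta } $-Anosov $ \beta_{ \mathscr{ U } } \of{ \delta , \varphi , \zeta } $ must share the large eigenvalue $ \theta $-configuration $ Q $ of $ \zeta $ — this comes from choosing an infinite-order element $ \gamma_{ 0 } \in \comm{ \Gamma }{ \Gamma } $, on which $ \delta $ and $ \varphi $ vanish so $ \eta \of{ \gamma_{ 0 } } = \zeta \of{ \gamma_{ 0 } } $; for the boundary case $ s = 1 $ it invokes homogeneity of $ s $ in $ \varphi $ together with openness of the Anosov condition (\autoref{thm:stability}) to conclude $ \eta $ is a limit of non-Anosov representations and hence itself non-Anosov. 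You instead prove $ \subseteq $ directly: assuming $ \eta $ Anosov, you derive configuration invariance $ Q' = Q $ by a self-contained contradiction that plays condition (ii) of \autoref{thm:newEigenvalueConfiguration} for $ \eta $ and for $ \zeta $ against each other, exploiting that block deformations are homomorphisms and hence odd under $ \gamma \mapsto \gamma^{ -1 } $ while translation length is even; you then obtain $ s < 1 $ by combining the linear lower bound on the $ \eta $-gap with the linear upper bound on $ L_{ i , j , k } $. Your route avoids both the appeal to the commutator-subgroup observation and the separate boundary-case argument, and gives an independent, structural proof of configuration invariance; the paper's route is shorter once \autoref{thm:domain} is in hand. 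One place your writeup glosses a detail: the ``uniformly below $ 1 $'' claim at the end needs a word about infinite-order $ \gamma $ of small translation length (where $ a \norm{ \gamma }_{ \Sigma } - b $ may be non-positive); since the relevant quantities are class functions and a hyperbolic group has only finitely many conjugacy classes of bounded translation length, the strict positivity of the gap on each such class suffices, but that step deserves to be said explicitly.
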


One can see that if a block normalized representation $ \zeta \colon \Gamma \to N_{ \mathscr{ U } } $ is $ P_{ \theta } $-Anosov, then \autoref{thm:AnosovCharacterization} expresses $ A^{ \mathscr{ U } }_{ \theta } \of{ \zeta } $ as an increasing union of intersections of half-spaces, so that $ A^{ \mathscr{ U } }_{ \theta } \of{ \zeta } $ is convex. Moreover, since the above supremum is a homogeneous function of $ \varphi $, $ A^{ \mathscr{ U } }_{ \theta } \of{ \zeta } $ is also bounded. In fact, these properties hold regardless of whether or not $ \zeta $ is $ P_{ \theta } $-Anosov, in analogy to \cite[Theorem~1]{Lah24}.

\begin{restatable}{theorem}{domain} \label{thm:domain}

$ A^{ \mathscr{ U } }_{ \theta } \of{ \zeta } $ is a convex and bounded open subset $ \hom \of{ \Gamma , D_{ \mathscr{ U } } } $ for any direct sum decomposition $ \mathscr{ U } $ of $ V $, non-empty subset $ \emptyset \subsetneq \theta \subseteq \Delta_{ V }$, and block normalized representation $ \zeta \colon \Gamma \to N_{ \mathscr{ U } } $.

\end{restatable}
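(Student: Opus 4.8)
The plan is to combine stability of Anosov representations with \autoref{thm:newEigenvalueConfiguration}, organizing $ A^{ \mathscr{ U } }_{ \theta } \of{ \zeta } $ by which large eigenvalue $ \theta $-configuration is realized and then ruling out the one obstruction to convexity this organization could introduce. If $ A^{ \mathscr{ U } }_{ \theta } \of{ \zeta } = \emptyset $ there is nothing to prove, so assume it is non-empty. Openness is the easy part and uses no hypothesis on $ \zeta $: since the $ P_{ \theta } $-Anosov condition is independent of $ \delta $, the set $ A^{ \mathscr{ U } }_{ \theta } \of{ \zeta } $ is exactly the preimage of the $ P_{ \theta } $-Anosov locus under the continuous (indeed real-analytic) map $ \varphi \mapsto \beta_{ \mathscr{ U } } \of{ 0 , \varphi , \zeta } $ from the finite-dimensional vector space $ \hom \of{ \Gamma , D_{ \mathscr{ U } } } $ into $ \hom \of{ \Gamma , \GL \of{ V } } $, and the $ P_{ \theta } $-Anosov locus is open by \cite{Lab06, GW12, BCLS15, GGKW17}.

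For convexity and boundedness I would first decompose $ A^{ \mathscr{ U } }_{ \theta } \of{ \zeta } $ over configurations. For a block deformation $ \varphi $, the representation $ \eta = \beta_{ \mathscr{ U } } \of{ 0 , \varphi , \zeta } $ is block diagonal relative to $ \mathscr{ U } $ with factors $ \eta_{ j } = e^{ \varphi_{ j } } \zeta_{ j } $, so $ \lambda_{ \ell } \of{ \eta_{ j } \of{ \gamma } } = e^{ \varphi_{ j } \of{ \gamma } } \lambda_{ \ell } \of{ \zeta_{ j } \of{ \gamma } } $ for every $ \ell $; hence the quantity in condition (ii) of \autoref{thm:newEigenvalueConfiguration} for $ \eta $ equals
\[
\of{ \varphi_{ i } - \varphi_{ j } } \of{ \gamma } + \log \frac{ \lambda_{ q_{ i , k } } \of{ \zeta_{ i } \of{ \gamma } } }{ \lambda_{ q_{ j , k } + 1 } \of{ \zeta_{ j } \of{ \gamma } } } ,
\]
a linear functional of $ \varphi $ plus a fixed function of $ \gamma $. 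Thus \autoref{thm:newEigenvalueConfiguration} presents $ A^{ \mathscr{ U } }_{ \theta } \of{ \zeta } $ as the disjoint union, over families $ Q = \of{ q_{ j , k } } $ of integers $ 0 \leq q_{ j , k } \leq \dim_{ \K } \of{ U_{ j } } $ with $ \sum_{ j } q_{ j , k } = k $, of the sets $ A^{ Q } $ of those $ \varphi $ for which the displayed function grows at least linearly in $ \norm{ \gamma }_{ \Sigma } $ whenever $ k \in \theta $, $ q_{ i , k } > 0 $, and $ q_{ j , k } < \dim_{ \K } \of{ U_{ j } } $; the union is disjoint because, by \autoref{thm:newEigenvalueConfiguration}, such a $ Q $ is unique when it exists. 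Each $ A^{ Q } $ is an increasing union, as the admissible growth rate shrinks to zero, of intersections of closed half-spaces of $ \hom \of{ \Gamma , D_{ \mathscr{ U } } } $ — exactly the structure discussed after \autoref{thm:AnosovCharacterization} — so $ A^{ Q } $ is convex; and $ A^{ Q } $ is bounded because if it were unbounded it would (being convex and open) contain a ray $ \varphi_{ 0 } + \R_{ \geq 0 } \psi $ with $ \psi \neq 0 $, and letting the ray parameter tend to infinity in the defining inequalities forces $ \of{ \psi_{ i } - \psi_{ j } } \of{ \gamma } \geq 0 $ for every infinite-order $ \gamma $ (as $ \norm{ \gamma }_{ \Sigma } > 0 $) and every admissible $ \of{ i , j , k } $, hence $ \psi_{ i } = \psi_{ j } $ after also evaluating on $ \gamma^{ -1 } $; since for each single $ k \in \theta $ the sets $ \set{ i : q_{ i , k } > 0 } $ and $ \set{ j : q_{ j , k } < \dim_{ \K } \of{ U_{ j } } } $ are non-empty and together cover every nontrivial block, the admissible pairs $ \set{ i , j } $ connect all nontrivial blocks, so every coordinate of $ \psi $ agrees and $ \psi \in D_{ \mathscr{ U } } $ forces $ \psi = 0 $.

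It remains to show — and this is the crux — that at most one configuration $ Q $ has $ A^{ Q } \neq \emptyset $; granting this, $ A^{ \mathscr{ U } }_{ \theta } \of{ \zeta } $ is a single set $ A^{ Q } $, which is convex and bounded by the previous paragraph and open by the first. Suppose $ \varphi \in A^{ Q } $, $ \varphi' \in A^{ Q' } $, and $ Q \neq Q' $. Picking $ k_{ 0 } \in \theta $ on which the two families differ and using $ \sum_{ j } q_{ j , k_{ 0 } } = k_{ 0 } = \sum_{ j } q'_{ j , k_{ 0 } } $, there are blocks $ j_{ 0 } , j_{ 1 } $ with $ q_{ j_{ 0 } , k_{ 0 } } < q'_{ j_{ 0 } , k_{ 0 } } $ and $ q_{ j_{ 1 } , k_{ 0 } } > q'_{ j_{ 1 } , k_{ 0 } } $; then $ \of{ j_{ 1 } , j_{ 0 } , k_{ 0 } } $ is admissible for $ Q $ and $ \of{ j_{ 0 } , j_{ 1 } , k_{ 0 } } $ for $ Q' $. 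Adding the two linear-growth inequalities these yield and absorbing the eigenvalue terms via $ \lambda_{ q_{ j_{ 0 } , k_{ 0 } } + 1 } \of{ \zeta_{ j_{ 0 } } \of{ \gamma } } \geq \lambda_{ q'_{ j_{ 0 } , k_{ 0 } } } \of{ \zeta_{ j_{ 0 } } \of{ \gamma } } $ and $ \lambda_{ q'_{ j_{ 1 } , k_{ 0 } } + 1 } \of{ \zeta_{ j_{ 1 } } \of{ \gamma } } \geq \lambda_{ q_{ j_{ 1 } , k_{ 0 } } } \of{ \zeta_{ j_{ 1 } } \of{ \gamma } } $, one finds that the homomorphism $ \rho = \of{ \varphi_{ j_{ 1 } } - \varphi'_{ j_{ 1 } } } - \of{ \varphi_{ j_{ 0 } } - \varphi'_{ j_{ 0 } } } \colon \Gamma \to \R $ satisfies $ \rho \of{ \gamma } \geq c \norm{ \gamma }_{ \Sigma } $ for some $ c > 0 $ and all infinite-order $ \gamma $; but then $ \rho \of{ \gamma^{ -1 } } = - \rho \of{ \gamma } $ violates the same bound, and a non-elementary hyperbolic group has infinite-order elements. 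This $ \gamma \leftrightarrow \gamma^{ -1 } $ rigidity — together with the combinatorial bookkeeping of choosing the right admissible indices and checking connectivity of the configuration graph — is where I expect the real work to lie; the remainder is the half-space formalism already implicit in \autoref{thm:AnosovCharacterization}.
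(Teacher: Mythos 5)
Your proof is correct in substance and takes a genuinely different route from the paper's, so a comparison is in order.

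The paper establishes that the large eigenvalue $\theta$-configuration $Q$ is the same for \emph{every} $P_{\theta}$-Anosov representation of the form $\beta_{\mathscr{U}}(\delta,\varphi,\zeta)$ by a short commutator trick: fixing an infinite-order $\gamma_0 \in [\Gamma,\Gamma]$, every $\delta$ and $\varphi$ vanishes on $\gamma_0$, so $\beta_{\mathscr{U}}(\delta,\varphi,\zeta)(\gamma_0)=\zeta(\gamma_0)$ and hence $q_{j,k}=\dim_{\K}(U_j\cap\zeta(\gamma_0)^+_k)$ is determined by $\zeta$ alone. You reprove this (your "crux") by combining the linear-growth inequalities for two hypothetical configurations and exploiting $\gamma\leftrightarrow\gamma^{-1}$ rigidity, which is valid but longer. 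With $Q$ pinned down, the paper proves convexity by interpolating $\varphi^t$ and appealing to \autoref{lem:convexity} and the explicit minimum formula in \autoref{cor:reducibleProximality}; you instead observe that the growth condition (ii) in \autoref{thm:newEigenvalueConfiguration}, for fixed $Q$, presents $A^{Q}$ as a directed increasing union of intersections of affine half-spaces in $\hom(\Gamma,D_{\mathscr{U}})$, which is tidier and avoids \autoref{lem:convexity} entirely. For boundedness the paper invokes \autoref{cor:halfLargeEigenvalues} to ensure the needed index pair is admissible and then pushes one logarithmic eigenvalue gap negative along the ray; you instead extract $(\psi_i-\psi_j)(\gamma)\ge 0$ for all admissible $(i,j,k)$, upgrade to equality via $\gamma^{-1}$, and close via a connectivity argument on the bipartite graph of admissible pairs, which is also correct (indeed $\{i:q_{i,k}>0\}$ and $\{j:q_{j,k}<\dim_\K U_j\}$ are both nonempty by \autoref{lem:nonEmptyAdmissibleCondition} and together cover every nonzero block, so the complete bipartite graph is connected). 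Your route avoids \autoref{cor:halfLargeEigenvalues} and \autoref{lem:convexity}; the paper's route avoids your uniqueness combinatorics.

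One small logical reordering is needed. You invoke "being convex and open" to put a ray inside $A^Q$ in your boundedness paragraph, but openness of $A^Q$ is only established after the uniqueness argument (which shows $A^Q=A^{\mathscr{U}}_\theta(\zeta)$, hence open by stability). Either move the uniqueness argument first, or note that it suffices to find the ray in $\overline{A^{Q}}$: for $\varphi$ in the closure, the expression $(\varphi_i-\varphi_j)(\gamma)+\log\tfrac{\lambda_{q_{i,k}}(\zeta_i(\gamma))}{\lambda_{q_{j,k}+1}(\zeta_j(\gamma))}$ is still $\geq 0$ for admissible $(i,j,k)$ and infinite-order $\gamma$ by continuity, and $\geq 0$ (rather than $>0$) is all the limiting argument uses.
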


One consequence of \autoref{thm:domain} is a group-theoretic condition on the hyperbolic groups whose character varieties have Anosov components containing reducible representations.

\begin{restatable}{corollary}{teichmuller} \label{cor:teichmuller}

Choose $ G $ and $ H $ from among $ \SL \of{ V } $, $ \SL^{ * } \of{ V } $, and $ \GL \of{ V } $. If the commutator subgroup $ \comm{ \Gamma }{ \Gamma } $ has infinite index in $ \Gamma $, then any connected component of the character variety $ \hom \of{ \Gamma , G } \mathbin{ // } H $ consisting entirely of ($ H $-equivalence classes of) Anosov representations contains no ($ H $-equivalence classes of) reducible representations.

\end{restatable}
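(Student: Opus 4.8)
The plan is to argue by contradiction, producing from a reducible Anosov representation a non-Anosov representation lying in the same connected component of the character variety. First I would translate the hypothesis into a statement about deformations: since $ \Gamma $ is finitely generated, $ \Gamma / \comm{ \Gamma }{ \Gamma } $ is a finitely generated abelian group, and it is infinite precisely when it has positive rank, i.e.\ precisely when $ \hom \of{ \Gamma , \R } \neq 0 $. Thus the hypothesis furnishes a nonzero homomorphism $ \Gamma \to \R $, which is what we will use to deform.

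Now suppose some connected component $ \mathcal{ C } $ of $ \hom \of{ \Gamma , G } \mathbin{ // } H $ consists entirely of Anosov classes yet contains the class of a reducible representation $ \rho \colon \Gamma \to G $. The next step is to reduce to the block diagonal setting. Since $ \rho $ is reducible it preserves some proper nonzero subspace $ W \subsetneq V $; fixing any vector-space complement gives a direct sum decomposition $ \mathscr{ U } = \of{ U_{ 1 } , U_{ 2 } } $ with $ U_{ 1 } = W $ and both factors nonzero, relative to whose associated partial flag $ \rho $ is block upper triangular. Since block upper triangular representations are equivalent in the character variety to block diagonal ones, there is a specific block diagonal representation $ \eta = \beta_{ \mathscr{ U } } \of{ \delta , \varphi , \zeta } \colon \Gamma \to G $ with $ [ \rho ] = [ \eta ] $; in particular $ [ \eta ] \in \mathcal{ C } $ and $ \eta $ is Anosov.

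The heart of the argument is then a boundedness count in the deformation space. Because $ \mathscr{ U } $ has two nonzero factors, $ D_{ \mathscr{ U } } $ is a nonzero vector space (a line in $ \R^{ 2 } $), so $ \hom \of{ \Gamma , D_{ \mathscr{ U } } } \neq 0 $, and I fix a nonzero block deformation $ \psi \in \hom \of{ \Gamma , D_{ \mathscr{ U } } } $. For $ t \in \R $ put $ \eta_{ t } = \beta_{ \mathscr{ U } } \of{ \delta , \varphi + t \psi , \zeta } $. From the defining formula for $ \beta_{ \mathscr{ U } } $ together with the relation $ \sum_{ j }{ \dim_{ \K } \of{ U_{ j } } \psi_{ j } \of{ \gamma } } = 0 $, the determinant $ \det \of{ \eta_{ t } \of{ \gamma } } $ is independent of $ t $; hence each $ \eta_{ t } $ again takes values in $ G $, the assignment $ t \mapsto \eta_{ t } $ is continuous, and the continuous image $ \set{ [ \eta_{ t } ] : t \in \R } $ of $ \R $ in the character variety lies entirely in the connected component $ \mathcal{ C } $. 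Therefore every $ \eta_{ t } $ is Anosov, i.e.\ $ P_{ \theta } $-Anosov for some nonempty $ \theta \subseteq \Delta_{ V } $. But there are only finitely many such subsets $ \theta $, and by \autoref{thm:domain} each $ A^{ \mathscr{ U } }_{ \theta } \of{ \zeta } $ is a bounded subset of the finite-dimensional space $ \hom \of{ \Gamma , D_{ \mathscr{ U } } } $. Since $ \norm{ \varphi + t \psi } \to \infty $ as $ \abs{ t } \to \infty $, for $ \abs{ t } $ large enough $ \varphi + t \psi $ avoids all of these finitely many bounded sets, so by the definition of $ A^{ \mathscr{ U } }_{ \theta } \of{ \zeta } $ the representation $ \eta_{ t } $ is not $ P_{ \theta } $-Anosov for any $ \theta $, i.e.\ not Anosov --- contradicting $ [ \eta_{ t } ] \in \mathcal{ C } $.

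I expect the main obstacle to lie in the reduction step rather than the final count: one must check that semisimplifying a reducible representation into $ G $ yields a block diagonal representation that still takes values in $ G $ and realizes the same point of the $ H $-quotient, that this representation genuinely has the form $ \beta_{ \mathscr{ U } } \of{ \delta , \varphi , \zeta } $ with $ \varphi \in \hom \of{ \Gamma , D_{ \mathscr{ U } } } $, and that perturbing $ \varphi $ within $ \hom \of{ \Gamma , D_{ \mathscr{ U } } } $ leaves the representation inside $ G $ and inside the connected component $ \mathcal{ C } $. Once these bookkeeping points are in place, the contradiction is immediate from the boundedness in \autoref{thm:domain} together with the finiteness of the set of parabolic types $ \theta \subseteq \Delta_{ V } $.
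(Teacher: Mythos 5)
Your proposal is correct and follows essentially the same route as the paper: reduce the reducible representation to a block-diagonal form $\beta_{\mathscr{U}}(\delta,\varphi,\zeta)$ via the semisimplification, use the hypothesis on $\comm{\Gamma}{\Gamma}$ to produce a nonzero block deformation $\psi \in \hom(\Gamma, D_{\mathscr{U}})$, and then invoke the boundedness in \autoref{thm:domain} (together with finiteness of the set of subsets $\theta \subseteq \Delta_V$) to find a deformation outside $\bigcup_\theta A^{\mathscr{U}}_\theta(\zeta)$ that still lies in the same connected component, hence is non-Anosov. The paper phrases this by contraposition with $A^{\mathscr{U}}(\zeta) = \bigcup_\theta A^{\mathscr{U}}_\theta(\zeta)$ and notes it is a bounded proper subset of the nonzero space $\hom(\Gamma, D_{\mathscr{U}})$; your ray argument $t \mapsto \varphi + t\psi$ for $t \to \infty$ is the same observation. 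The "bookkeeping" concern you flag at the end --- that the semisimplification lands in $G$ and represents the same point of the $H$-quotient --- is handled in the paper by \autoref{cor:blockDiagonalization}, which shows the block diagonalization is $\SL(V)$-equivalent to $\rho$ (and $\SL(V)$ is contained in each of the allowed $H$), so this gap closes cleanly.
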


We note that if a block normalized representation $ \zeta \colon \Gamma \to N_{ \mathscr{ U } } $ is discrete and faithful (resp. has finite kernel), then all deformations of the form $ \beta_{ \mathscr{ U } } \of{ \delta , \varphi , \zeta } $ for some homomorphisms $ \delta \colon \Gamma \to \R $ and $ \varphi \colon \Gamma \to D_{ \mathscr{ U } } $ are also discrete and faithful (resp. have finite kernel). Thus \autoref{thm:domain} and \autoref{cor:teichmuller} do not imply that all connected components of the character variety consisting of equivalence classes of discrete and faithful representations contain no equivalence classes of reducible representations.

\subsection*{Acknowledgments}

The author gratefully acknowledges the support of NSF grant DMS-2304636, and the guidance, feedback, and support of Dick Canary and Konstantinos Tsouvalas.

\section{Background, Notation, and Small Lemmas} \label{sec:background}

\subsection{Representations and Character Varieties}

Fix forever a non-elementary word hyperbolic group $ \Gamma $ and a field $ \K $, either $ \R $ or $ \C $. A \emph{representation} of $ \Gamma $ in a group $ G \leq \GL \of{ V } $ of linear automorphisms of a finite-dimensional vector space $ V $ over $ \K $ is a homomorphism $ \Gamma \to G $. The set $ \hom \of{ \Gamma , G } $ of such representations is topologized by the compact open topology, which is also the topology inherited as a subset of $ G^{ \Sigma } $ for any finite generating set $ \Sigma $ for $ \Gamma $. Endowed with this topology, $ \hom \of{ \Gamma , G } $ is called the \emph{representation variety} of $ \Gamma $ in $ G $. Such a representation variety $ \hom \of{ \Gamma , G } $ comes equipped with a left action of the normalizer
\[
N_{ \GL \of{ V } } \of{ G } = \set{ \vect{ A } \in \GL \of{ V } : \vect{ A } G \vect{ A }^{ -1 } = G } = \set{ \vect{ A } \in \GL \of{ V } : \vect{ A } \vect{ B } \vect{ A }^{ -1 } \in G \textrm{ for all } \vect{ B } \in G }
\]
by post-composition with conjugation; for a linear automorphism $ \vect{ A } \in N_{ \GL \of{ V } } \of{ G } $ and a representation $ \rho \colon \Gamma \to G $, $ \vect{ A } \cdot \rho \colon \Gamma \to G $ is the representation of $ \Gamma $ in $ G $ defined by the formula $ \of{ \vect{ A } \cdot \rho } \of{ \gamma } = \vect{ A } \rho \of{ \gamma } \vect{ A }^{ -1 } $. Although this action is by homeomorphisms, the associated quotient of $ \hom \of{ \Gamma , G } $ typically has some topological pathologies. For example, since orbits are generally not closed, this quotient is generally not Hausdorff. However, a suitably nice quotient may be found by considering the following coarser equivalence relation.

\begin{definition*}[Equivalence of representations; character variety]

Fix a group $ G \leq \GL \of{ V } $ of linear automorphisms of a finite-dimensional vector space $ V $ over $ \K $ and a subgroup $ H \leq N_{ \GL \of{ V } } \of{ G } $. Two representations $ \rho_{ 1 } , \rho_{ 2 } \colon \Gamma \to G $ are called \emph{$ H $-equivalent} if the closures of their orbits under the the restriction of the above action to an action of $ H $ on $ \hom \of{ \Gamma , G } $ intersect, in which case we write $ \rho_{ 1 } \sim_{ H } \rho_{ 2 } $.

$ H $-equivalence $ \sim_{ H } $ is an equivalence relation on the representation variety $ \hom \of{ \Gamma , G } $, whose quotient is called a \emph{character variety} and is denoted $ \Char_{ H } \of{ \Gamma , G } = \hom \of{ \Gamma , G } / { \sim_{ H } } = \hom \of{ \Gamma , G } \mathbin{ // } H $. The associated quotient map is denoted $ \class{ \oparg }_{ H } \colon \hom \of{ \Gamma , G } \onto \Char_{ H } \of{ \Gamma , G } $.

\end{definition*}

We note that for groups $ K \leq G \leq \GL \of{ V } $ of linear automorphisms of a finite-dimensional vector space $ V $ over $ \K $ and a subgroup $ H \leq N_{ \GL \of{ V } } \of{ G } \cap N_{ \GL \of{ V } } \of{ K } $, there is a unique injective and continuous map $ \Char_{ H } \of{ \Gamma , K } \into \Char_{ H } \of{ \Gamma , G } $ so that the following diagram commutes:
\[
\begin{tikzcd}
\hom \of{ \Gamma , K } \ar[ r , hook ] \ar[ d , two heads , " \class{ \oparg }_{ H } " swap ] & \hom \of{ \Gamma , G } \ar[ d , two heads , " \class{ \oparg }_{ H } " ] \\
\Char_{ H } \of{ \Gamma , K } \ar[ r , hook ] & \Char_{ H } \of{ \Gamma , G }
\end{tikzcd}
\]
We will use these maps to pretend that $ \Char_{ H } \of{ \Gamma , K } \subseteq \Char_{ H } \of{ \Gamma , G } $ as subsets of $ \Char_{ H } \of{ \Gamma , \GL \of{ V } } $.

For each finite-dimensional vector space $ V $ over $ \K $, denote by
\[
\SL^{ * } \of{ V } = \set{ \vect{ A } \in \GL \of{ V } : \abs{ \det \of{ \vect{ A } } } = 1 }
\]
the group of linear automorphisms of unit magnitude determinant.

\begin{lemma+}

Fix a normal group $ G \nsleq \GL \of{ V } $ of linear automorphisms of a finite-dimensional vector space $ V $ over $ \K $. Then $ \sim_{ \GL \of{ V } } = \sim_{ \SL^{ * } \of{ V } } $ and $ \sim_{ \GL^{ + } \of{ V } } = \sim_{ \SL \of{ V } } $ on $ \hom \of{ \Gamma , G } $, so that $ \Char_{ \GL \of{ V } } \of{ \Gamma , G } = \Char_{ \SL^{ * } \of{ V } } \of{ \Gamma , G } $ and $ \Char_{ \GL^{ + } \of{ V } } \of{ \Gamma , G } = \Char_{ \SL \of{ V } } \of{ \Gamma , G } $.

\begin{proof}

Note that $ \GL \of{ V } = \GL^{ + } \of{ V } = \SL^{ * } \of{ V } = \SL \of{ V } $ if $ \dim_{ \K } \of{ V } = 0 $, so we may assume that $ \dim_{ \K } \of{ V } > 0 $. Consider the homomorphism $ N_{ V } \colon \GL \of{ V } \onto \SL^{ * } \of{ V } $ defined by the formula
\[
N_{ V } \of{ \vect{ A } } = \frac{ \vect{ A } }{ \abs{ \det \of{ \vect{ A } } }^{ \frac{ 1 }{ \dim_{ \K } \of{ V } } } } .
\]
Note that $ N_{ V } $ is a continuous right inverse to the inclusion $ \SL^{ * } \of{ V } \into \GL \of{ V } $, and that the restriction of $ N_{ V } $ to $ \GL^{ + } \of{ V } $ is a continuous right inverse to the inclusion $ \SL \of{ V } \into \GL^{ + } \of{ V } $. Moreover,
\[
\of{ N_{ V } \of{ \vect{ A } } \cdot \rho } \of{ \gamma } = \of{ \frac{ \vect{ A } }{ \abs{ \det \of{ \vect{ A } }^{ \frac{ 1 }{ \dim_{ \K } \of{ V } } } } } } \rho \of{ \gamma } \of{ \abs{ \det \of{ \vect{ A } } }^{ \frac{ 1 }{ \dim_{ \K } \of{ V } } } \vect{ A }^{ -1 } } = \vect{ A } \rho \of{ \gamma } \vect{ A }^{ -1 } = \of{ \vect{ A } \cdot \rho } \of{ \gamma }
\]
for all linear automorphisms $ \vect{ A } \in \GL \of{ V } $, representations $ \rho \colon \Gamma \to G $, and $ \gamma \in \Gamma $, so that $ N_{ V } \of{ \vect{ A } } \cdot \rho = \vect{ A } \cdot \rho $. We will use this fact in the following argument that two representations $ \rho , \eta \colon \Gamma \to G $ are $ \GL \of{ V } $-equivalent if and only if they are $ \SL^{ * } \of{ V } $-equivalent.

Since $ \SL^{ * } \of{ V } \subseteq \GL \of{ V } $, if $ \rho \sim_{ \SL^{ * } \of{ V } } \eta $, then there are sequences $ \of{ \vect{ A }_{ n } }_{ n = 0 }^{ \infty } $ and $ \of{ \vect{ B }_{ n } }_{ n = 0 }^{ \infty } $ of linear automorphisms $ \vect{ A }_{ n } , \vect{ B }_{ n } \in \SL^{ * } \of{ V } $ so that
\[
\lim_{ n \to \infty }{ \vect{ A }_{ n } \cdot \rho } = \lim_{ n \to \infty }{ \vect{ B }_{ n } \cdot \eta } .
\]
Observing that $ \vect{ A }_{ n } , \vect{ B }_{ n } \in \GL \of{ V } $ for all $ n \in \N $, we conclude that $ \rho \sim_{ \GL \of{ V } } \eta $.

On the other hand, if $ \rho \sim_{ \SL^{ * } \of{ V } } \eta $, then there are sequences $ \of{ \vect{ A }_{ n } }_{ n = 0 }^{ \infty } $ and $ \of{ \vect{ B }_{ n } }_{ n = 0 }^{ \infty } $ of linear automorphisms $ \vect{ A }_{ n } , \vect{ B }_{ n } \in \GL \of{ V } $ so that the above equation of limits holds. But of course
\[
\lim_{ n \to \infty }{ N_{ V } \of{ \vect{ A }_{ n } } \cdot \rho } = \lim_{ n \to \infty }{ \vect{ A }_{ n } \cdot \rho } = \lim_{ n \to \infty }{ \vect{ B }_{ n } \cdot \eta } = \lim_{ n \to \infty }{ N_{ V } \of{ \vect{ B }_{ n } } \cdot \eta } .
\]
Observing that $ N_{ V } \of{ \vect{ A }_{ n } } , N_{ V } \of{ \vect{ B }_{ n } } \in \SL^{ * } \of{ V } $ for all $ n \in \N $, we conclude that $ \rho \sim_{ \SL^{ * } \of{ V } } \eta $.

We have shown that $ \sim_{ \GL \of{ V } } = \sim_{ \SL^{ * } \of{ V } } $. Since we may replace $ \GL \of{ V } $ and $ \SL^{ * } \of{ V } $ in the above argument with $ \GL^{ + } \of{ V } $ and $ \SL \of{ V } $, respectively, we also conclude that $ \sim_{ \GL^{ + } \of{ V } } = \sim_{ \SL \of{ V } } $. \qedhere

\end{proof}

\end{lemma+}

Two equivalent representations share many of the same dynamical properties. For example, we will soon see that one is Anosov if and only if the other is. Towards that goal, to each linear transformation $ \vect{ A } \colon V \to V $ of a finite-dimensional vector space $ V $ over $ \K $ are associated $ \dim_{ \K } \of{ V } $ \emph{eigenvalue magnitudes}
\[
\lambda_{ 1 } \of{ \vect{ A } } \geq \lambda_{ 2 } \of{ \vect{ A } } \geq \dotsb \geq \lambda_{ \dim_{ \K } \of{ V } } \of{ \vect{ A } } ,
\]
which are the magnitudes of the complex roots of the characteristic polynomial of $ \vect{ A } $, counted with algebraic multiplicity and listed in non-decreasing order. If $ \K = \C $, then each of these roots has a generalized eigenvector in $ V $. However, if $ \K = \R $, then any non-real complex eigenvalues will only have generalized eigenvectors in the complexification $ V \otimes_{ \R } \C $ of $ V $.

One can check that the eigenvalue magnitudes define continuous and conjugation-invariant functions $ \lambda_{ 1 } , \dotsc , \lambda_{ \dim_{ \K } \of{ V } } \colon \End_{ \K } \of{ V } \to \clopint{ 0 }{ \infty } $ on the space $ \End_{ \K } \of{ V } $ of linear transformations of $ V $. From this, one can derive the following fact:

\begin{fact+} \label{fact:equivalenceEigenvalues}

Fix a group $ G \leq \GL \of{ V } $ of linear automorphisms of a finite-dimensional vector space $ V $ over $ \K $ and a subgroup $ H $ of the normalizer $ N_{ \GL \of{ V } } \of{ G } $, and consider two representations $ \rho , \eta \colon \Gamma \to G $ of $ \Gamma $ in $ G $. If $ \rho \sim_{ H } \eta $, then $ \lambda_{ k } \circ \rho = \lambda_{ k } \circ \eta $ for all $ 1 \leq k \leq \dim_{ \K } \of{ V } $.

\end{fact+}

\subsection{Eigenvalues and Anosov Representations}

The framework of Anosov representations provides a higher rank generalization of convex cocompact subgroups of rank $ 1 $ Lie groups which retains many of the most desirable dynamical properties. The Anosov condition for representations in $ \SL_{ d } \of{ \R } $ was first introduced by Labourie \cite[Section~3.1.2]{Lab06}, and has since been successfully generalized by Guichard--Wienhard \cite[Definition~2.10]{GW12} to representations in semisimple and reductive Lie groups.

Given two non-negative real-valued functions $ f , g \colon \Gamma \to \clopint{ 0 }{ \infty } $ on $ \Gamma $, we will say that $ f \of{ \gamma } $ \emph{grows at least linearly} in $ g \of{ \gamma } $ if there are constants $ a > 0 $ and $ b \geq 0 $ so that $ f \of{ \gamma } \geq a g \of{ \gamma } - b $ for all $ \gamma \in \Gamma $. This terminology is central to the fields of coarse geometry and geometric group theory, and plays a key role in multiple equivalent characterizations of Anosov representations. Specifically, an important characterization of the Anosov condition equivalent to Labourie's \cite[Section~3.1.2]{Lab06} but phrased in the above language of at least linear growth of eigenvalue magnitudes is due to Kassel--Potrie \cite[Corollary~4.6]{KP22}. In order to present this characterization, we will require the following additional terminology.

We recall that $ \Gamma $ is assumed to be finitely generated. To each finite generating set $ \Sigma $ for $ \Gamma $ is associated a \emph{translation length} function $ \norm{ \oparg }_{ \Sigma } \colon \Gamma \to \N $ defined by the formula
\[
\norm{ \gamma }_{ \Sigma } = \min \of{ \set{ n \in \N : \delta \gamma \delta^{ -1 } = \sigma_{ 1 } \dotsm \sigma_{ n } \textrm{ for some } \delta \in \Gamma \textrm{ and } \sigma_{ 1 } , \dotsc , \sigma_{ n } \in \Sigma \cup \Sigma^{ -1 } } } .
\]
Defined as such, $ \norm{ \gamma }_{ \Sigma } $ is the translation length of the action of $ \gamma $ by left multiplication on $ \Gamma $, which is assumed to be equipped with the word metric associated to $ \Sigma $.

For each finite-dimensional vector space $ V $ over a field $ \K $, either $ \R $ or $ \C $, we will denote
\[
\Delta_{ V } = \set{ 1 , 2 , \dotsc , \dim_{ \K } \of{ V } - 1 } .
\]
This set $ \Delta_{ V } $ has an \emph{opposition involution} $ \iota_{ V } \colon \Delta_{ V } \to \Delta_{ V } $ defined by the formula $ \iota_{ V } \of{ k } = \dim_{ \K } \of{ V } - k $.

\begin{definition*}[{Anosov representation, \cite[Corollary~4.6]{KP22}}]

A representation $ \rho \colon \Gamma \to \GL \of{ V } $ of $ \Gamma $ is called \emph{$ P_{ \theta } $-Anosov} for some subset $ \theta \subseteq \Delta_{ V } $ if for each $ k \in \theta $, the $ k $th \emph{logarithmic eigenvalue gap} $ \log \of{ \frac{ \lambda_{ k } \of{ \rho \of{ \gamma } } }{ \lambda_{ k + 1 } \of{ \rho \of{ \gamma } } } } $ grows at least linearly in some (equivalently, every) translation length $ \norm{ \gamma }_{ \Sigma } $; that is, $ \rho $ is $ P_{ \theta } $-Anosov if for each $ k \in \theta $ there are constants $ a_{ k } > 0 $ and $ b_{ k } \geq 0 $ so that
\[
\log \of{ \frac{ \lambda_{ k } \of{ \rho \of{ \gamma } } }{ \lambda_{ k + 1 } \of{ \rho \of{ \gamma } } } } \geq a_{ k } \norm{ \gamma }_{ \Sigma } - b_{ k }
\]
for all $ \gamma \in \Gamma $.

A representation $ \rho \colon \Gamma \to \GL \of{ V } $ is called \emph{Anosov} if it is $ P_{ \theta } $-Anosov for some non-empty subset $ \emptyset \subsetneq \theta \subseteq \Delta_{ V } $, and $ \rho $ is called \emph{Borel Anosov} if it is $ P_{ \Delta_{ V } } $-Anosov.

\end{definition*}

We have chosen to explicitly allow the possibility that $ \theta = \emptyset $ in the above definition in order to simplify several statements of results, most notably in (2) in \autoref{thm:eigenvalueConfiguration}. However, we remark that the $ P_{ \emptyset } $-Anosov condition is vacuous; that is, every representation $ \Gamma \to \GL \of{ V } $ is $ P_{ \emptyset } $-Anosov according to the above definition.

Given a finite-dimensional vector space $ V $ over $ \K $, we observe that
\[
\lambda_{ k } \of{ \vect{ A }^{ -1 } } = \frac{ 1 }{ \lambda_{ \dim_{ \K } \of{ V } - k + 1 } \of{ \vect{ A } } }
\]
for all $ 1 \leq k \leq \dim_{ \K } \of{ V } $ and $ \vect{ A } \in \GL \of{ V } $. Moreover, given a finite generating set $ \Sigma $ for $ \Gamma $, we note that $ \norm{ \gamma^{ -1 } }_{ \Sigma } = \norm{ \gamma }_{ \Sigma } $ for all $ \gamma \in \Gamma $. From these observations, one can derive the following facts:

\begin{fact+} \label{fact:AnosovInvolution}

Fix a representation $ \rho \colon \Gamma \to \GL \of{ V } $ of $ \Gamma $ and a subset $ \theta \subseteq \Delta_{ V } $.
\begin{enumerate}

\item[(1)]

$ \rho $ is $ P_{ \theta } $-Anosov if and only if it is $ P_{ \set{ k } } $-Anosov for all $ k \in \theta $.

\item[(2)]

$ \rho $ is $ P_{ \theta } $-Anosov if and only if it is $ P_{ \iota_{ V } \of{ \theta } } $-Anosov.

\end{enumerate}

\end{fact+}

One consequence of \autoref{fact:AnosovInvolution} is that for the definition of Anosov representations, we could equivalently have only considered singleton subsets of the form $ \theta = \set{ k } $ for a choice of integer $ 1 \leq k \leq \frac{ \dim_{ \K } \of{ V } }{ 2 } $. We will follow popular convention for such singletons and write $ P_{ k } $ instead of $ P_{ \set{ k } } $ both for the Anosov condition defined above and the proximality definition to come.

Note that \autoref{fact:equivalenceEigenvalues} implies that if a representation is Anosov, then so is every equivalent representation. Thus we may also refer to Anosov equivalence classes in character varieties. One of the most useful properties of the Anosov condition is that it is stable, both for representations and for their equivalence classes in the character variety.

\begin{theorem+}[{\cite[Theorem~1.2]{GW12}}] \label{thm:stability}

Fix a group $ G \leq \GL \of{ V } $ of linear automorphisms of a finite-dimensional vector space $ V $ over $ \K $ and a subgroup $ H $ of the normalizer $ N_{ \GL \of{ V } } \of{ G } $. For any subset $ \theta \subseteq \Delta_{ V } $, the set of $ P_{ \theta } $-Anosov representations of $ \Gamma $ in $ G $ is open in the representation variety $ \hom \of{ \Gamma , G } $ and descends to an open subset of the character variety $ \Char_{ H } \of{ \Gamma , G } $.

\end{theorem+}

\subsection{Flags and Proximality}

A family $ F = \of{ F_{ k } }_{ k \in \theta } $ of proper, non-zero subspaces $ \set{ \vect{ 0 } } \subsetneq F_{ k } \subsetneq V $ of a finite-dimensional vector space $ V $ over $ \K $ indexed by a subset $ \theta \subseteq \Delta_{ V } $ is a \emph{flag} in $ V $ of \emph{signature} $ \theta $ if it satisfies the following two conditions:
\begin{enumerate}

\item[(i)]

$ \dim_{ \K } \of{ F_{ k } } = k $ for all $ k \in \theta $; and

\item[(ii)]

$ F_{ k } \subsetneq F_{ \ell } $ for all $ k , \ell \in \theta $ so that $ k < \ell $.

\end{enumerate}
Such a flag $ F $ is called \emph{trivial} if $ \theta = \emptyset $, \emph{non-trivial} otherwise if $ \theta \neq \emptyset $, \emph{full} if $ \theta = \Delta_{ V } $, and \emph{partial} otherwise if $ \theta \subsetneq \Delta_{ V } $. Given a subset $ \theta \subseteq \Delta_{ V } $, the space $ \Flag_{ \theta } \of{ V } $ of flags in $ V $ of signature $ \theta $ is a smooth manifold called a \emph{flag variety}.

A linear transformation $ \vect{ A } \colon V \to V $ of a finite-dimensional vector space $ V $ is said to \emph{preserve} a flag $ F = \of{ F_{ k } }_{ k \in \theta } $ in $ V $ of signature some subset $ \theta \subseteq \Delta_{ V } $ if it preserves each subspace; that is, $ A $ preserves $ F $ if $ A \of{ F_{ k } } \subseteq F_{ k } $ for all $ k \in \theta $. More generally, if $ A \colon V \to V $ is a linear automorphism of $ V $, then $ \vect{ A } \cdot F = \of{ \vect{ A } \of{ F_{ k } } }_{ k \in \theta } $ is also a flag in $ V $ of signature $ \theta $. We thus obtain an action of the general linear group $ \GL \of{ V } $ on the flag variety $ \Flag_{ \theta } \of{ V } $ which is both smooth and transitive. The stabilizer $ \stabz_{ \GL \of{ V } } \of{ F } $ under this action consists of those linear automorphisms which preserve $ F $, and we note that
\[
\Flag_{ \theta } \of{ V } \cong \lfaktor{ \stabz \of{ F } }{ \GL \of{ V } } .
\]

A representation $ \rho \colon \Gamma \to \GL \of{ V } $ is said to \emph{preserve} a flag $ F $ in $ V $ if $ \rho \of{ \gamma } $ preserves $ F $ for each $ \gamma \in \Gamma $, or equivalently if $ \rho \of{ \Gamma } \subseteq \stabz_{ \GL \of{ V } } \of{ F } $. A representation $ \rho \colon \Gamma \to \GL \of{ V } $ is called \emph{reducible} if it preserves a non-trivial flag in $ V $ and \emph{irreducible} otherwise.

For an integer $ 0 < k \leq \dim_{ \K } \of{ V } $, the \emph{attracting $ k $-subspace} $ \vect{ A }^{ + }_{ k } $ (resp. \emph{repelling $ k $-subspace} $ \vect{ A }^{ - }_{ k } $) of a linear transformation $ \vect{ A } \colon V \to V $ is the subspace of $ V $ spanned by the generalized eigenspaces\footnote{If $ \K = \R $, then for a given eigenvalue, we instead take the intersection of $ V = V \otimes_{ \R } \R $ with the corresponding generalized eigenspace for the linear transformation $ \vect{ A } \otimes \id_{ \C } $ induced by $ \vect{ A } $ on the complexification $ V \otimes_{ \R } \C $ of $ V $.} of eigenvalues of magnitude at least $ \lambda_{ k } \of{ \vect{ A } } $ (resp. at most $ \lambda_{ \iota_{ V } \of{ k } + 1 } \of{ \vect{ A } } $). We also set $ \vect{ A }^{ + }_{ 0 } = \vect{ A }^{ - }_{ 0 } = \set{ \vect{ 0 } } $. For any such integer $ 0 \leq k \leq \dim_{ \K } \of{ V } $ and linear transformation $ \vect{ A } \colon V \to V $, the following are true:
\begin{align*}
\dim_{ \K } \of{ \vect{ A }^{ + }_{ k } } & \geq k & \dim_{ \K } \of{ \vect{ A }^{ - }_{ k } } & \geq k & V & = \lspan_{ \K } \of{ \vect{ A }^{ + }_{ k } , \vect{ A }^{ - }_{ \iota_{ V } \of{ k } } } .
\end{align*}
Moreover, these subspaces are non-decreasing in $ k $; that is,
\begin{align*}
\set{ \vect{ 0 } } = \vect{ A }^{ \pm }_{ 0 } \subsetneq \vect{ A }^{ \pm }_{ 1 } \subseteq \dotsb \subseteq \vect{ A }^{ \pm }_{ \dim_{ \K } \of{ V } - 1 } \subseteq \vect{ A }^{ \pm }_{ \dim_{ \K } \of{ V } } = V .
\end{align*}

\begin{fact+} \label{fact:proximalityCharacterizations}

Fix an integer $ k \in \Delta_{ V } $. For any linear transformation $ \vect{ A } \colon V \to V $, $ \lambda_{ k } \of{ \vect{ A } } > \lambda_{ k + 1 } \of{ \vect{ A } } $ if and only if any (and therefore all) of the following hold:
\begin{multicols}{3}
\begin{enumerate}

\item[(1)]

$ \dim_{ \K } \of{ \vect{ A }^{ + }_{ k } } = k $;

\item[(2)]

$ \dim_{ \K } \of{ \vect{ A }^{ - }_{ \iota_{ V } \of{ k } } } = \iota_{ V } \of{ k } $;

\item[(3)]

$ \vect{ A }^{ + }_{ k } \cap \vect{ A }^{ - }_{ \iota_{ V } \of{ k } } = \set{ \vect{ 0 } } $;

\item[(4)]

$ V = \vect{ A }^{ + }_{ k } \oplus \vect{ A }^{ - }_{ \iota_{ V } \of{ k } } $;

\item[(5)]

$ \vect{ A }^{ + }_{ k } \subsetneq \vect{ A }^{ + }_{ k + 1 } $; or

\item[(6)]

$ \vect{ A }^{ - }_{ \iota_{ V } \of{ k } } \subsetneq \vect{ A }^{ - }_{ \iota_{ V } \of{ k } + 1 } $.

\end{enumerate}
\end{multicols}

\end{fact+}

\begin{definition*}[Attracting/repelling flag; proximality]

A linear transformation $ \vect{ A } \colon V \to V $ of a finite-dimensional vector space $ V $ over $ \K $ is called $ P_{ \theta } $-\emph{proximal} for some subset $ \theta \subseteq \Delta_{ V } $ if $ \lambda_{ k } \of{ \vect{ A } } > \lambda_{ k + 1 } \of{ \vect{ A } } $ for all $ k \in \theta $. In this case, the finite sequences $ \vect{ A }^{ + }_{ \theta } = \of{ \vect{ A }^{ + }_{ k } }_{ k \in \theta } $ and $ \vect{ A }^{ - }_{ \iota_{ V } \of{ \theta } } = \of{ \vect{ A }^{ - }_{ k } }_{ k \in \iota \of{ \theta } } $ are transverse flags in $ V $ of signature $ \theta $ and $ \iota_{ V } \of{ \theta } $, respectively. These flags $ \vect{ A }^{ + }_{ \theta } $ and $ \vect{ A }^{ - }_{ \iota_{ V } \of{ \theta } } $ are called the \emph{attracting $ \theta $-flag} and \emph{repelling $ \iota_{ V } \of{ \theta } $-flag} of $ \vect{ A } $ because they are the unique attracting fixed point and repelling fixed point for the forward action of $ \vect{ A } $ on the flag varieties $ \Flag_{ \theta } \of{ V } $ and $ \Flag_{ \iota_{ V } \of{ \theta } } \of{ V } $, respectively.

\end{definition*}

\begin{fact+} \label{fact:biproximality}

For a linear automorphism $ \vect{ A } \colon V \to V $ of a finite-dimensional vector space $ V $ over $ \K $ and a subset $ \theta \subseteq \Delta_{ V } $, the following are equivalent:
\begin{multicols}{2}
\begin{enumerate}

\item[(1)]

$ \vect{ A } $ is both $ P_{ \theta } $- and $ P_{ \iota_{ V } \of{ \theta } } $-proximal;

\item[(3)]

$ \vect{ A } $ and $ \vect{ A }^{ -1 } $ are $ P_{ \theta } $-proximal; and

\item[(2)]

$ \vect{ A }^{ -1 } $ is both $ P_{ \theta } $- and $ P_{ \iota_{ V } \of{ \theta } } $-proximal;

\item[(4)]

$ \vect{ A } $ and $ \vect{ A }^{ -1 } $ are $ P_{ \iota_{ V } \of{ \theta } } $-proximal.

\end{enumerate}
\end{multicols}
A linear automorphism $ \vect{ A } \colon V \to V $ which satisfies any (and therefore all) of the above conditions is called $ P_{ \theta } $-\emph{biproximal}. In this case,
\begin{align*}
\of{ \vect{ A }^{ -1 } }^{ + }_{ \theta } & = \vect{ A }^{ - }_{ \theta } & \of{ \vect{ A }^{ -1 } }^{ - }_{ \theta } & = \vect{ A }^{ + }_{ \theta } & \of{ \vect{ A }^{ -1 } }^{ + }_{ \iota_{ V } \of{ \theta } } & = \vect{ A }^{ - }_{ \iota_{ V } \of{ \theta } } & \of{ \vect{ A }^{ -1 } }^{ - }_{ \iota_{ V } \of{ \theta } } & = \vect{ A }^{ + }_{ \iota_{ V } \of{ \theta } } .
\end{align*}

\end{fact+}

A representation $ \rho \colon \Gamma \to \GL \of{ V } $ is called $ P_{ \theta } $-\emph{proximal} if the image $ \rho \of{ \gamma } $ of any infinite order $ \gamma \in \Gamma $ is $ P_{ \theta } $-proximal. In particular, since this also holds for $ \gamma^{ -1 } $, any $ P_{ \theta } $-proximal representation has the stronger property that the image of any infinite order element is $ P_{ \theta } $-biproximal. Anosov representations are our primary example of proximal representations. In fact, something much stronger is true: the closure of the set of attracting flags of the images of infinite order elements under an Anosov representation of $ \Gamma $ is homeomorphic to its Gromov boundary $ \partial \Gamma $.

\begin{theorem+}[{\cite[Corollary~4.6]{KP22}}]

Fix a representation $ \rho \colon \Gamma \to \GL \of{ V } $ of $ \Gamma $ and a subset $ \theta \subseteq \Delta_{ V } $. If $ \rho $ is $ P_{ \theta } $-Anosov, then there are continuous and $ \rho $-equivariant maps $ \xi_{ \rho }^{ \theta } \colon \partial \Gamma \to \Flag_{ \theta } \of{ V } $ and $ \xi_{ \rho }^{ \iota_{ V } \of{ \theta } } \colon \partial \Gamma \to \Flag_{ \iota_{ V } \of{ \theta } } \of{ V } $ defined on the Gromov boundary $ \partial \Gamma $ with the following properties:
\begin{enumerate}

\item[(1)]

Transversality: For all $ x, y \in \partial \Gamma $, if $ x \neq y $, then $ \xi_{ \rho }^{ \theta } \of{ x } $ and $ \xi_{ \rho }^{ \iota_{ V } \of{ \theta } } \of{ y } $ are transverse; that is, if $ \xi_{ \rho }^{ \theta } \of{ x } = \of{ F_{ k } }_{ k \in \theta } $ and $ \xi_{ \rho }^{ \iota \of{ \theta } } = \of{ G_{ \ell } }_{ \ell \in \iota_{ V } \of{ \theta } } $, then $ V = F_{ k } \oplus G_{ \iota_{ V } \of{ k } } $ for all $ k \in \theta $.

\item[(2)]

$ P_{ \theta } $-dynamics preservation: $ \rho $ is $ P_{ \theta } $-proximal, and for all infinite order $ \gamma \in \Gamma $,
\begin{align*}
\xi_{ \rho }^{ \theta } \of{ \gamma^{ + } } & = \rho \of{ \gamma }^{ + }_{ \theta } & \xi_{ \rho }^{ \theta } \of{ \gamma^{ - } } & = \rho \of{ \gamma }^{ - }_{ \theta } & \xi_{ \rho }^{ \iota_{ V } \of{ \theta } } \of{ \gamma^{ + } } & = \rho \of{ \gamma }^{ + }_{ \iota_{ V } \of{ \theta } } & \xi_{ \rho }^{ \iota_{ V } \of{ \theta } } \of{ \gamma^{ - } } & = \rho \of{ \gamma }^{ - }_{ \iota_{ V } \of{ \theta } } ,
\end{align*}
where $ \gamma^{ \pm } \in \partial \Gamma $ are the attracting and repelling ideal fixed points of $ \gamma $ in $ \partial \Gamma $.

\end{enumerate}
These maps are the unique continuous and $ \rho $-equivariant maps with the above properties, and are called the \emph{$ P_{ \theta } $-Anosov limit maps} of $ \rho $.

\end{theorem+}

\subsection{Direct Sum Decompositions and Block Diagonal Representations}

A finite sequence $ \mathscr{ U } = \of{ U_{ j } }_{ j = 1 }^{ m } $ of subspaces $ U_{ j } \subseteq V $, called \emph{factors}, of a finite-dimensional vector space $ V $ over $ \K $ is called a \emph{direct sum decomposition} of $ V $ if $ V = \lspan_{ \K } \of{ U_{ 1 } , \dotsc , U_{ m } } $ and $ U_{ i } \cap U_{ j } = \set{ \vect{ 0 } } $ for all $ 0 \leq i < j \leq m $. In this case, we write $ V = \bigoplus_{ j = 1 }^{ m }{ U_{ j } } $ and note that $ \dim_{ \K } \of{ V } = \sum_{ j = 1 }^{ m }{ \dim_{ \K } \of{ U_{ j } } } $. Each direct sum decomposition $ \mathscr{ U } = \of{ U_{ j } }_{ j = 1 }^{ m } $ of $ V $ has an associated flag in $ V $ of signature
\[
\theta = \set{ \sum_{ i = 1 }^{ j }{ \dim_{ \K } \of{ U_{ i } } } : 1 \leq j \leq m } \cap \Delta_{ V } .
\]
The subspaces of this associated flag are defined by the partial sums $ \bigoplus_{ i = 1 }^{ j_{ k } }{ U_{ i } } $, where for each $ k \in \theta $, $ 1 \leq j_{ k } \leq m $ is any integer so that $ \sum_{ i = 1 }^{ j_{ k } }{ \dim_{ \K } \of{ U_{ i } } } = k $. We note that every non-trivial flag in $ V $ is the associated flag of several (in fact, uncountably infinitely many) direct sum decompositions of $ V $.

A linear transformation $ \vect{ A } \colon V \to V $ is said to \emph{preserve} a direct sum decomposition $ \mathscr{ U } = \of{ U_{ j } }_{ j = 1 }^{ m } $ of $ V $ if it preserves each factor; that is, if $ \vect{ A } \of{ U_{ j } } \subseteq U_{ j } $ for all $ 1 \leq j \leq m $. From this condition we obtain the subsets
\begin{align*}
\End_{ \K } \of{ \mathscr{ U } } & = \set{ \vect{ A } \in \End_{ \K } \of{ V } : \vect{ A } \of{ U_{ j } } \subseteq U_{ j } \textrm{ for all } 1 \leq j \leq m } , \\
\Aut_{ \K } \of{ \mathscr{ U } } & = \set{ \vect{ A } \in \GL \of{ V } : \vect{ A } \of{ U_{ j } } = U_{ j } \textrm{ for all } 1 \leq j \leq m } = \End_{ \K } \of{ \mathscr{ U } } \cap \GL \of{ V } .
\end{align*}
Note that any linear transformation which preserves a direct sum decomposition also preserves its associated flag; that is, block diagonal linear transformations are block upper triangular. However, the converse to this statement is false as long as said flag is non-trivial.

\begin{fact+}

A linear transformation $ \vect{ A } \colon V \to V $ of a finite-dimensional vector space $ V $ over $ \K $ is block diagonal relative to a direct sum decomposition $ \mathscr{ U } = \of{ U_{ j } }_{ j = 1 }^{ m } $ of $ V $ if and only if there is a sequence $ \of{ \vect{ A }_{ j } }_{ j = 1 }^{ m } $ of linear transformations $ \vect{ A }_{ j } \colon U_{ j } \to U_{ j } $ so that
\[
\vect{ A } \of{ \vect{ v } } = \sum_{ j = 1 }^{ m }{ \vect{ A }_{ j } \of{ \pi_{ j } \of{ \vect{ v } } } }
\]
for all $ \vect{ v } \in V $, where $ \pi_{ j } \colon V \onto U_{ j } $ is projection onto the $ j $th factor $ U_{ j } $ with kernel spanned by the other factors. Such a sequence $ \of{ \vect{ A }_{ j } }_{ j = 1 }^{ m } $ is uniquely defined by the above property and is called the \emph{block decomposition} of $ \vect{ A } $ relative to $ \mathscr{ U } $. $ \vect{ A }_{ j } $ is called the $ j $th \emph{block} of $ \vect{ A } $ relative to $ \mathscr{ U } $, and we write $ \vect{ A } = \bigoplus_{ j = 1 }^{ m }{ \vect{ A }_{ j } } $, so that
\begin{align*}
\End_{ \K } \of{ \mathscr{ U } } & = \set{ \bigoplus_{ j = 1 }^{ m }{ \vect{ A }_{ j } } \in \End_{ \K } \of{ V } : \vect{ A }_{ j } \in \End_{ \K } \of{ U_{ j } } \textrm{ for all } 1 \leq j \leq m } = \bigoplus_{ j = 1 }^{ m }{ \End_{ \K } \of{ U_{ j } } } ,
\end{align*}
and similarly
\begin{align*}
\Aut_{ \K } \of{ \mathscr{ U } } & = \set{ \bigoplus_{ j = 1 }^{ m }{ \vect{ A }_{ j } } \in \GL \of{ V } : \vect{ A }_{ j } \in \GL \of{ U_{ j } } \textrm{ for all } 1 \leq j \leq m } \cong \prod_{ j = 1 }^{ m }{ \GL \of{ U_{ j } } } .
\end{align*}

\end{fact+}

Analogously, a representation $ \rho \colon \Gamma \to \GL \of{ V } $ is said to \emph{preserve} a direct sum decomposition $ \mathscr{ U } $ of $ V $ if $ \rho \of{ \gamma } $ preserves $ \mathscr{ U } $ for each $ \gamma \in \Gamma $; that is, $ \rho $ preserves $ \mathscr{ U } $ if $ \rho \of{ \Gamma } \subseteq \Aut_{ \K } \of{ \mathscr{ U } } $. In this case, the map $ \rho_{ j } \colon \Gamma \to \GL \of{ U_{ j } } $ which on input $ \gamma \in \Gamma $ returns the $ j $th block of $ \rho \of{ \gamma } $ relative to $ \mathscr{ U } $ is a representation of $ \Gamma $ in $ U_{ j } $ called the $ j $th \emph{block} of $ \rho $ relative to $ \mathscr{ U } $. $ \rho $ is called \emph{semisimple} or \emph{completely reducible} if it preserves a direct sum decomposition so that each block is irreducible. In this case, $ \rho $ is said to be \emph{completely reduced} relative to said direct sum decomposition.

We now describe a standard form for representations in $ V $ which preserve a given direct sum decomposition $ \mathscr{ U } $ of $ V $. This standard form derives from a direct product decomposition of $ \Aut_{ \K } \of{ \mathscr{ U } } $.

\begin{proposition+} \label{prop:blockDiagonalIsomorphism}

Fix a direct sum decomposition $ \mathscr{ U } = \of{ U_{ j } }_{ j = 1 }^{ m } $ of a finite-dimensional non-zero vector space $ V $ over $ \K $, and consider the subspace
\[
D_{ \mathscr{ U } } = \set{ \vect{ x } \in \R^{ m } : \sum_{ j = 1 }^{ m }{ \dim_{ \K } \of{ U_{ j } } x_{ j } } = 0 , \textrm{ and } x_{ j } = 0 \textrm{ for all } 1 \leq j \leq m \textrm{ so that } U_{ j } = \set{ \vect{ 0 } } }
\]
and the group
\[
N_{ \mathscr{ U } } = \set{ \bigoplus_{ j = 1 }^{ m }{ \vect{ B }_{ j } } \in \Aut_{ \K } \of{ \mathscr{ U } } : \vect{ B }_{ j } \in \SL^{ * } \of{ U_{ j } } \textrm{ for all } 1 \leq j \leq m } \cong \prod_{ j = 1 }^{ m }{ \SL^{ * } \of{ U_{ j } } } .
\]
There is a bijection $ \beta_{ \mathscr{ U } } \colon \hom \of{ \Gamma , \R } \times \hom \of{ \Gamma , D_{ \mathscr{ U } } } \times \hom \of{ \Gamma , N_{ \mathscr{ U } } } \to \hom \of{ \Gamma , \Aut_{ \K } \of{ \mathscr{ U } } } $ which induces a bijection $ \beta_{ \mathscr{ U } } \of{ 0 , \oparg , \oparg } \colon \hom \of{ \Gamma , D_{ \mathscr{ U } } } \times \hom \of{ \Gamma , N_{ \mathscr{ U } } } \to \hom \of{ \Gamma , \Aut_{ \K } \of{ \mathscr{ U } } \cap \SL^{ * } \of{ V } } $.

\begin{proof}

One useful map $ \Psi_{ \mathscr{ U } } \colon \R \times D_{ \mathscr{ U } } \times N_{ \mathscr{ U } } \to \Aut_{ \K } \of{ \mathscr{ U } } $ is given by the formula
\[
\Psi_{ \mathscr{ U } } \of{ s , \vect{ x } , \bigoplus_{ j = 1 }^{ m }{ \vect{ B }_{ j } } } = \bigoplus_{ j = 1 }^{ m }{ e^{ s + x_{ j } } \vect{ B }_{ j } } .
\]
One can check that this defines a group isomorphism, with inverse given by the formula
\[
\Psi_{ \mathscr{ U } }^{ -1 } \of{ \bigoplus_{ j = 1 }^{ m }{ \vect{ A }_{ j } } } = \of{ \frac{ \log \of{ \abs{ \det \of{ \vect{ A } } } } }{ \dim_{ \K } \of{ V } } , \of{ \frac{ \log \of{ \abs{ \det \of{ \vect{ A }_{ j.
} } } } }{ \dim_{ \K } \of{ U_{ j } } } - \frac{ \log \of{ \abs{ \det \of{ \vect{ A } } } } }{ \dim_{ \K } \of{ V } } }_{ j = 1 }^{ m } , \bigoplus_{ j = 1 }^{ m }{ \frac{ \vect{ A }_{ j } }{ \abs{ \det \of{ \vect{ A }_{ j } } }^{ \frac{ 1 }{ \dim_{ \K } \of{ U_{ j } } } } } } } .
\]
We note that if the $ j $th factor $ U_{ j } $ is $ \set{ \vect{ 0 } } $ for some $ 1 \leq j \leq m $, then parts of the above formula are undefined. Specifically, the $ j $th component of the resulting vector in $ D_{ \mathscr{ U } } $ and the $ j $th block of the linear transformation in $ N_{ \mathscr{ U } } $ are not defined. In this case, the definition of $ D_{ \mathscr{ U } } $ and the unique choice of linear transformation $ U_{ j } \to U_{ j } $ should make clear what should replace these formulas, respectively. We will allow this abuse of notation for ease of legibility.

Given homomorphisms $ \delta \colon \Gamma \to \R $ and $ \varphi \colon \Gamma \to D_{ \mathscr{ U } } $ and a representation $ \zeta \colon \Gamma \to N_{ \mathscr{ U } } $, we define a map $ \eta = \beta_{ \mathscr{ U } } \of{ \delta , \varphi , \zeta } \colon \Gamma \to \Aut_{ \K } \of{ \mathscr{ U } } $ by the formula $ \eta \of{ \gamma } = \Psi_{ \mathscr{ U } } \of{ \delta \of{ \gamma } , \varphi \of{ \gamma } , \zeta \of{ \gamma } } $. Note that since $ \Psi_{ \mathscr{ U } } $, $ \delta $, $ \varphi $, and $ \zeta $ are homomorphisms, $ \eta \in \hom \of{ \Gamma , \Aut_{ \K } \of{ \mathscr{ U } } } $. That $ \beta_{ \mathscr{ U } } \colon \hom \of{ \Gamma , \R } \times \hom \of{ \Gamma , D_{ \mathscr{ U } } } \times \hom \of{ \Gamma , N_{ \mathscr{ U } } } \to \hom \of{ \Gamma , \Aut_{ \K } \of{ \mathscr{ U } } } $ is a bijection is a standard consequence of the properties of direct products of groups.

We also note that
\begin{align*}
\abs{ \det \of{ \Psi_{ \mathscr{ U } } \of{ s , \vect{ x } , \bigoplus_{ j = 1 }^{ m }{ \vect{ B }_{ j } } } } } & = \abs{ \det \of{ \bigoplus_{ j = 1 }^{ m }{ e^{ s + x_{ j } } \vect{ B }_{ j } } } } = \prod_{ j = 1 }^{ m }{ \abs{ \det \of{ e^{ s + x_{ j } } \vect{ B }_{ j } } } } \\
& = \prod_{ j = 1 }^{ m }{ e^{ \dim_{ \K } \of{ U_{ j } } \of{ s + x_{ j } } } \abs{ \det \of{ \vect{ B }_{ j } } } } = \exp \of{ \sum_{ j = 1 }^{ m }{ \dim_{ \K } \of{ U_{ j } } \of{ s + x_{ j } } } } \\
& = e^{ \dim_{ \K } \of{ V } s + 0 } = e^{ \dim_{ \K } \of{ V } s }
\end{align*}
for all $ s \in \R $, $ \vect{ x } \in D_{ \mathscr{ U } } $, and $ \bigoplus_{ j = 1 }^{ m }{ \vect{ B }_{ j } } \in N_{ \mathscr{ U } } $, so that $ \Psi_{ \mathscr{ U } } \of{ s , \vect{ x } , \bigoplus_{ j = 1 }^{ m }{ \vect{ B }_{ j } } } \in \SL^{ * } \of{ V } $ if and only if $ s = 0 $. Thus $ \Psi_{ \mathscr{ U } } \colon \R \times D_{ \mathscr{ U } } \times N_{ \mathscr{ U } } \to \Aut_{ \K } \of{ V } $ induces an isomorphism $ \Psi_{ \mathscr{ U } } \of{ 0 , \oparg , \oparg } \colon D_{ \mathscr{ U } } \times N_{ \mathscr{ U } } \to \Aut_{ \K } \of{ \mathscr{ U } } \cap \SL^{ * } \of{ V } $, and so $ \beta_{ \mathscr{ U } } \of{ 0 , \oparg , \oparg } \colon \hom \of{ \Gamma , D_{ \mathscr{ U } } } \times \hom \of{ \Gamma , N_{ \mathscr{ U } } } \to \hom \of{ \Gamma , \Aut_{ \K } \of{ \mathscr{ U } } \cap \SL^{ * } \of{ V } } $ is a bijection. \qedhere

\end{proof}

\end{proposition+}

For a direct sum decomposition $ \mathscr{ U } = \of{ U_{ j } }_{ j = 1 }^{ m } $ of a non-zero finite-dimensional vector space $ V $ over $ \K $, given homomorphisms $ \delta \colon \Gamma \to \R $ and $ \varphi \colon \Gamma \to D_{ \mathscr{ U } } $ and a representation $ \zeta \colon \Gamma \to N_{ \mathscr{ U } } $, we will call the representation $ \eta = \beta_{ \mathscr{ U } } \of{ \delta , \varphi , \zeta } \colon \Gamma \to \Aut_{ \K } \of{ \mathscr{ U } } $ defined above in \autoref{prop:blockDiagonalIsomorphism} the associated \emph{block diagonal representation}. In this setting, we will refer to $ \varphi $ and $ \zeta $ as the \emph{block deformation} and \emph{block normalization}, respectively, of $ \eta $ relative to $ \mathscr{ U } $; representations with image contained in $ N_{ \mathscr{ U } } $ will be called \emph{block normalized} relative to $ \mathscr{ U } $.

For a concrete formula for $ \beta_{ \mathscr{ U } } \of{ \delta , \varphi , \zeta } $, let $ \varphi_{ j } \colon \Gamma \to \R $ and $ \zeta_{ j } \colon \Gamma \to \SL^{ * } \of{ U_{ j } } $ denote the $ j $th component of the block deformation $ \varphi \colon \Gamma \to D_{ \mathscr{ U } } $ and the $ j $th block of the block normalization $ \zeta \colon \Gamma \to N_{ \mathscr{ U } } $, respectively, so that $ \varphi \of{ \gamma } = \of{ \varphi_{ j } \of{ \gamma } }_{ j = 1 }^{ m } $ and $ \zeta \of{ \gamma } = \bigoplus_{ j = 1 }^{ m }{ \zeta_{ j } \of{ \gamma } } $ for all $ \gamma \in \Gamma $. Then
\[
\eta \of{ \gamma } = \bigoplus_{ j = 1 }^{ m }{ e^{ \delta \of{ \gamma } + \varphi_{ j } \of{ \gamma } } \zeta_{ j } \of{ \gamma } }
\]
for all $ \gamma \in \Gamma $.

We remark that while the above formula for $ \beta_{ \mathscr{ U } } \of{ \delta , \varphi , \zeta } $ is a generalization of that of the semisimplifications of the reducible suspensions introduced in \cite[Section~1]{Lah24}, which themselves generalize the linear $ \vect{ u } $-deformations described by Barbot \cite[Section~4.1.2]{Bar10}, every linear representation of $ \Gamma $ is a block diagonal representation (choose for example $ \mathscr{ U } $ to be the trivial direct sum decomposition). In fact, every such linear representation is equivalent to a semisimple block diagonal representation. 

\begin{corollary+} \label{cor:blockDiagonalization}

Every representation $ \rho \colon \Gamma \to \GL \of{ V } $ of $ \Gamma $ is $ \SL \of{ V } $-equivalent to a block diagonal representation $ \eta = \beta_{ \mathscr{ U } } \of{ \delta , \varphi , \zeta } $. Moreover,
\begin{enumerate}

\item[(1)]

We can choose the direct sum decomposition $ \mathscr{ U } $ so that the block diagonalization $ \eta $ and its block normalization $ \zeta $ relative to $ \mathscr{ U } $ are completely reduced relative to $ \mathscr{ U } $; and

\item[(2)]

For any subset $ \theta \subseteq \Delta_{ V } $, one of $ \rho $, $ \eta $, and $ \beta_{ \mathscr{ U } } \of{ 0 , \varphi , \zeta } $ is $ P_{ \theta } $-Anosov if and only if all are.

\end{enumerate}

\begin{proof}

Let $ \mathscr{ U } = \of{ U_{ j } }_{ j = 1 }^{ m } $ be a direct sum decomposition of $ V $ with associated flag $ F $, and consider the linear transformation $ \vect{ C }_{ \mathscr{ U } } \colon V \to V $ and the map $ B_{ \mathscr{ U } } \colon \stabz_{ \GL \of{ V } } \of{ F } \to \End_{ \K } \of{ \mathscr{ U } } $ defined by the formulas
\begin{align*}
\vect{ C }_{ \mathscr{ U } } & = \bigoplus_{ j = 1 }^{ m }{ e^{ j - \floor{ \frac{ m }{ 2 } } - 1 } \id_{ U_{ j } } } & B_{ \mathscr{ U } } \of{ \vect{ A } } & = \lim_{ n \to \infty }{ \vect{ C }_{ \mathscr{ U } }^{ n } \vect{ A } \vect{ C }_{ \mathscr{ U } }^{ -n } } = \bigoplus_{ j = 1 }^{ m }{ \pi_{ j } \circ \vect{ A } \circ \iota_{ j } } ,
\end{align*}
where for each $ 1 \leq j \leq m $, $ \iota_{ j } \colon U_{ j } \into V $ and $ \pi_{ j } \colon V \onto U_{ j } $ are inclusion of and projection onto the $ j $th factor $ U_{ j } $. One can check that the above limit always converges to the stated formula, and that said formula defines a homomorphism of $ \K $-algebras. In particular, $ B_{ \mathscr{ U } } $ restricts to a group homomorphism $ \Aut_{ \K } \of{ F } \to \Aut_{ \K } \of{ \mathscr{ U } } $.

We observe that if $ \rho $ preserves $ F $, then the composition $ \eta \coloneqq B_{ \mathscr{ U } } \circ \rho \colon \Gamma \to \Aut_{ \K } \of{ \mathscr{ U } } $ is $ \SL \of{ V } $-equivalent to $ \rho $, since $ \vect{ C }_{ \mathscr{ U } } \in \SL \of{ V } $. Moreover, since $ \eta $ preserves $ \mathscr{ U } $, $ \eta = \beta_{ \mathscr{ U } } \of{ \delta , \varphi , \zeta } $ for some homomorphisms $ \delta \colon \Gamma \to \R $ and $ \varphi \colon \Gamma \to D_{ \mathscr{ U } } $ and some block normalized representation $ \zeta \colon \Gamma \to N_{ \mathscr{ U } } $.

\begin{enumerate}

\item[(1)]

If we choose $ \mathscr{ U } $ so that $ F $ is maximal among flags in $ V $ preserved by $ \rho $, then $ \eta $ and $ \zeta $ are completely reduced relative to $ \mathscr{ U } $. This is because any non-trivial flag in a factor of $ \mathscr{ U } $ preserved by $ \eta $ or $ \zeta $ can be used to extend $ F $ to a flag of strictly larger signature preserved by $ \rho $.

\item[(2)]

Since $ \rho $ and $ \eta $ are $ \SL \of{ V } $-equivalent, one is $ P_{ \theta } $-Anosov if and only if the other is. Since
\[
\eta \of{ \gamma } = \beta_{ \mathscr{ U } } \of{ \delta , \varphi , \zeta } \of{ \gamma } = \bigoplus_{ j = 1 }^{ m }{ e^{ \delta \of{ \gamma } + \varphi_{ j } \of{ \gamma } } \zeta_{ j } \of{ \gamma } } = e^{ \delta \of{ \gamma } } \bigoplus_{ j = 1 }^{ m }{ e^{ 0 + \varphi_{ j } \of{ \gamma } } \zeta_{ j } \of{ \gamma } } = e^{ \delta \of{ \gamma } } \beta_{ \mathscr{ U } } \of{ 0 , \varphi , \zeta } \of{ \gamma }
\]
for all $ \gamma \in \Gamma $, $ \eta $ and $ \beta_{ \mathscr{ U } } \of{ 0 , \varphi , \zeta } $ have the same logarithmic eigenvalue gaps. Specifically,
\[
\log \of{ \frac{ \lambda_{ k } \of{ \eta \of{ \gamma } } }{ \lambda_{ k + 1 } \of{ \eta \of{ \gamma } } } } = \log \of{ \frac{ e^{ \delta \of{ \gamma } } \lambda_{ k } \of{ \beta_{ \mathscr{ U } } \of{ 0 , \varphi , \zeta } \of{ \gamma } } }{ e^{ \delta \of{ \gamma } } \lambda_{ k + 1 } \of{ \beta_{ \mathscr{ U } } \of{ 0 , \varphi , \zeta } \of{ \gamma } } } } = \log \of{ \frac{ \lambda_{ k } \of{ \beta_{ \mathscr{ U } } \of{ 0 , \varphi , \zeta } \of{ \gamma } } }{ \lambda_{ k + 1 } \of{ \beta_{ \mathscr{ U } } \of{ 0 , \varphi , \zeta } \of{ \gamma } } } }
\]
for all $ k \in \theta $ and $ \gamma \in \Gamma $, and therefore $ \eta $ is $ P_{ \theta } $-Anosov if and only if $ \beta_{ \mathscr{ U } } \of{ 0 , \varphi , \delta } $ is. \qedhere

\end{enumerate}

\end{proof}

\end{corollary+}

The homomorphism $ B_{ \mathscr{ U } } \colon \stabz_{ \GL \of{ V } } \of{ F } \to \End_{ \K } \of{ \mathscr{ U } } $ obtained above in the proof of \autoref{cor:blockDiagonalization} is called \emph{block diagonalization} relative to $ \mathscr{ U } $. If a representation or linear transformation obtained via this method is completely reduced relative to $ \mathscr{ U } $, then such a block diagonalization is called the \emph{semisimplification} relative to $ \mathscr{ U } $. The dynamical properties of such semisimplifications were investigated by Guichard--Gueritaud--Kassel--Wienhard \cite[Section~2.5.4, Proposition~1.8]{GGKW17}, who proved before the introduction of Kassel--Potrie's \cite[Corollary~4.6]{KP22} characterization of Anosov representations that a representation is Anosov if and only if its semisimplification is.

\section{The Anosov Condition for Reducible Representations} \label{sec:AnosovCondition}

\autoref{cor:blockDiagonalization} implies that in order to understand which reducible representations of $ \Gamma $ are Anosov, it suffices to develop such an understanding for block diagonal representations in terms of conditions on the blocks. Toward this end, fix for this section a direct sum decomposition $ \mathscr{ U } = \of{ U_{ j } }_{ j = 1 }^{ m } $ of a non-zero finite-dimensional vector space $ V $ over $ \K $ with projection maps $ \pi_{ j } \colon V \onto U_{ j } $ and partial sums
\[
U_{ j }' = \bigoplus_{ i = 1 }^{ j }{ U_{ i } } ,
\]
for all $ 1 \leq j \leq m $. We recall that all but the last of these partial sums are precisely the subspaces comprising the flag associated to $ \mathscr{ U } $.

\subsection{Eigenvalues of Block Upper Triangular Linear Transformations}

Towards our goal of adapting the eigenvalue characterization of Anosov representations introduced by Kassel--Potrie \cite{KP22} to reducible representations, we will first consider the attracting and repelling subspaces and eigenvalue magnitudes of block upper triangular linear transformations and their block diagonalizations.

\begin{proposition+} \label{prop:reducibleEigenvalues}

Fix a linear transformation $ \vect{ A } \colon V \to V $ which is block upper triangular relative to $ \mathscr{ U } $, and consider its block diagonalization $ \vect{ B } = \bigoplus_{ j = 1 }^{ m }{ \vect{ B }_{ j } } $, as described in \autoref{cor:blockDiagonalization}. For each $ 1 \leq j \leq m $, denote by $ \vect{ A }_{ j }' \colon U_{ j }' \to U_{ j }' $ the restriction of $ \vect{ A } $ to $ U_{ j }' $. Then the following hold:
\begin{enumerate}

\item[(1)]

$ U_{ j }' \cap \vect{ A }^{ \pm }_{ k } = \of{ \vect{ A }_{ j }' }^{ \pm }_{ \dim_{ \K } \of{ U_{ j }' \cap \vect{ A }^{ \pm }_{ k } } } $ and $ \pi_{ j } \of{ U_{ j }' \cap \vect{ A }^{ \pm }_{ k } } = U_{ j } \cap \vect{ B }^{ \pm }_{ k } = \of{ \vect{ B }_{ j } }^{ \pm }_{ \dim_{ \K } \of{ U_{ j } \cap \vect{ B }^{ \pm }_{ k } } } $ for all $ 1 \leq j \leq m $ and $ 0 \leq k \leq \dim_{ \K } \of{ V } $.

\item[(2)]

$ \vect{ A }^{ \pm }_{ k } \cong \bigoplus_{ j = 1 }^{ m }{ \pi_{ j } \of{ U_{ j }' \cap \vect{ A }^{ \pm }_{ k } } } = \bigoplus_{ j = 1 }^{ m }{ U_{ j } \cap \vect{ B }^{ \pm }_{ k } } = \vect{ B }^{ \pm }_{ k } $ for all $ 0 \leq k \leq \dim_{ \K } \of{ V } $.

\item[(3)]

$ U_{ j }' = \lspan_{ \K } \of{ U_{ j }' \cap \vect{ A }^{ + }_{ k } , U_{ j }' \cap \vect{ A }^{ - }_{ \dim_{ \K } \of{ V } - k } } $ and $ U_{ j } = \lspan_{ \K } \of{ U_{ j } \cap \vect{ B }^{ + }_{ k } , U_{ j } \cap \vect{ B }^{ - }_{ \dim_{ \K } \of{ V } - k } } $ for all $ 1 \leq j \leq m $ and $ 0 \leq k \leq \dim_{ \K } \of{ V } $.

\item[(4)]

$ \vect{ A } $ and $ \vect{ B } $ have eigenvalue magnitudes
\begin{align*}
\lambda_{ k } \of{ \vect{ A } } & = \lambda_{ k } \of{ \vect{ B } } = \min_{ i }{ \lambda_{ \dim_{ \K } \of{ \pi_{ i } \of{ U_{ i }' \cap \vect{ A }^{ + }_{ k } } } } \of{ \vect{ B }_{ i } } } = \max_{ j }{ \lambda_{ \dim_{ \K } \of{ U_{ j } } - \dim_{ \K } \of{ \pi_{ j } \of{ U_{ j }' \cap \vect{ A }^{ - }_{ \iota_{ V } \of{ k } + 1 } } + 1 } } \of{ \vect{ B }_{ j } } }
\end{align*}
for all $ 1 \leq k \leq \dim_{ \K } \of{ V } $, where the above minimum and maximum are index by all $ 1 \leq i , j \leq m $ so that $ \dim_{ \K } \of{ \pi_{ i } \of{ U_{ i }' \cap \vect{ A }^{ + }_{ k } } } > 0 $ and $ \dim_{ \K } \of{ \pi_{ j } \of{ U_{ j }' \cap \vect{ A }^{ - }_{ \iota_{ V } \of{ k } + 1 } } } > 0 $.

\item[(5)]

$ \vect{ B }_{ j } $ is $ P_{ \theta_{ j } } $-biproximal for all $ 1 \leq j \leq m $, where $ \theta_{ j } = \set{ \dim_{ \K } \of{ \pi_{ j } \of{ U_{ j }' \cap \vect{ A }^{ + }_{ k } } } : k \in \Delta_{ V } } \cap \Delta_{ U_{ j } } $.

\end{enumerate}

\begin{proof}

We will prove these results under the assumption that $ \K = \C $. If $ \K = \R $, then the same results may be proved by applying the following arguments to the linear transformation $ \vect{ A } \otimes \id_{ \C } $ induced by $ \vect{ A } $ on the complexification $ V \otimes_{ \R } \C $ of $ V $.

We begin by recalling from the proof of \autoref{cor:blockDiagonalization} that $ \vect{ B } = \lim_{ n \to \infty }{ \vect{ C }_{ \mathscr{ U } }^{ n } \vect{ A } \vect{ C }_{ \mathscr{ U } }^{ -n } } $, where
\[
\vect{ C }_{ \mathscr{ U } } = \bigoplus_{ j = 1 }^{ m }{ e^{ j - \floor{ \frac{ m }{ 2 } } - 1 } \id_{ U_{ j } } } .
\]
This implies that $ \lambda_{ k } \of{ \vect{ A } } = \lambda_{ k } \of{ \vect{ B } } $ for all $ 1 \leq k \leq \dim_{ \K } \of{ V } $. Inspecting the above formula for $ \vect{ C }_{ \mathscr{ U } } $, we also note that
\[
\lim_{ n \to \infty }{ e^{ -\of{ j - \floor{ \frac{ m }{ 2 } } - 1 } n } \vect{ C }_{ \mathscr{ U } }^{ n } \of{ \vect{ v } } } = \pi_{ j } \of{ \vect{ v } }
\]
for all $ 1 \leq j \leq m $ and $ \vect{ v } \in U_{ j }' $.

Proceeding inductively, we fix pairwise disjoint sets $ E_{ 1 } , \dotsc , E_{ m } \subsetneq V $ of generalized eigenvectors of $ \vect{ A } $ so that for each $ 1 \leq j \leq m $, $ E_{ j }' = \bigsqcup_{ i = 1 }^{ j }{ E_{ i } } $ is a basis for $ U_{ j }' = \bigoplus_{ i = 1 }^{ j }{ U_{ i } } $. This inductive construction ensures that $ \card{ E_{ j } } = \dim_{ \K } \of{ U_{ j } } $, and we will label these generalized eigenvectors by
\[
E_{ j } = \set{ \vect{ e }_{ j , \ell } \in U_{ j }' : 1 \leq \ell \leq \dim_{ \K } \of{ U_{ j } } } .
\]
We note that $ \pi_{ j } \colon U_{ j }' \onto U_{ j } $ is surjective, and $ \bigsqcup_{ i = 1 }^{ j - 1 }{ E_{ i } } \subsetneq \ker \of{ \pi_{ j } } $, so that $ \pi_{ j } \of{ E_{ j } } $ spans $ U_{ j } $. We conclude by the rank-nullity theorem that $ \pi_{ j } $ restricts to an isomorphism $ \lspan_{ \K } \of{ E_{ j } } \to U_{ j } $, so that $ \pi_{ j } \of{ E_{ j } } $ is a basis for $ U_{ j } $.

Denote by $ \mu_{ j , \ell } \in \K^{ \times } $ and $ 1 \leq r_{ j , \ell } \leq \dim_{ \K } \of{ V } $ the eigenvalue and rank of $ \vect{ e }_{ j , \ell } $, considered as a generalized eigenvector of $ \vect{ A } $, so that $ \of{ \vect{ A } - \mu_{ j , \ell } \id_{ V } }^{ r_{ j , \ell } } \of{ \vect{ e }_{ j , \ell } } = \vect{ 0 } $ for all $ 1 \leq j \leq m $ and $ 1 \leq \ell \leq \dim_{ \K } \of{ U_{ j } } $. In particular, we note that
\begin{align*}
\of{ \vect{ C }_{ \mathscr{ U } }^{ n } \vect{ A } \vect{ C }_{ \mathscr{ U } }^{ -n } - \mu_{ j , \ell } \id_{ V } }^{ r_{ j , \ell } } \of{ e^{ -\of{ j - \floor{ \frac{ m }{ 2 } } - 1 } n } \vect{ C }_{ \mathscr{ U } }^{ n } \of{ \vect{ e }_{ j , \ell } } } & = e^{ -\of{ j - \floor{ \frac{ m }{ 2 } } - 1 } n } \vect{ C }_{ \mathscr{ U } }^{ n } \of{ \of{ \vect{ A } - \mu_{ j , \ell } \id_{ V } }^{ r_{ j , \ell } } \of{ \vect{ e }_{ j , \ell } } } \\
& = e^{ -\of{ j - \floor{ \frac{ m }{ 2 } } - 1 } n } \vect{ C }_{ \mathscr{ U } }^{ n } \of{ \vect{ 0 } } = \vect{ 0 }
\end{align*}
for all $ n \in \N $. We observe in the limit that $ \of{ \vect{ B } - \mu_{ j , \ell } \id_{ V } }^{ r_{ j , \ell } } \of{ \pi_{ j } \of{ \vect{ e }_{ j , \ell } } } = \vect{ 0 } $; that is, $ \pi_{ j } \of{ \vect{ e }_{ j , \ell } } $ is a generalized eigenvector of $ \vect{ B } $ of eigenvalue $ \mu_{ j , \ell } $, although not necessarily of rank $ r_{ j , \ell } $. In fact, since
\[
\vect{ B }^{ n } \of{ \pi_{ j } \of{ \vect{ e }_{ j , \ell } } } = \of{ \bigoplus_{ i = 1 }^{ m }{ \vect{ B }_{ i }^{ n } } } \of{ \pi_{ j } \of{ \vect{ e }_{ j , \ell } } } = \sum_{ i = 1 }^{ m }{ \vect{ B }_{ i }^{ n } \of{ \pi_{ i } \of{ \pi_{ j } \of{ \vect{ e }_{ j , \ell } } } } } = \vect{ B }_{ j }^{ n } \of{ \pi_{ j } \of{ \vect{ e }_{ j , \ell } } }
\]
for all $ n \in \N $, $ \pi_{ j } \of{ \vect{ e }_{ j , \ell } } $ is also a generalized eigenvector of $ \vect{ B }_{ j } $ of eigenvalue $ \mu_{ j , \ell } $. In summary, we have shown that $ \pi_{ j } \of{ E_{ j } } $ is a basis for $ U_{ j } $ consisting of generalized eigenvectors of both $ \vect{ B } $ and $ \vect{ B }_{ j } $. We can further conclude that $ \bigsqcup_{ j = 1 }^{ m }{ \pi_{ j } \of{ E_{ j } } } $ is a basis for $ V = \bigoplus_{ j = 1 }^{ m }{ U_{ j } } $ consisting of generalized eigenvectors of $ \vect{ B } $. 

\begin{enumerate}

\item[(1)]

The desired results clearly hold if $ k = 0 $, so we may assume that $ 1 \leq k \leq \dim_{ \K } \of{ V } $. Note that $ E_{ m }' = \bigsqcup_{ i = 1 }^{ m }{ E_{ i } } $ is a basis for $ V $ which contains both $ E_{ j }' $ and $ E_{ m }' \cap \vect{ A }^{ \pm }_{ k } $, themselves bases for $ U_{ j }' $ and $ \vect{ A }^{ \pm }_{ k } $, respectively. Thus $ E_{ j }' \cap E_{ m }' \cap \vect{ A }^{ \pm }_{ k } = E_{ j }' \cap \vect{ A }^{ \pm }_{ k } $ is a basis for $ U_{ j }' \cap \vect{ A }^{ \pm }_{ k } $. We now note that
\begin{align*}
E_{ j }' \cap \vect{ A }^{ + }_{ k } & = \set{ \vect{ e }_{ i , \ell } \in U_{ j }' : 1 \leq i \leq j , 1 \leq \ell \leq \dim_{ \K } \of{ U_{ i } } , \textrm{ and } \abs{ \mu_{ i , \ell } } \geq \lambda_{ k } \of{ \vect{ A } } } \\
& = \set{ \vect{ e }_{ i , \ell } \in U_{ j }' : 1 \leq i \leq j , 1 \leq \ell \leq \dim_{ \K } \of{ U_{ i } } , \textrm{ and } \abs{ \mu_{ i , \ell } } \geq \lambda_{ \card{ E_{ j }' \cap \vect{ A }^{ + }_{ k } } } \of{ \vect{ A }_{ j }' } } \\
& = E_{ j }' \cap \of{ \vect{ A }_{ j }' }^{ + }_{ \card{ E_{ j }' \cap \vect{ A }^{ + }_{ k } } } = E_{ j }' \cap \of{ \vect{ A }_{ j }' }^{ + }_{ \dim_{ \K } \of{ U_{ j }' \cap \vect{ A }^{ + }_{ k } } }
\end{align*}
is also a basis for $ U_{ j }' \cap \of{ \vect{ A }_{ j }' }^{ + }_{ \dim_{ \K } \of{ U_{ j }' \cap \vect{ A }^{ + }_{ k } } } $, and similarly
\begin{align*}
E_{ j }' \cap \vect{ A }^{ - }_{ k } & = \set{ \vect{ e }_{ i , \ell } \in U_{ j }' : 1 \leq i \leq j , 1 \leq \ell \leq \dim_{ \K } \of{ U_{ i } } , \textrm{ and } \abs{ \mu_{ i , \ell } } \leq \lambda_{ \dim_{ \K } \of{ V } - k + 1 } \of{ \vect{ A } } } \\
& = \set{ \vect{ e }_{ i , \ell } \in U_{ j }' : 1 \leq i \leq j , 1 \leq \ell \leq \dim_{ \K } \of{ U_{ i } } , \textrm{ and } \abs{ \mu_{ i , \ell } } \leq \lambda_{ \dim_{ \K } \of{ U_{ j }' } - \card{ E_{ j }' \cap \vect{ A }^{ + }_{ k } } + 1 } \of{ \vect{ A }_{ j }' } } \\
& = E_{ j }' \cap \of{ \vect{ A }_{ j }' }^{ - }_{ \card{ E_{ j }' \cap \vect{ A }^{ - }_{ k } } } = E_{ j }' \cap \of{ \vect{ A }_{ j }' }^{ - }_{ \dim_{ \K } \of{ U_{ j }' \cap \vect{ A }^{ - }_{ k } } }
\end{align*}
is also a basis for $ U_{ j }' \cap \of{ \vect{ A }_{ j }' }^{ - }_{ \dim_{ \K } \of{ U_{ j }' \cap \vect{ A }^{ - }_{ k } } } $. We conclude that $ U_{ j }' \cap \vect{ A }^{ \pm }_{ k } = \of{ \vect{ A }_{ j }' }^{ \pm }_{ \dim_{ \K } \of{ U_{ j }' \cap \vect{ A }^{ \pm }_{ k } } } $.

Entirely analogously, $ \bigsqcup_{ i = 1 }^{ m }{ \pi_{ j } \of{ E_{ j } } } $ is a basis for $ V $ which contains both $ \pi_{ j } \of{ E_{ j } } $ and $ \of{ \bigsqcup_{ i = 1 }^{ m }{ \pi_{ j } \of{ E_{ j } } } } \cap \vect{ B }^{ \pm }_{ k } $, themselves bases for $ U_{ j } $ and $ \vect{ B }^{ \pm }_{ k } $, respectively. Thus $ \pi_{ j } \of{ E_{ j } } \cap \of{ \bigsqcup_{ i = 1 }^{ m }{ \pi_{ j } \of{ E_{ j } } } } \cap \vect{ B }^{ \pm }_{ k } = \pi_{ j } \of{ E_{ j } } \cap \vect{ B }^{ \pm }_{ k } $ is a basis for $ U_{ j } \cap \vect{ B }^{ \pm }_{ k } $. We now note that
\begin{align*}
\pi_{ j } \of{ E_{ j } } \cap \vect{ B }^{ + }_{ k } & = \set{ \pi_{ j } \of{ \vect{ e }_{ j , \ell } } \in U_{ j } : 1 \leq \ell \leq \dim_{ \K } \of{ U_{ j } } \textrm{ and } \abs{ \mu_{ j , \ell } } \geq \lambda_{ k } \of{ \vect{ B } } } \\
& = \set{ \pi_{ j } \of{ \vect{ e }_{ j , \ell } } \in U_{ j } : 1 \leq \ell \leq \dim_{ \K } \of{ U_{ j } } \textrm{ and } \abs{ \mu_{ j , \ell } } \geq \lambda_{ \card{ \pi_{ j } \of{ E_{ j } } \cap \vect{ B }^{ + }_{ k } } } \of{ \vect{ B }_{ j } } } \\
& = \pi_{ j } \of{ E_{ j } } \cap \of{ \vect{ B }_{ j } }^{ + }_{ \card{ \pi_{ j } \of{ E_{ j } } \cap \vect{ B }^{ + }_{ k } } } = \pi_{ j } \of{ E_{ j } } \cap \of{ \vect{ B }_{ j } }^{ + }_{ \dim_{ \K } \of{ U_{ j } \cap \vect{ B }^{ + }_{ k } } }
\end{align*}
is also a basis for $ U_{ j } \cap \vect{ B }^{ + }_{ k } $, and similarly
\begin{align*}
\pi_{ j } \of{ E_{ j } } \cap \vect{ B }^{ - }_{ k } & = \set{ \pi_{ j } \of{ \vect{ e }_{ j , \ell } } \in U_{ j } : 1 \leq \ell \leq \dim_{ \K } \of{ U_{ j } } \textrm{ and } \abs{ \mu_{ j , \ell } } \leq \lambda_{ \dim_{ \K } \of{ V } - k + 1 } \of{ \vect{ B } } } \\
& = \set{ \pi_{ j } \of{ \vect{ e }_{ j , \ell } } \in U_{ j } : 1 \leq \ell \leq \dim_{ \K } \of{ U_{ j } } \textrm{ and } \abs{ \mu_{ j , \ell } } \geq \lambda_{ \dim_{ \K } \of{ U_{ j } } - \card{ \pi_{ j } \of{ E_{ j } } \cap \vect{ B }^{ + }_{ k } } + 1 } \of{ \vect{ B }_{ j } } } \\
& = \pi_{ j } \of{ E_{ j } } \cap \of{ \vect{ B }_{ j } }^{ - }_{ \card{ \pi_{ j } \of{ E_{ j } } \cap \vect{ B }^{ - }_{ k } } } = \pi_{ j } \of{ E_{ j } } \cap \of{ \vect{ B }_{ j } }^{ - }_{ \dim_{ \K } \of{ U_{ j } \cap \vect{ B }^{ - }_{ k } } }
\end{align*}
is also a basis for $ U_{ j } \cap \vect{ B }^{ - }_{ k } $. We conclude that $ U_{ j } \cap \vect{ B }^{ \pm }_{ k } = \of{ \vect{ B }_{ j } }^{ \pm }_{ \dim_{ \K } \of{ U_{ j } \cap \vect{ B }^{ \pm }_{ k } } } $.

It remains to show $ \pi_{ j } \of{ U_{ j }' \cap \vect{ A }^{ \pm }_{ k } } = U_{ j } \cap \vect{ B }^{ \pm }_{ k } $. To this end, we see that
\begin{align*}
\pi_{ j } \of{ U_{ j }' \cap \vect{ A }^{ + }_{ k } } & = \pi_{ j } \of{ \lspan_{ \K } \of{ E_{ j }' \cap \vect{ A }^{ + }_{ k } } } = \lspan_{ \K } \of{ \pi_{ j } \of{ E_{ j }' \cap \vect{ A }^{ + }_{ k } } } \\
& = \lspan_{ \K } \of{ \set{ \pi_{ j } \of{ \vect{ e }_{ i , \ell } } \in U_{ j } : 1 \leq i \leq j , 1 \leq \ell \leq \dim_{ \K } \of{ U_{ i } } , \textrm{ and } \abs{ \mu_{ i , \ell } } \geq \lambda_{ k } \of{ \vect{ A } } } } \\
& = \lspan_{ \K } \of{ \set{ \pi_{ j } \of{ \vect{ e }_{ j , \ell } } \in U_{ j } : 1 \leq \ell \leq \dim_{ \K } \of{ U_{ j } } , \textrm{ and } \abs{ \mu_{ j , \ell } } \geq \lambda_{ k } \of{ \vect{ B } } } } \\
& = \lspan_{ \K } \of{ \pi_{ j } \of{ E_{ j } } \cap \vect{ B }^{ + }_{ k } } = U_{ j } \cap \vect{ B }^{ + }_{ k } ,
\end{align*}
and similarly
\begin{align*}
\pi_{ j } \of{ U_{ j }' \cap \vect{ A }^{ - }_{ k } } & = \pi_{ j } \of{ \lspan_{ \K } \of{ E_{ j }' \cap \vect{ A }^{ - }_{ k } } } = \lspan_{ \K } \of{ \pi_{ j } \of{ E_{ j }' \cap \vect{ A }^{ - }_{ k } } } \\
& = \lspan_{ \K } \of{ \set{ \pi_{ j } \of{ \vect{ e }_{ i , \ell } } \in U_{ j } : 1 \leq i \leq j , 1 \leq \ell \leq \dim_{ \K } \of{ U_{ i } } , \textrm{ and } \abs{ \mu_{ i , \ell } } \leq \lambda_{ \dim_{ \K } \of{ V } - k + 1 } \of{ \vect{ A } } } } \\
& = \lspan_{ \K } \of{ \set{ \pi_{ j } \of{ \vect{ e }_{ j , \ell } } \in U_{ j } : 1 \leq \ell \leq \dim_{ \K } \of{ U_{ j } } , \textrm{ and } \abs{ \mu_{ j , \ell } } \leq \lambda_{ \dim_{ \K } \of{ V } - k + 1 } \of{ \vect{ B } } } } \\
& = \lspan_{ \K } \of{ \pi_{ j } \of{ E_{ j } } \cap \vect{ B }^{ - }_{ k } } = U_{ j } \cap \vect{ B }^{ - }_{ k } .
\end{align*}

\item[(2)]

We previously argued that for each $ 1 \leq j \leq m $, the projection $ \pi_{ j } \colon V \onto U_{ j } $ onto the $ j $th factor $ U_{ j } $ restricts to a linear isomorphism $ \lspan_{ \K } \of{ E_{ j } } \to U_{ j } $. The further restriction to $ \lspan_{ \K } \of{ E_{ j } \cap \vect{ A }^{ \pm }_{ k } } $ is injective and therefore an isomorphism onto its image
\begin{align*}
\pi_{ j } \of{ \lspan_{ \K } \of{ E_{ j } \cap \vect{ A }^{ \pm }_{ k } } } & = \lspan_{ \K } \of{ \pi_{ j } \of{ E_{ j } \cap \vect{ A }^{ \pm }_{ k } } } = \lspan_{ \K } \of{ \pi_{ j } \of{ E_{ j }' \cap \vect{ A }^{ \pm }_{ k } } } \\
& = \pi_{ j } \of{ \lspan_{ \K } \of{ E_{ j }' \cap \vect{ A }^{ \pm }_{ k } } } = \pi_{ j } \of{ U_{ j }' \cap \vect{ A }^{ \pm }_{ k } } .
\end{align*}
The computations above in (1) now imply that
\begin{align*}
\vect{ A }^{ \pm }_{ k } & = \lspan_{ \K } \of{ \of{ \bigsqcup_{ j = 1 }^{ m }{ E_{ j } } } \cap \vect{ A }^{ \pm }_{ k } } = \lspan_{ \K } \of{ \bigsqcup_{ j = 1 }^{ m }{ E_{ j } \cap \vect{ A }^{ \pm }_{ k } } } = \bigoplus_{ j = 1 }^{ m }{ \lspan_{ \K } \of{ E_{ j } \cap \vect{ A }^{ \pm }_{ k } } } \\
& \cong \bigoplus_{ j = 1 }^{ m }{ \pi_{ j } \of{ \lspan_{ \K } \of{ E_{ j } \cap \vect{ A }^{ \pm }_{ k } } } } = \bigoplus_{ j = 1 }^{ m }{ \pi_{ j } \of{ U_{ j }' \cap \vect{ A }^{ \pm }_{ k } } } = \bigoplus_{ j = 1 }^{ m }{ U_{ j } \cap \vect{ B }^{ \pm }_{ k } } = \lspan_{ \K } \of{ \bigsqcup_{ j = 1 }^{ m }{ \pi_{ j } \of{ E_{ j } } \cap \vect{ B }^{ \pm }_{ k } } } \\
& = \lspan_{ \K } \of{ \of{ \bigsqcup_{ j = 1 }^{ m }{ \pi_{ j } \of{ E_{ j } } } } \cap \vect{ B }^{ \pm }_{ k } } = \vect{ B }^{ \pm }_{ k } .
\end{align*}

\item[(3)]

The desired results clearly holds if $ k = 0 $ or $ k = \dim_{ \K } \of{ V } $, so we may assume that $ k \in \Delta_{ V } $. For each $ 1 \leq i \leq j $ and $ 1 \leq \ell \leq \dim_{ \K } \of{ U_{ i } } $, $ \mu_{ i , \ell } $ is an eigenvalue of $ \vect{ A } $, and so either $ \abs{ \mu_{ i , \ell } } \geq \lambda_{ k } \of{ \vect{ A } } $ or $ \abs{ \mu_{ i , \ell } } \leq \lambda_{ k + 1 } \of{ \vect{ A } } $ by the definition of eigenvalue magnitudes. Thus
\begin{align*}
E_{ j }' & = \set{ \vect{ e }_{ i , \ell } \in U_{ j }' : 1 \leq i \leq j \textrm{ and } 1 \leq \ell \leq \dim_{ \K } \of{ U_{ i } } } \\
& = \set{ \vect{ e }_{ i , \ell } \in U_{ j }' : 1 \leq i \leq j , 1 \leq \ell \leq \dim_{ \K } \of{ U_{ i } } , \textrm{ and } \abs{ \mu_{ j , \ell } } \geq \lambda_{ k } \of{ \vect{ A } } } \\
& \qquad \cup \set{ \vect{ e }_{ i , \ell } \in U_{ j }' : 1 \leq i \leq j , 1 \leq \ell \leq \dim_{ \K } \of{ U_{ i } } , \textrm{ and } \abs{ \mu_{ j , \ell } } \leq \lambda_{ k + 1 } \of{ \vect{ A } } } \\
& = \of{ E_{ j }' \cap \vect{ A }^{ + }_{ k } } \cup \of{ E_{ j }' \cap \vect{ A }^{ - }_{ \iota_{ V } \of{ k } } } .
\end{align*}
We conclude that
\begin{align*}
U_{ j }' & = \lspan_{ \K } \of{ E_{ j }' } = \lspan_{ \K } \of{ \of{ E_{ j }' \cap \vect{ A }^{ + }_{ k } } \cup \of{ E_{ j }' \cap \vect{ A }^{ - }_{ \iota_{ V } \of{ k } } } } \\
& = \lspan_{ \K } \of{ \lspan_{ \K } \of{ E_{ j }' \cap \vect{ A }^{ + }_{ k } } , \lspan_{ \K } \of{ E_{ j }' \cap \vect{ A }^{ - }_{ \iota_{ V } \of{ k } } } } = \lspan_{ \K } \of{ U_{ j }' \cap \vect{ A }^{ + }_{ k } , U_{ j }' \cap \vect{ A }^{ - }_{ \iota_{ V } \of{ k } } } .
\end{align*}

Entirely analogously, for each $ 1 \leq \ell \leq \dim_{ \K } \of{ U_{ j } } $, $ \mu_{ j , \ell } $ is an eigenvalue of $ \vect{ B } $, and so either $ \abs{ \mu_{ j , \ell } } \geq \lambda_{ k } \of{ \vect{ B } } $ or $ \abs{ \mu_{ j , \ell } } \leq \lambda_{ k + 1 } \of{ \vect{ B } } $. Thus
\begin{align*}
\pi_{ j } \of{ E_{ j } } & = \set{ \pi_{ j } \of{ \vect{ e }_{ j , \ell } } \in U_{ j } : 1 \leq \ell \leq \dim_{ \K } \of{ U_{ j } } } \\
& = \set{ \pi_{ j } \of{ \vect{ e }_{ j , \ell } } \in U_{ j } : 1 \leq \ell \leq \dim_{ \K } \of{ U_{ j } } \textrm{ and } \abs{ \mu_{ j , \ell } } \geq \lambda_{ k } \of{ \vect{ B } } } \\
& \qquad \cup \set{ \pi_{ j } \of{ \vect{ e }_{ j , \ell } } \in U_{ j } : 1 \leq \ell \leq \dim_{ \K } \of{ U_{ j } } \textrm{ and } \abs{ \mu_{ j , \ell } } \leq \lambda_{ k + 1 } \of{ \vect{ B } } } \\
& = \of{ \pi_{ j } \of{ E_{ j } } \cap \vect{ B }^{ + }_{ k } } \cup \of{ \pi_{ j } \of{ E_{ j } } \cap \vect{ B }^{ - }_{ \iota_{ V } \of{ k } } } .
\end{align*}
We conclude that
\begin{align*}
U_{ j } & = \lspan_{ \K } \of{ \pi_{ j } \of{ E_{ j } } } = \lspan_{ \K } \of{ \of{ \pi_{ j } \of{ E_{ j } } \cap \vect{ B }^{ + }_{ k } } \cup \of{ \pi_{ j } \of{ E_{ j } } \cap \vect{ B }^{ - }_{ \iota_{ V } \of{ k } } } } \\
& = \lspan_{ \K } \of{ \lspan_{ \K } \of{ \pi_{ j } \of{ E_{ j } } \cap \vect{ B }^{ + }_{ k } } , \lspan_{ \K } \of{ \pi_{ j } \of{ E_{ j } } \cap \vect{ B }^{ - }_{ \iota_{ V } \of{ k } } } } = \lspan_{ \K } \of{ U_{ j } \cap \vect{ B }^{ + }_{ k } , U_{ j } \cap \vect{ B }^{ - }_{ \iota_{ V } \of{ k } } } .
\end{align*}

\item[(4)]

We argued above that $ \lambda_{ k } \of{ \vect{ A } } = \lambda_{ k } \of{ \vect{ B } } $. The above computations in (1) imply that
\begin{align*}
\lambda_{ k } \of{ \vect{ B } } & = \min \of{ \set{ \abs{ \mu_{ i , \ell } } : 1 \leq i \leq m , 1 \leq \ell \leq \dim_{ \K } \of{ U_{ i } } , \textrm{ and } \abs{ \mu_{ i , \ell } } \geq \lambda_{ k } \of{ \vect{ B } } } } \\
& = \min_{ i }{ \min \of{ \set{ \abs{ \mu_{ i , \ell } } : 1 \leq \ell \leq \dim_{ \K } \of{ U_{ i } } \textrm{ and } \abs{ \mu_{ i , \ell } } \geq \lambda_{ k } \of{ \vect{ B } } } } } \\
& = \min_{ i }{ \min \of{ \set{ \abs{ \mu_{ i , \ell } } : 1 \leq \ell \leq \dim_{ \K } \of{ U_{ i } } \textrm{ and } \abs{ \mu_{ i , \ell } } \geq \lambda_{ \dim_{ \K } \of{ U_{ i } \cap \vect{ B }^{ + }_{ k } } } \of{ \vect{ B }_{ i } } } } } \\
& = \min_{ i }{ \lambda_{ \dim_{ \K } \of{ U_{ i } \cap \vect{ B }^{ + }_{ k } } } \of{ \vect{ B }_{ i } } } = \min_{ i }{ \lambda_{ \dim_{ \K } \of{ \pi_{ i } \of{ U_{ i }' \cap \vect{ A }^{ + }_{ k } } } } \of{ \vect{ B }_{ i } } } ,
\end{align*}
where the above minimum is indexed by all $ 1 \leq i \leq m $ so that
\[
\dim_{ \K } \of{ \pi_{ i } \of{ U_{ i }' \cap \vect{ A }^{ + }_{ k } } } = \dim_{ \K } \of{ U_{ i } \cap \vect{ B }^{ + }_{ k } } = \card{ \pi_{ i } \of{ E_{ i } } \cap \vect{ B }^{ + }_{ k } } > 0 .
\]
Similarly
\begin{align*}
\lambda_{ k } \of{ \vect{ B } } & = \max \of{ \set{ \abs{ \mu_{ j , \ell } } : 1 \leq j \leq m , 1 \leq \ell \leq \dim_{ \K } \of{ U_{ j } } , \textrm{ and } \abs{ \mu_{ j , \ell } } \leq \lambda_{ k } \of{ \vect{ B } } } } \\
& = \max_{ j }{ \max \of{ \set{ \abs{ \mu_{ j , \ell } } : 1 \leq \ell \leq \dim_{ \K } \of{ U_{ j } } \textrm{ and } \abs{ \mu_{ j , \ell } } \leq \lambda_{ k } \of{ \vect{ B } } } } } \\
& = \max_{ j }{ \max \of{ \set{ \abs{ \mu_{ j , \ell } } : 1 \leq \ell \leq \dim_{ \K } \of{ U_{ j } } \textrm{ and } \abs{ \mu_{ j , \ell } } \leq \lambda_{ \dim_{ \K } \of{ U_{ j } } - \dim_{ \K } \of{ U_{ j } \cap \vect{ B }^{ - }_{ \iota_{ V } \of{ k } + 1 } } + 1 } \of{ \vect{ B }_{ j } } } } } \\
& = \max_{ j }{ \lambda_{ \dim_{ \K } \of{ U_{ j } } - \dim_{ \K } \of{ U_{ j } \cap \vect{ B }^{ - }_{ \iota_{ V } \of{ k } + 1 } } + 1 } \of{ \vect{ B }_{ j } } } = \max_{ j }{ \lambda_{ \dim_{ \K } \of{ U_{ j } } - \dim_{ \K } \of{ U_{ j }' \cap \vect{ A }^{ - }_{ \iota_{ V } \of{ k } + 1 } } + 1 } \of{ \vect{ B }_{ j } } } ,
\end{align*}
where the above maximum is indexed by all $ 1 \leq j \leq m $ so that
\[
\dim_{ \K } \of{ \pi_{ j } \of{ U_{ j }' \cap \vect{ A }^{ - }_{ \iota_{ V } \of{ k } + 1 } } } = \dim_{ \K } \of{ U_{ j } \cap \vect{ B }^{ - }_{ \iota_{ V } \of{ k } + 1 } } = \card{ \pi_{ j } \of{ E_{ j } } \cap \vect{ B }^{ - }_{ \iota_{ V } \of{ k } + 1 } } > 0 .
\]

\item[(5)]

Recall that $ \of{ \vect{ B }_{ j } }^{ \pm }_{ \dim_{ \K } \of{ U_{ j } \cap \vect{ B }^{ \pm }_{ k } } } = U_{ j } \cap \vect{ B }^{ \pm }_{ k } $, as argued above in (1). If
\[
\dim_{ \K } \of{ U_{ j } \cap \vect{ B }^{ + }_{ k } } = \dim_{ \K } \of{ \pi_{ j } \of{ U_{ j }' \cap \vect{ A }^{ + }_{ k } } } \in \Delta_{ U_{ j } } ,
\]
then
\[
\dim_{ \K } \of{ \of{ \vect{ B }_{ j } }^{ + }_{ \dim_{ \K } \of{ U_{ j }' \cap \vect{ A }^{ + }_{ k } } } } = \dim_{ \K } \of{ \of{ \vect{ B }_{ j } }^{ + }_{ \dim_{ \K } \of{ U_{ j } \cap \vect{ B }^{ + }_{ k } } } } = \dim_{ \K } \of{ U_{ j } \cap \vect{ B }^{ + }_{ k } } = \dim_{ \K } \of{ U_{ j }' \cap \vect{ A }^{ + }_{ k } } ,
\]
and so $ \vect{ B }_{ j } $ is $ P_{ \dim_{ \K } \of{ U_{ j }' \cap \vect{ A }^{ + }_{ k } } } $-proximal by \autoref{fact:proximalityCharacterizations}.

Similarly, if
\[
\dim_{ \K } \of{ U_{ j } \cap \vect{ B }^{ - }_{ k } } = \dim_{ \K } \of{ \pi_{ j } \of{ U_{ j }' \cap \vect{ A }^{ - }_{ k } } } \in \Delta_{ U_{ j } } ,
\]
then
\begin{align*}
\dim_{ \K } \of{ \of{ \vect{ B }_{ j } }^{ - }_{ \iota_{ U_{ j } } \of{ \iota_{ U_{ j } } \of{ \dim_{ \K } \of{ U_{ j }' \cap \vect{ A }^{ - }_{ k } } } } } } & = \dim_{ \K } \of{ \of{ \vect{ B }_{ j } }^{ - }_{ \iota_{ U_{ j } } \of{ \iota_{ U_{ j } } \of{ \dim_{ \K } \of{ U_{ j } \cap \vect{ B }^{ - }_{ k } } } } } } \\
& = \dim_{ \K } \of{ \of{ \vect{ B }_{ j } }^{ - }_{ \dim_{ \K } \of{ U_{ j } \cap \vect{ B }^{ - }_{ k } } } } = \dim_{ \K } \of{ U_{ j } \cap \vect{ B }^{ - }_{ k } } \\
& = \dim_{ \K } \of{ U_{ j }' \cap \vect{ A }^{ - }_{ k } } = \iota_{ U_{ j } } \of{ \iota_{ U_{ j } } \of{ \dim_{ \K } \of{ U_{ j }' \cap \vect{ A }^{ - }_{ k } } } } ,
\end{align*}
and so $ \vect{ B }_{ j } $ is $ P_{ \iota_{ U_{ j } } \of{ \dim_{ \K } \of{ U_{ j }' \cap \vect{ A }^{ - }_{ k } } } } $-proximal by \autoref{fact:proximalityCharacterizations}. We conclude that $ \vect{ B }_{ j } $ is $ P_{ \theta_{ j } } $-biproximal by \autoref{fact:biproximality}. \qedhere

\end{enumerate}

\end{proof}

\end{proposition+}

The statements and proof of \autoref{prop:reducibleEigenvalues} above hint at the importance of the numbers
\[
\dim_{ \K } \of{ \pi_{ j } \of{ U_{ j }' \cap \vect{ A }^{ + }_{ k } } } = \dim_{ \K } \of{ U_{ j } \cap \vect{ B }^{ + }_{ k } }
\]
for a linear transformation $ \vect{ A } \colon V \to V $ which is block upper triangular relative to $ \mathscr{ U } $ and its block diagonalization $ \vect{ B } $. If such a linear transformation $ \vect{ A } $ is in the image of a reducible Anosov representation preserving the flag associated to $ \mathscr{ U } $, then these numbers play a vital role in computing the logarithmic eigenvalue gaps and Anosov limit maps. We formalize this role in the following definitions and in \autoref{thm:eigenvalueConfiguration}.

\begin{definition*}[Admissible family; structured flag]

A family $ Q = \of{ q_{ j , k } }_{ j , k } $ of integers $ 0 \leq q_{ j , k } \leq \dim_{ \K } \of{ U_{ j } } $ indexed by $ 1 \leq j \leq m $ and $ k \in \theta $ is called $ \of{ \mathscr{ U } , \theta } $-\emph{admissible} for a subset $ \theta \subseteq \Delta_{ V } $ if
\[
\sum_{ j = 1 }^{ m }{ q_{ j , k } } = k
\]
for all $ k \in \theta $.

Given such a $ \of{ \mathscr{ U } , \theta } $-admissible family $ Q = \of{ q_{ j , k } }_{ j , k } $, a flag $ F = \of{ F_{ k } }_{ k \in \theta } $ in $ V $ of signature $ \theta $ is called \emph{weakly $ \of{ \mathscr{ U } , \theta , Q } $-structured} if $ \dim_{ \K } \of{ \pi_{ j } \of{ U_{ j }' \cap F_{ k } } } = q_{ j , k } $ for all $ 1 \leq j \leq m $ and $ k \in \theta $. $ F $ is called \emph{strongly $ \of{ \mathscr{ U } , \theta , Q } $-structured} if $ \dim_{ \K } \of{ U_{ j } \cap F_{ k } } = q_{ j , k } $ for all $ 1 \leq j \leq m $ and $ k \in \theta $. If $ F $ is strongly $ \of{ \mathscr{ U } , \theta , Q } $-structured, then
\[
F_{ k } = \bigoplus_{ j = 1 }^{ m }{ U_{ j } \cap F_{ k } }
\]
for all $ k \in \theta $, so that $ F $ is also weakly $ \of{ \mathscr{ U } , \theta , Q } $-structured. We denote by $ W^{ \mathscr{ U } , \theta }_{ Q } \of{ V } $ and $ S^{ \mathscr{ U } , \theta }_{ Q } \of{ V } $ the closed subsets of the flag variety $ \Flag_{ \theta } \of{ V } $ consisting of weakly and strongly $ \of{ \mathscr{ U } , \theta , Q } $-structured flags, respectively.

\end{definition*}

\begin{lemma+} \label{lem:nonEmptyAdmissibleCondition}

Fix a subset $ \theta \subseteq \Delta_{ V } $ and a $ \of{ \mathscr{ U } , \theta } $-admissible family $ Q = \of{ q_{ j , k } }_{ j , k } $. For each $ k \in \theta $, there are $ 1 \leq i , j \leq m $ so that $ q_{ i , k } > 0 $ and $ q_{ j , k } < \dim_{ \K } \of{ U_{ j } } $.

\begin{proof}

Recall that $ \Delta_{ V } = \set{ 1 , \dotsc , \dim_{ \K } \of{ V } - 1 } $, so that $ 0 < k < \dim_{ \K } \of{ V } $. If $ q_{ i , k } = 0 $ for all $ 1 \leq i \leq m $, then
\[
k = \sum_{ i = 1 }^{ m }{ q_{ i , k } } = \sum_{ i = 1 }^{ m }{ 0 } = 0 .
\]
On the other hand, if $ q_{ j , k } = \dim_{ \K } \of{ U_{ j } } $ for all $ 1 \leq j \leq m $, then
\[
k = \sum_{ j = 1 }^{ m }{ q_{ j , k } } = \sum_{ j = 1 }^{ m }{ \dim_{ \K } \of{ U_{ j } } } = \dim_{ \K } \of{ \bigoplus_{ j = 1 }^{ m }{ U_{ j } } } = \dim_{ \K } \of{ V } .
\]
Thus there are $ 1 \leq i , j \leq m $ so that $ q_{ i , k } > 0 $ and $ q_{ j , k } < \dim_{ \K } \of{ U_{ j } } $. \qedhere

\end{proof}

\end{lemma+}

Equipped with this new language of admissible families and structured flags, we derive the following corollary of \autoref{prop:reducibleEigenvalues} about the proximality of reducible linear transformations:

\begin{corollary+} \label{cor:reducibleProximality}

Fix a linear transformation $ \vect{ A } \colon V \to V $ which is block upper triangular relative to $ \mathscr{ U } $, and consider its block diagonalization $ \vect{ B } = \bigoplus_{ j = 1 }^{ m }{ \vect{ B }_{ j } } $. For any subset $ \theta \subseteq \Delta_{ V } $, $ \vect{ A } $ is $ P_{ \theta } $-proximal if and only if
\[
U_{ j } = \pi_{ j } \of{ U_{ j }' \cap \vect{ A }^{ + }_{ k } } \oplus \pi_{ j } \of{ U_{ j }' \cap \vect{ A }^{ - }_{ \iota_{ V } \of{ k } } }
\]
for all $ 1 \leq j \leq m $ and $ k \in \theta $, in which case the following also hold:
\begin{enumerate}

\item[(1)]

The family $ Q = \of{ q_{ j , k } }_{ j , k } $ indexed by $ 1 \leq j \leq m $ and $ k \in \theta $ and defined by the formula
\[
q_{ j , k } = \dim_{ \K } \of{ \pi_{ j } \of{ U_{ j }' \cap \vect{ A }^{ + }_{ k } } } = \dim_{ \K } \of{ U_{ j } \cap \vect{ B }^{ + }_{ k } }
\]
is a $ \of{ \mathscr{ U } , \theta } $-admissible family called the \emph{large eigenvalue $ \theta $-configuration} of $ \vect{ A } $ relative to $ \mathscr{ U } $.

\item[(2)]

The attracting $ \theta $-flags $ \vect{ A }^{ + }_{ \theta } $ and $ \vect{ B }^{ + }_{ \theta } $ are weakly and strongly $ \of{ \mathscr{ U } , \theta , Q } $-structured, respectively.

\item[(3)]

$ \vect{ A } $ and $ \vect{ B } $ have logarithmic eigenvalue gaps
\begin{align*}
\log \of{ \frac{ \lambda_{ k } \of{ \vect{ A } } }{ \lambda_{ k + 1 } \of{ \vect{ A } } } } = \log \of{ \frac{ \lambda_{ k } \of{ \vect{ B } } }{ \lambda_{ k + 1 } \of{ \vect{ B } } } } = \min_{ i , j }{ \log \of{ \frac{ \lambda_{ q_{ i , k } } \of{ \vect{ B }_{ i } } }{ \lambda_{ q_{ j , k } + 1 } \of{ \vect{ B }_{ j } } } } } 
\end{align*}
for each $ k \in \Delta_{ V } $, where the above minimum is indexed by all $ 1 \leq i , j \leq m $ so that $ q_{ i , k } > 0 $ and $ q_{ j , k } < \dim_{ \K } \of{ U_{ j } } $.

\end{enumerate}

\begin{proof}

Suppose first that $ U_{ j } = \pi_{ j } \of{ U_{ j }' \cap \vect{ A }^{ + }_{ k } } \oplus \pi_{ j } \of{ U_{ j }' \cap \vect{ A }^{ - }_{ \iota_{ V } \of{ k } } } $ for all $ 1 \leq j \leq m $ and $ k \in \theta $. One can check from the computations in the above proof of \autoref{prop:reducibleEigenvalues} that
\begin{align*}
\vect{ B }^{ + }_{ k } \cap \vect{ B }^{ - }_{ \iota_{ V } \of{ k } } & = \bigoplus_{ j = 1 }^{ m }{ U_{ j } \cap \of{ \vect{ B }^{ + }_{ k } \cap \vect{ B }^{ - }_{ \iota_{ V } \of{ k } } } } = \bigoplus_{ j = 1 }^{ m }{ \of{ U_{ j } \cap \vect{ B }^{ + }_{ k } } \cap \of{ U_{ j } \cap \vect{ B }^{ - }_{ \iota_{ V } \of{ k } } } } \\
& = \bigoplus_{ j = 1 }^{ m }{ \pi_{ j } \of{ U_{ j }' \cap \vect{ A }^{ + }_{ k } } \cap \pi_{ j } \of{ U_{ j }' \cap \vect{ A }^{ - }_{ \iota_{ V } \of{ k } } } } = \bigoplus_{ j = 1 }^{ m }{ \set{ \vect{ 0 } } } = \set{ \vect{ 0 } } ,
\end{align*}
so that \autoref{fact:proximalityCharacterizations} implies that $ \vect{ B } $ is $ P_{ \theta } $-proximal. Since $ \vect{ A } $ and $ \vect{ B } $ have the same eigenvalue magnitudes, this implies that $ \vect{ A } $ is $ P_{ \theta } $-proximal.

Conversely, now suppose that $ \vect{ A } $ (and therefore $ \vect{ B } $) is $ P_{ \theta } $-proximal, and fix $ k \in \theta $. (1) and (2) in \autoref{prop:reducibleEigenvalues} imply that
\[
U_{ j } = \lspan_{ \K } \of{ \pi_{ j } \of{ U_{ j }' \cap \vect{ A }^{ + }_{ k } } , \pi_{ j } \of{ U_{ j }' \cap \vect{ A }^{ - }_{ \iota_{ V } \of{ k } } } } = \lspan_{ \K } \of{ U_{ j } \cap \vect{ B }^{ + }_{ k } , U_{ j } \cap \vect{ B }^{ - }_{ \iota_{ V } \of{ k } } }
\]
for all $ 1 \leq j \leq m $. On the other hand, \autoref{fact:proximalityCharacterizations} and (1) in \autoref{prop:reducibleEigenvalues} imply that
\[
\pi_{ j } \of{ U_{ j }' \cap \vect{ A }^{ + }_{ k } } \cap \pi_{ j } \of{ U_{ j }' \cap \vect{ A }^{ - }_{ \iota_{ V } \of{ k } } } = \of{ U_{ j } \cap \vect{ B }^{ + }_{ k } } \cap \of{ U_{ j } \cap \vect{ B }^{ - }_{ \iota_{ V } \of{ k } } } = U_{ j } \cap \vect{ B }^{ + }_{ k } \cap \vect{ B }^{ - }_{ \iota_{ V } \of{ k } } = U_{ j } \cap \set{ \vect{ 0 } } = \set{ \vect{ 0 } } .
\]
Thus $ U_{ j } = \pi_{ j } \of{ U_{ j }' \cap \vect{ A }^{ + }_{ k } } \oplus \pi_{ j } \of{ U_{ j }' \cap \vect{ A }^{ - }_{ \iota_{ V } \of{ k } } } $.

\begin{enumerate}

\item[(1)]

The computations presented above in the proof of \autoref{prop:reducibleEigenvalues} imply that
\begin{align*}
\sum_{ j = 1 }^{ m }{ q_{ j , k } } & = \sum_{ j = 1 }^{ m }{ \dim_{ \K } \of{ \pi_{ j } \of{ U_{ j }' \cap \vect{ A }^{ + }_{ k } } } } = \sum_{ j = 1 }^{ m }{ \dim_{ \K } \of{ U_{ j } \cap \vect{ B }^{ + }_{ k } } } = \dim_{ \K } \of{ \bigoplus_{ j = 1 }^{ m }{ U_{ j } \cap \vect{ B }^{ + }_{ k } } } = \dim_{ \K } \of{ \vect{ B }^{ + }_{ k } } = k
\end{align*}
for all $ k \in \theta $, and so $ Q = \of{ q_{ j , k } }_{ j , k } $ is $ \of{ \mathscr{ U } , \theta } $-admissible.

\item[(2)]

Note that $ q_{ j , k } = \dim_{ \K } \of{ \pi_{ j } \of{ U_{ j }' \cap \vect{ A }^{ + }_{ k } } } = \dim_{ \K } \of{ U_{ j } \cap \vect{ B }^{ + }_{ k } } $ for all $ 1 \leq j \leq m $ and $ k \in \theta $ by definition, so that $ \vect{ A }^{ + }_{ \theta } = \of{ \vect{ A }^{ + }_{ k } }_{ k \in \theta } $ and $ \vect{ B }^{ + }_{ \theta } = \of{ \vect{ B }^{ + }_{ k } }_{ k \in \theta } $ are weakly and strongly $ \of{ \mathscr{ U } , \theta , Q } $-structured, respectively.

\item[(3)]

Recall that
\begin{align*}
\dim_{ \K } \of{ U_{ j } } & = \dim_{ \K } \of{ \pi_{ j } \of{ U_{ j }' \cap \vect{ A }^{ + }_{ k } } \oplus \pi_{ j } \of{ U_{ j }' \cap \vect{ A }^{ - }_{ \iota_{ V } \of{ k } } } } = \dim_{ \K } \of{ \pi_{ j } \of{ U_{ j }' \cap \vect{ A }^{ + }_{ k } } } + \dim_{ \K } \of{ \pi_{ j } \of{ U_{ j }' \cap \vect{ A }^{ - }_{ \iota_{ V } \of{ k } } } } \\
& = q_{ j , k } + \dim_{ \K } \of{ \pi_{ j } \of{ U_{ j }' \cap \vect{ A }^{ - }_{ \iota_{ V } \of{ k } } } } .
\end{align*}
To derive the desired formula for the $ k $th logarithmic eigenvalue gap of $ \vect{ A } $, we further observe that (4) in \autoref{prop:reducibleEigenvalues} implies that $ \vect{ A } $ and $ \vect{ B } $ have eigenvalue magnitudes
\[
\lambda_{ k } \of{ \vect{ A } } = \lambda_{ k } \of{ \vect{ B } } = \min_{ \dim_{ \K } \of{ \pi_{ i } \of{ U_{ i }' \cap \vect{ A }^{ + }_{ k } } } > 0 }{ \lambda_{ \dim_{ \K } \of{ \pi_{ i } \of{ U_{ i }' \cap \vect{ A }^{ + }_{ k } } } } \of{ \vect{ B }_{ i } } } = \min_{ q_{ i , k } > 0 }{ \lambda_{ q_{ i , k } } \of{ \vect{ B }_{ i } } }
\]
and
\begin{align*}
\lambda_{ k + 1 } \of{ \vect{ A } } & = \lambda_{ k + 1 } \of{ \vect{ B } } = \max_{ \dim_{ \K } \of{ \pi_{ j } \of{ U_{ j }' \cap \vect{ A }^{ - }_{ \iota_{ V } \of{ k } } } } < \dim_{ \K } \of{ U_{ j } } }{ \lambda_{ \dim_{ \K } \of{ U_{ j } } - \dim_{ \K } \of{ U_{ j }' \cap \vect{ A }^{ - }_{ \iota_{ V } \of{ k } } } + 1 } \of{ \vect{ B }_{ j } } } \\
& = \max_{ q_{ j , k } < \dim_{ \K } \of{ U_{ j }' \cap \vect{ A }^{ + }_{ k } } }{ \lambda_{ q_{ j , k } + 1 } \of{ \vect{ B }_{ j } } } .
\end{align*}
Thus $ \vect{ A } $ and $ \vect{ B } $ have logarithmic eigenvalue gaps
\[
\log \of{ \frac{ \lambda_{ k } \of{ \vect{ A } } }{ \lambda_{ k + 1 } \of{ \vect{ A } } } } = \log \of{ \frac{ \lambda_{ k } \of{ \vect{ B } } }{ \lambda_{ k + 1 } \of{ \vect{ B } } } } = \min_{ i , j }{ \log \of{ \frac{ \lambda_{ q_{ i , k } } \of{ \vect{ B }_{ i } } }{ \lambda_{ q_{ j , k } + 1 } \of{ \vect{ B }_{ j } } } } }
\]
for all $ k \in \Delta_{ V } $, where the above minimum is indexed by all $ 1 \leq i , j \leq m $ so that $ q_{ i , k } > 0 $ and $ q_{ j , k } < \dim_{ \K } \of{ U_{ j } } $. \qedhere

\end{enumerate}

\end{proof}

\end{corollary+}

\subsection{Eigenvalues of Reducible Representations}

Armed with the tools and terminology developed above in \autoref{prop:reducibleEigenvalues} and \autoref{cor:reducibleProximality}, we are now ready to prove \autoref{thm:newEigenvalueConfiguration}. We will prove a more general result, applicable to all block upper triangular representations; \autoref{thm:newEigenvalueConfiguration} is recovered from \autoref{thm:eigenvalueConfiguration} as an immediate corollary for block diagonal representations.

\begin{theorem+} \label{thm:eigenvalueConfiguration}

Fix a representation $ \rho \colon \Gamma \to \GL \of{ V } $ and a subset $ \theta \subseteq \Delta_{ V } $. If $ \rho $ is block upper triangular relative to $ \mathscr{ U } $, with block diagonalization $ \eta = \bigoplus_{ j = 1 }^{ m }{ \eta_{ j } } $, then $ \rho $ is $ P_{ \theta } $-Anosov if and only if there is a $ \of{ \mathscr{ U } , \theta } $-admissible family $ Q = \of{ q_{ j , k } }_{ j , k } $ so that
\[
\log \of{ \frac{ \lambda_{ q_{ i , k } } \of{ \eta_{ i } \of{ \gamma } } }{ \lambda_{ q_{ j , k } + 1 } \of{ \eta_{ j } \of{ \gamma } } } }
\]
grows at least linearly in translation length $ \norm{ \gamma }_{ \Sigma } $ for all $ k \in \theta $ and $ 1 \leq i , j \leq m $ so that $ q_{ i , k } > 0 $ and $ q_{ j , k } < \dim_{ \K } \of{ U_{ j } } $. In this case, $ Q $ is the unique $ \of{ \mathscr{ U } , \theta } $-admissible family with the above property, and the following also hold:
\begin{enumerate}

\item[(1)]

The $ P_{ \theta } $-Anosov limit sets $ \xi_{ \rho }^{ \theta } \of{ \partial \Gamma } , \xi_{ \eta }^{ \theta } \of{ \partial \Gamma } \subseteq \Flag_{ \theta } \of{ V } $ consist entirely of weakly $ \of{ \mathscr{ U } , \theta , Q } $-structured and strongly $ \of{ \mathscr{ U } , \theta , Q } $-structured flags, respectively; that is,
\[
q_{ j , k } = \dim_{ \K } \of{ \pi_{ j } \of{ U_{ j }' \cap \xi_{ \rho }^{ k } \of{ z } } } = \dim_{ \K } \of{ U_{ j } \cap \xi_{ \eta }^{ k } \of{ z } }
\]
for all $ 1 \leq j \leq m $, $ k \in \theta $, and $ z \in \partial \Gamma $.

\item[(2)]

For each $ 1 \leq j \leq m $, let $ \theta_{ j } = \set{ q_{ j , k } }_{ k \in \theta } \cap \Delta_{ U_{ j } } $. The $ j $th block $ \eta_{ j } \colon \Gamma \to \GL \of{ U_{ j } } $ of $ \eta $ is $ P_{ \theta_{ j } } $-Anosov, with $ P_{ \theta_{ j } } $-Anosov limit map $ \xi_{ \eta_{ j } }^{ \theta_{ j } } \colon \partial \Gamma \to \Flag_{ \theta_{ j } } \of{ U_{ j } } $ given by the formulas
\[
\xi_{ \eta_{ j } }^{ \theta_{ j } } \of{ z } = \of{ \pi_{ j } \of{ U_{ j }' \cap \xi_{ \rho }^{ k_{ q } } \of{ z } } }_{ q \in \theta_{ j } } = \of{ U_{ j } \cap \xi_{ \eta }^{ k_{ q } } \of{ z } }_{ q \in \theta_{ j } } ,
\]
where for each $ q \in \theta_{ j } $, we choose any $ k_{ q } \in \Delta_{ V } $ so that $ q_{ j , k_{ q } } = q $.

\item[(3)]

The block diagonal representation $ \eta $ has $ P_{ \theta } $-Anosov limit map $ \xi_{ \eta }^{ \theta } \colon \partial \Gamma \to \Flag_{ \theta } \of{ V } $ given by the formulas
\[
\xi_{ \eta }^{ \theta } \of{ z } = \of{ \bigoplus_{ j = 1 }^{ m }{ \pi_{ j } \of{ U_{ j }' \cap \xi_{ \rho }^{ k } \of{ z } } } }_{ k \in \theta } = \of{ \bigoplus_{ j = 1 }^{ m }{ U_{ j } \cap \xi_{ \eta }^{ k } \of{ z } } }_{ k \in \theta } = \of{ \bigoplus_{ q_{ i , k } > 0 }{ \xi_{ \eta_{ i } }^{ q_{ i , k } } \of{ z } } }_{ k \in \theta } .
\]

\end{enumerate}

\end{theorem+}

\begin{proof}

Suppose first that $ \rho $ is $ P_{ \theta } $-Anosov, and recall that $ \rho $ is $ P_{ \theta } $-proximal, in that $ \rho \of{ \gamma } $ is $ P_{ \theta } $-biproximal for all infinite order $ \gamma \in \Gamma $. Fix one such infinite order $ \gamma_{ 0 } \in \Gamma $, and denote by $ Q = \of{ q_{ j , k } }_{ j , k } $ the large eigenvalue $ \theta $-configuration relative to $ \mathscr{ U } $; that is,
\[
q_{ j , k } = \dim_{ \K } \of{ \pi_{ j } \of{ U_{ j }' \cap \rho \of{ \gamma_{ 0 } }^{ + }_{ k } } } = \dim_{ \K } \of{ U_{ j } \cap \eta \of{ \gamma_{ 0 } }^{ + }_{ k } }
\]
for all $ 1 \leq j \leq m $ and $ k \in \theta $. Since $ \rho \of{ \gamma_{ 0 } } $ is $ P_{ \theta } $-proximal, (1) and (2) in \autoref{cor:reducibleProximality} implies that $ Q $ is $ \of{ \mathscr{ U } , \theta } $-admissible and that $ \rho \of{ \gamma_{ 0 } }^{ + }_{ \theta } \in W^{ \mathscr{ U } , \theta }_{ Q } \of{ V } $ and $ \eta \of{ \gamma_{ 0 } }^{ + }_{ \theta } \in S^{ \mathscr{ U } , \theta }_{ Q } \of{ V } $ are weakly and strongly $ \of{ \mathscr{ U } , \theta , Q } $-structured, respectively.

Recall that the $ P_{ \theta } $-Anosov limit maps $ \xi_{ \rho }^{ \theta } , \xi_{ \eta }^{ \theta } \colon \partial \Gamma \to \Flag_{ \theta } \of{ V } $ are continuous, $ \Gamma $-equivariant, and $ P_{ \theta } $-dynamics preserving, in the sense that $ \xi_{ \rho }^{ \theta } \of{ \delta^{ \pm } } = \rho \of{ \delta }^{ \pm }_{ \theta } $ and $ \xi_{ \eta }^{ \theta } \of{ \delta^{ \pm } } = \eta \of{ \delta }^{ \pm }_{ \theta } $ for all infinite order $ \delta \in \Gamma $. Since $ \rho $ preserves the flag associated to $ \mathscr{ U } $,
\begin{align*}
\dim_{ \K } \of{ \pi_{ j } \of{ U_{ j }' \cap \xi_{ \rho }^{ k } \of{ \gamma \cdot \gamma_{ 0 }^{ + } } } } & = \dim_{ \K } \of{ \pi_{ j } \of{ \rho \of{ \gamma } \of{ U_{ j }' } \cap \rho \of{ \gamma } \of{ \xi_{ \rho }^{ k } \of{ \gamma_{ 0 }^{ + } } } } } \\
& = \dim_{ \K } \of{ \pi_{ j } \of{ \rho \of{ \gamma } \of{ U_{ j }' \cap \rho \of{ \gamma_{ 0 } }^{ + }_{ k } } } } = \dim_{ \K } \of{ \pi_{ j } \of{ U_{ j }' \cap \rho \of{ \gamma_{ 0 } }^{ + }_{ k } } } = q_{ j , k } \\
\end{align*}
for all $ 1 \leq j \leq m $, $ k \in \theta $, and $ \gamma \in \Gamma $. Similarly, since $ \eta $ preserves $ \mathscr{ U } $ itself,
\begin{align*}
\dim_{ \K } \of{ U_{ j } \cap \xi_{ \eta }^{ k } \of{ \gamma \cdot \gamma_{ 0 }^{ + } } } & = \dim_{ \K } \of{ \eta \of{ \gamma } \of{ U_{ j } } \cap \eta \of{ \gamma } \of{ \xi_{ \eta }^{ k } \of{ \gamma_{ 0 }^{ + } } } } \\
& = \dim_{ \K } \of{ \eta \of{ \gamma } \of{ U_{ j } \cap \eta \of{ \gamma_{ 0 } }^{ + }_{ k } } } = \dim_{ \K } \of{ U_{ j } \cap \eta \of{ \gamma_{ 0 } }^{ + }_{ k } } = q_{ j , k }
\end{align*}
for all $ 1 \leq j \leq m $, $ k \in \theta $, and $ \gamma \in \Gamma $. Since $ \Gamma $ is a non-elementary word hyperbolic group, the orbit of any attracting ideal point is dense in the Gromov boundary $ \partial \Gamma $, and so
\begin{align*}
\xi_{ \rho }^{ \theta } \of{ \partial \Gamma } & = \xi_{ \rho }^{ \theta } \of{ \overline{ \Gamma \cdot \gamma_{ 0 }^{ + } } } \subseteq \overline{ \xi_{ \rho }^{ \theta } \of{ \Gamma \cdot \gamma_{ 0 }^{ + } } } \subseteq \overline{ W^{ U, \theta }_{ Q } \of{ V } } = W^{ \mathscr{ U } , \theta }_{ Q } \of{ V } \\
\xi_{ \eta }^{ \theta } \of{ \partial \Gamma } & = \xi_{ \eta }^{ \theta } \of{ \overline{ \Gamma \cdot \gamma_{ 0 }^{ + } } } \subseteq \overline{ \xi_{ \eta }^{ \theta } \of{ \Gamma \cdot \gamma_{ 0 }^{ + } } } \subseteq \overline{ S^{ U, \theta }_{ Q } \of{ V } } = S^{ \mathscr{ U } , \theta }_{ Q } \of{ V } .
\end{align*}
In particular, we conclude that $ \dim_{ \K } \of{ \pi_{ j } \of{ U_{ j }' \cap \xi_{ \rho }^{ k } \of{ z } } } = \dim_{ \K } \of{ U_{ j } \cap \xi_{ \eta }^{ k } \of{ z } } = q_{ j , k } $ for all $ 1 \leq j \leq m $, $ k \in \theta $, and $ z \in \partial \Gamma $. (3) in \autoref{cor:reducibleProximality} now implies that
\[
\log \of{ \frac{ \lambda_{ k } \of{ \rho \of{ \gamma } } }{ \lambda_{ k + 1 } \of{ \rho \of{ \gamma } } } } = \log \of{ \frac{ \lambda_{ k } \of{ \eta \of{ \gamma } } }{ \lambda_{ k + 1 } \of{ \eta \of{ \gamma } } } } = \min_{ i , j }{ \log \of{ \frac{ \lambda_{ q_{ i , k } } \of{ \eta_{ i } \of{ \gamma } } }{ \lambda_{ q_{ j , k } + 1 } \of{ \eta_{ j } \of{ \gamma } } } } }
\]
for all infinite order $ \gamma \in \Gamma $, where the above minimum is indexed by all $ 1 \leq i , j \leq m $ so that $ q_{ i , k } > 0 $ and $ q_{ j , k } < \dim_{ \K } \of{ U_{ j } } $. We note that the above equation holds for all finite order $ \gamma \in \Gamma $ as well; in this case, all eigenvalues of $ \rho \of{ \gamma } $ and $ \eta \of{ \gamma } $ are roots of unity, so that both sides of the above equation vanish. In particular,
\[
\log \of{ \frac{ \lambda_{ q_{ i , k } } \of{ \eta_{ i } \of{ \gamma } } }{ \lambda_{ q_{ j , k } + 1 } \of{ \eta_{ j } \of{ \gamma } } } } \geq \log \of{ \frac{ \lambda_{ k } \of{ \rho \of{ \gamma } } }{ \lambda_{ k + 1 } \of{ \rho \of{ \gamma } } } }
\]
grows at least linearly in translation length $ \norm{ \gamma }_{ \Sigma } $ for all $ 1 \leq i , j \leq m $ and $ k \in \theta $ so that $ q_{ i , k } > 0 $ and $ q_{ j , k } < \dim_{ \K } \of{ U_{ j } } $, as desired.

Conversely, now suppose that there is a $ \theta $-admissible family $ Q = \of{ q_{ j , k } }_{ j , k } $ so that
\[
\log \of{ \frac{ \lambda_{ q_{ i , k } } \of{ \eta_{ i } \of{ \gamma } } }{ \lambda_{ q_{ j , k } + 1 } \of{ \eta_{ j } \of{ \gamma } } } }
\]
grows at least linearly in translation length $ \norm{ \gamma }_{ \Sigma } $ for all $ 1 \leq i , j \leq m $ and $ k \in \theta $ so that $ q_{ i , k } > 0 $ and $ q_{ j , k } < \dim_{ \K } \of{ U_{ j } } $. For a given infinite order $ \gamma \in \Gamma $, note that for all $ 1 \leq i , j \leq m $ and $ k \in \theta $ so that $ q_{ i , k } > 0 $ and $ q_{ j , k } < \dim_{ \K } \of{ U_{ j } } $,
\begin{align*}
\lim_{ n \to \infty }{ n \log \of{ \frac{ \lambda_{ q_{ i , k } } \of{ \eta_{ i } \of{ \gamma } } }{ \lambda_{ q_{ j , k } + 1 } \of{ \eta_{ j } \of{ \gamma } } } } } & = \lim_{ n \to \infty }{ \log \of{ \of{ \frac{ \lambda_{ q_{ i , k } } \of{ \eta_{ i } \of{ \gamma } } }{ \lambda_{ q_{ j , k } + 1 } \of{ \eta_{ j } \of{ \gamma } } } }^{ n } } } = \lim_{ n \to \infty }{ \log \of{ \frac{ \lambda_{ q_{ i , j } } \of{ \eta_{ i } \of{ \gamma^{ n } } } }{ \lambda_{ q_{ j , k } + 1 } \of{ \eta_{ j } \of{ \gamma^{ n } } } } } } = \infty ,
\end{align*}
so that $ \lambda_{ q_{ i , k } } \of{ \eta_{ i } \of{ \gamma } } > \lambda_{ q_{ j , k } + 1 } \of{ \eta_{ j } \of{ \gamma } } $. We conclude that
\[
\log \of{ \frac{ \lambda_{ k } \of{ \rho \of{ \gamma } } }{ \lambda_{ k + 1 } \of{ \rho \of{ \gamma } } } } = \min_{ i , j }{ \log \of{ \frac{ \lambda_{ q_{ i , k } } \of{ \eta_{ i } \of{ \gamma } } }{ \lambda_{ q_{ j , k } + 1 } \of{ \eta_{ j } \of{ \gamma } } } } }
\]
for all such $ i $, $ j $, and $ k $. We again see that the above equation holds for all finite order $ \gamma \in \Gamma $ as well. Thus $ \log \of{ \frac{ \lambda_{ k } \of{ \rho \of{ \gamma } } }{ \lambda_{ k + 1 } \of{ \rho \of{ \gamma } } } } $ grows at least linearly in $ \norm{ \gamma }_{ \Sigma } $ for all $ k \in \theta $; that is, $ \rho $ is $ P_{ \theta } $-Anosov.

\begin{enumerate}

\item[(1)]

This was already proven in the above argument.

\item[(2)]

Note that
\[
\log \of{ \frac{ \lambda_{ q_{ j , k } } \of{ \eta_{ j } \of{ \gamma } } }{ \lambda_{ q_{ j , k } + 1 } \of{ \eta_{ j } \of{ \gamma } } } } \geq \log \of{ \frac{ \lambda_{ k } \of{ \rho \of{ \gamma } } }{ \lambda_{ k + 1 } \of{ \rho \of{ \gamma } } } }
\]
grows at least linearly in translation length $ \norm{ \gamma }_{ \Sigma } $ for all $ 1 \leq j \leq m $ and $ k \in \theta $ so that $ 0 < q_{ j , k } < \dim_{ \K } \of{ U_{ j } } $. Thus $ \eta_{ j } $ is $ P_{ \theta_{ j } } $-Anosov for all $ 1 \leq j \leq m $. The formulas
\begin{align*}
\xi_{ \eta_{ j } }^{ \theta_{ j } } \of{ z } & = \of{ \pi_{ j } \of{ U_{ j }' \cap \xi_{ \rho }^{ k_{ q } } \of{ z } } }_{ q \in \theta_{ j } } = \of{ U_{ j } \cap \xi_{ \eta }^{ k_{ q } } \of{ z } }_{ q \in \theta_{ j } }
\end{align*}
follow immediately from (1), specifically the observation that $ \xi_{ \eta }^{ \theta } \of{ z } $ is strongly $ \of{ \mathscr{ U } , \theta , Q } $-structured for all $ z \in \partial \Gamma $.

\item[(3)]

As in (2), the formulas
\[
\xi_{ \eta }^{ \theta } \of{ z } = \of{ \bigoplus_{ j = 1 }^{ m }{ \pi_{ j } \of{ U_{ j }' \cap \xi_{ \rho }^{ k } \of{ z } } } }_{ k \in \theta } = \of{ \bigoplus_{ j = 1 }^{ m }{ U_{ j } \cap \xi_{ \eta }^{ k } \of{ z } } }_{ k \in \theta } = \of{ \bigoplus_{ q_{ i , k } > 0 }{ \xi_{ \eta_{ i } }^{ q_{ i , k } } \of{ z } } }_{ k \in \theta }
\]
follow immediately from (1) and (2), specifically the observations that $ \xi_{ \rho }^{ \theta } \of{ z } $ and $ \xi_{ \eta }^{ \theta } \of{ z } $ are weakly and strongly $ \of{ \mathscr{ U } , \theta , Q } $-structured, respectively, for all $ z \in \partial \Gamma $. \qedhere

\end{enumerate}

\end{proof}

We remark that our standing hypothesis that $ \Gamma $ is non-elementary is necessary for \autoref{thm:eigenvalueConfiguration}. For a counterexample, we consider the elementary hyperbolic group $ \Z $, whose Gromov boundary is identified in the usual way with $ \set{ \pm \infty } $. Fix an integer $ 1 \leq k < \frac{ \dim_{ \K } \of{ V } }{ 2 } $ and a $ P_{ k } $-biproximal linear transformation $ \vect{ A } \in \GL \of{ V } $. Since $ \vect{ A } $ is $ P_{ k } $-biproximal, the representation $ \rho \colon \Z \to \GL \of{ V } $ defined by the formula $ \rho \of{ n } = \vect{ A }^{ n } $ is $ P_{ k } $-Anosov, with $ P_{ k } $-Anosov limit map $ \xi_{ \rho }^{ k } \colon \set{ \pm \infty } \to \Gr_{ k } \of{ V } $ given by $ \xi_{ \rho }^{ k } \of{ \pm \infty } = \vect{ A }^{ \pm }_{ k } $. Note that $ \rho $ preserves the direct sum decomposition $ V = \vect{ A }^{ + }_{ k } \oplus \vect{ A }^{ - }_{ \iota_{ V } \of{ k } } $, but
\begin{align*}
\dim_{ \K } \of{ \vect{ A }^{ + }_{ k } \cap \xi_{ \rho }^{ k } \of{ +\infty } } & = \dim_{ \K } \of{ \vect{ A }^{ + }_{ k } \cap \vect{ A }^{ + }_{ k } } = \dim_{ \K } \of{ \vect{ A }^{ + }_{ k } } = k \\\
& > 0 = \dim_{ \K } \of{ \set{ \vect{ 0 } } } = \dim_{ \K } \of{ \vect{ A }^{ + }_{ k } \cap \vect{ A }^{ - }_{ \iota_{ V } \of{ k } } } = \dim_{ \K } \of{ \vect{ A }^{ + }_{ k } \cap \xi_{ \rho }^{ k } \of{ -\infty } } .
\end{align*}
Thus $ \rho $ is $ P_{ k } $-Anosov but does not have consistent large eigenvalue $ \set{ k } $-configurations relative to the direct sum decomposition $ V = \vect{ A }^{ + }_{ k } \oplus \vect{ A }^{ - }_{ \iota_{ V } \of{ k } } $.

We end this section with a proof of a corollary of \autoref{thm:eigenvalueConfiguration}, the main content of which is that for a block upper triangular Anosov representation, no block can have too many or too few associated large eigenvalues.

\begin{corollary+} \label{cor:halfLargeEigenvalues}

Fix a subset $ \theta \subseteq \Delta_{ V } $ and consider a $ P_{ \theta } $-Anosov representation $ \rho \colon \Gamma \to \GL \of{ V } $ of $ \Gamma $ which is block upper triangular relative to $ \mathscr{ U } $, with large eigenvalue $ \theta $-configuration $ Q = \of{ q_{ j , k } }_{ j , k } $.
\begin{enumerate}

\item[(1)]

If $ k \in \theta $ and $ 1 \leq k \leq \frac{ \dim_{ \K } \of{ V } }{ 2 } $, then $ 0 \leq q_{ j , k } \leq \frac{ \dim_{ \K } \of{ U_{ j } } }{ 2 } $ for all $ 1 \leq j \leq m $.

\item[(2)]

If $ k \in \theta $ and $ \frac{ \dim_{ \K } \of{ V } }{ 2 } \leq k < \dim_{ \K } \of{ V } $, then $ \frac{ \dim_{ \K } \of{ U_{ j } } }{ 2 } \leq q_{ j , k } \leq \dim_{ \K } \of{ U_{ j } } $ for all $ 1 \leq j \leq m $.

\end{enumerate}

\begin{proof}

Denote by $ \eta = \bigoplus_{ j = 1 }^{ m }{ \eta_{ j } } $ the block diagonalization of $ \rho $ relative to $ \mathscr{ U } $, and recall that $ \lambda_{ k } \of{ \eta \of{ \gamma } } = \lambda_{ k } \of{ \rho \of{ \gamma } } $ for all $ 1 \leq k \leq \dim_{ \K } \of{ V } $ and $ \gamma \in \Gamma $. \autoref{thm:eigenvalueConfiguration} implies that for any infinite order $ \gamma \in \Gamma $,
\[
q_{ j , k } = \dim_{ \K } \of{ U_{ j }' \cap \rho \of{ \gamma }^{ + }_{ k } } = \dim_{ \K } \of{ U_{ j } \cap \eta \of{ \gamma }^{ + }_{ k } }
\]
is the number of complex eigenvalues (counted with algebraic multiplicity) of the $ j $th block $ \eta_{ j } \of{ \gamma } $ counted among the $ k $ largest eigenvalues of $ \rho \of{ \gamma } $.

\begin{enumerate}

\item[(1)]

By contraposition. Suppose that $ q_{ j , k } > \frac{ \dim_{ \K } \of{ U_{ j } } }{ 2 } $ for some $ 1 \leq j \leq m $ and $ k \in \theta $, so that for any infinite order $ \gamma \in \Gamma $, more than half of the eigenvalues of $ \eta_{ j } \of{ \gamma } $ are among the $ k $ largest eigenvalues of $ \rho \of{ \gamma } $. In particular, there is $ \ell \in \Delta_{ U_{ j } } $ so that both
\begin{align*}
\lambda_{ \ell } \of{ \eta_{ j } \of{ \gamma } } & \geq \lambda_{ k } \of{ \rho \of{ \gamma } } & \lambda_{ \iota_{ U_{ j } } \of{ \ell } + 1 } \of{ \eta_{ j } \of{ \gamma } } \geq \lambda_{ k } \of{ \rho \of{ \gamma } }
\end{align*}
for all infinite order $ \gamma \in \Gamma $. Fix one such infinite order $ \gamma \in \Gamma $, and consider $ 1 \leq i \leq m $ so that $ i \neq j $. Note that for each $ q \in \Delta_{ U_{ i } } $, if $ \lambda_{ q } \of{ \eta_{ i } \of{ \gamma } } < \lambda_{ k } \of{ \eta \of{ \gamma } } $, then \autoref{thm:eigenvalueConfiguration} implies that $ \lambda_{ q } \of{ \eta_{ i } \of{ \gamma^{ -1 } } } < \lambda_{ k } \of{ \eta \of{ \gamma^{ -1 } } } $, so that
\begin{align*}
\lambda_{ \iota_{ U_{ k } } \of{ q } + 1 } \of{ \eta_{ i } \of{ \gamma } } & = \frac{ 1 }{ \lambda_{ q } \of{ \eta_{ i } \of{ \gamma^{ -1 } } } } > \frac{ 1 }{ \lambda_{ i } \of{ \eta \of{ \gamma^{ -1 } } } } \\
& \geq \frac{ 1 }{ \lambda_{ \iota_{ U_{ j } } \of{ \ell } + 1 } \of{ \eta_{ j } \of{ \gamma^{ -1 } } } } = \lambda_{ \ell } \of{ \eta_{ j } \of{ \gamma } } \geq \lambda_{ k } \of{ \eta \of{ \gamma } } .
\end{align*}
So if $ \lambda_{ q } \of{ \eta_{ i } \of{ \gamma } } $ is among the $ \iota_{ V } \of{ k } $ smallest eigenvalue magnitudes of $ \eta \of{ \gamma } $, then $ \lambda_{ \iota_{ U_{ i } } \of{ q } + 1 } \of{ \eta_{ i } \of{ \gamma } } $ is among the $ k $ largest eigenvalue magnitudes of $ \eta \of{ \gamma } $. We conclude that at least half of the eigenvalues of $ \eta_{ i } \of{ \gamma } $ are among the $ k $ largest eigenvalues of $ \rho \of{ \gamma } $; that is, $ q_{ i , k } \geq \frac{ \dim_{ \K } \of{ U_{ i } } }{ 2 } $ for all $ 1 \leq i \leq m $ so that $ i \neq j $. Thus
\begin{align*}
k & = \sum_{ i = 1 }^{ m }{ q_{ i , k } } = q_{ j , k } + \sum_{ i \neq j }{ q_{ i , k } } > \frac{ \dim_{ \K } \of{ U_{ j } } }{ 2 } + \sum_{ i \neq j }{ \frac{ \dim_{ \K } \of{ U_{ i } } }{ 2 } } \\
& = \frac{ 1 }{ 2 } \sum_{ i = 1 }^{ m }{ \dim_{ \K } \of{ U_{ i } } } = \frac{ 1 }{ 2 } \dim_{ \K } \of{ \bigoplus_{ i = 1 }^{ m }{ U_{ i } } } = \frac{ \dim_{ \K } \of{ V } }{ 2 } .
\end{align*}

\item[(2)]

We note that $ \rho $ is $ P_{ \iota \of{ \theta } } $-Anosov and $ 1 \leq \iota_{ V } \of{ k } \leq \frac{ \dim_{ \K } \of{ V } }{ 2 } $, so that (1) above implies that $ 0 \leq q_{ j , \iota_{ V } \of{ k } } \leq \frac{ \dim_{ \K } \of{ U_{ j } } }{ 2 } $ for all $ 1 \leq j \leq m $. In particular,
\[
\dim_{ \K } \of{ U_{ j } } = \dim_{ \K } \of{ U_{ j } } - 0 \geq \dim_{ \K } \of{ U_{ j } } - q_{ j , k } \geq \dim_{ \K } \of{ U_{ j } } - \frac{ \dim_{ \K } \of{ U_{ j } } }{ 2 } = \frac{ \dim_{ \K } \of{ U_{ j } } }{ 2 }
\]
for all $ 1 \leq j \leq m $. The desired result now follows immediately from the observation that \autoref{thm:eigenvalueConfiguration} and \autoref{cor:reducibleProximality} imply that
\begin{align*}
q_{ j , k } + q_{ j , \iota_{ V } \of{ k } } & = \dim_{ \K } \of{ U_{ j } \cap \eta \of{ \gamma }^{ + }_{ k } } + \dim_{ \K } \of{ U_{ j } \cap \eta \of{ \gamma }^{ - }_{ \iota_{ V } \of{ k } } } \\
& = \dim_{ \K } \of{ \of{ U_{ j } \cap \eta \of{ \gamma }^{ + }_{ k } } \oplus \of{ U_{ j } \cap \eta \of{ \gamma }^{ - }_{ \iota_{ V } \of{ k } } } } = \dim_{ \K } \of{ U_{ j } }
\end{align*}
for all $ 1 \leq j \leq m $ and infinite order $ \gamma \in \Gamma $. \qedhere

\end{enumerate}

\end{proof}

\end{corollary+}

\section{Deformations of Reducible Anosov Representations}

Fix for this section a direct sum decomposition $ \mathscr{ U } = \of{ U_{ j } }_{ j = 1 }^{ m } $ of a non-zero finite-dimensional vector space $ V $ over $ \K $ with projection maps $ \pi_{ j } \colon V \onto U_{ j } $ and partial sums
\[
U_{ j }' = \bigoplus_{ i = 1 }^{ j }{ U_{ i } } ,
\]
for all $ 1 \leq j \leq m $, and also fix a representation $ \zeta \colon \Gamma \to N_{ \mathscr{ U } } $, which may or may not be completely reduced relative to $ \mathscr{ U } $. Recall that for each subset $ \theta \subseteq \Delta_{ V } $, we will denote by
\begin{align*}
A^{ \mathscr{ U } }_{ \theta } \of{ \zeta } & = \set{ \varphi \in \hom \of{ \Gamma , D_{ \mathscr{ U } } } : \beta_{ \mathscr{ U } } \of{ \delta , \varphi , \zeta } \textrm{ is $ P_{ \theta } $-Anosov for some (equivalently, any) } \delta \in \hom \of{ \Gamma , \R } } \\
& = \set{ \varphi \in \hom \of{ \Gamma , D_{ \mathscr{ U } } } : \beta_{ \mathscr{ U } } \of{ 0 , \varphi , \zeta } \textrm{ is $ P_{ \theta } $-Anosov} }
\end{align*}
the subset of the finite-dimensional real vector space $ \hom \of{ \Gamma , D_{ \mathscr{ U } } } $ corresponding to $ P_{ \theta } $-Anosov block diagonal representations with block normalization $ \zeta $. We note that $ A^{ \mathscr{ U } }_{ \theta } \of{ \zeta } $ is well-defined by (2) in \autoref{cor:blockDiagonalization}. The main goal of this section is to characterize $ A^{ \mathscr{ U } }_{ \theta } \of{ \zeta } $ and derive consequences about the deformation theory of reducible Anosov representations.

\subsection{Convex Deformation Spaces of Reducible Anosov Representations}

Before we prove \autoref{thm:domain}, we will require the following supporting lemma about block normalized linear transformations and strongly structured flags. Recall from the proof of \autoref{prop:blockDiagonalIsomorphism} that there is a group isomorphism $ \Psi_{ \mathscr{ U } } \colon R \times D_{ \mathscr{ U } } \times N_{ \mathscr{ U } } \to \Aut_{ \K } \of{ U } $ defined by the formula
\[
\Psi_{ \mathscr{ U } } \of{ s , \vect{ x } , \bigoplus_{ j = 1 }^{ m }{ \vect{ B }_{ j } } } = \bigoplus_{ j = 1 }^{ m }{ e^{ s + x_{ j } } \vect{ B }_{ j } } .
\]

\begin{lemma+} \label{lem:convexity}

For each subset $ \theta \subseteq \Delta_{ V } $, $ \of{ \mathscr{ U } , \theta } $-admissible family $ Q = \of{ q_{ j , k } }_{ j , k } $, and block normalized linear transformation $ \vect{ B } \in N_{ \mathscr{ U } } $, consider the subset
\[
C^{ \mathscr{ U } , \theta }_{ Q } \of{ \vect{ B } } = \set{ \of{ s , \vect{ x } } \in \R \times D_{ \mathscr{ U } } : \Psi_{ \mathscr{ U } } \of{ s , \vect{ x } , \vect{ B } } \textrm{ is $ P_{ \theta } $-proximal and } \Psi_{ \mathscr{ U } } \of{ s , \vect{ x } , \vect{ B } }^{ + }_{ \theta } \in S^{ \mathscr{ U } , \theta }_{ Q } \of{ V } } .
\]
of $ \R \times D_{ \mathscr{ U } } $.
\begin{enumerate}

\item[(1)]

If $ \of{ s , \vect{ x } } \in C^{ \mathscr{ U } , \theta }_{ Q } \of{ \vect{ B } } $, then $ \Psi_{ \mathscr{ U } } \of{ s , \vect{ x } , \vect{ B } } $ has logarithmic eigenvalue gaps
\[
\log \of{ \frac{ \lambda_{ k } \of{ \Psi_{ \mathscr{ U } } \of{ s , \vect{ x } , \vect{ B } } } }{ \lambda_{ k + 1 } \of{ \Psi_{ \mathscr{ U } } \of{ s , \vect{ x } , \vect{ B } } } } } = \min_{ i , j }{ \log \of{ \frac{ \lambda_{ q_{ i , k } } \of{ \vect{ B }_{ i } } }{ \lambda_{ q_{ j , k } + 1 } \of{ \vect{ B }_{ j } } } } + x_{ i } - x_{ j } }
\]
for all $ k \in \theta $, where $ \vect{ B } $ has block decomposition $ \vect{ B } = \bigoplus_{ j = 1 }^{ m }{ \vect{ B }_{ j } } $ relative to $ \mathscr{ U } $ and the above minimum is indexed by all $ 1 \leq i , j \leq m $ so that $ q_{ i , k } > 0 $ and $ q_{ j , k } < \dim_{ \K } \of{ U_{ j } } $.

\item[(2)]

$ C^{ \mathscr{ U } , \theta }_{ Q } \of{ \vect{ B } } $ is convex in $ \R \times D_{ \mathscr{ U } } $.

\end{enumerate}

\begin{proof}

\begin{enumerate}

\item[(1)]

Since $ \Psi_{ \mathscr{ U } } \of{ s , \vect{ x } , \vect{ B } } $ is $ P_{ \theta } $-proximal and has block decomposition $ \Psi_{ \mathscr{ U } } \of{ s , \vect{ x } , \vect{ B } } = \bigoplus_{ j = 1 }^{ m }{ e^{ s + x_{ j } } \vect{ B }_{ j } } $ relative to $ \mathscr{ U } $, (3) in \autoref{cor:reducibleProximality} implies that it has logarithmic eigenvalue gaps
\begin{align*}
\log \of{ \frac{ \lambda_{ k } \of{ \Psi_{ \mathscr{ U } } \of{ s , \vect{ x } , \vect{ B } } } }{ \lambda_{ k + 1 } \of{ \Psi_{ \mathscr{ U } } \of{ s , \vect{ x } , \vect{ B } } } } } & = \min_{ i , j }{ \log \of{ \frac{ \lambda_{ q_{ i , k } } \of{ e^{ s + x_{ i } } \vect{ B }_{ i } } }{ \lambda_{ q_{ j , k } + 1 } \of{ e^{ s + x_{ j } } \vect{ B }_{ j } } } } } = \min_{ i , j }{ \log \of{ \frac{ e^{ x_{ i } } \lambda_{ q_{ i , k } } \of{ \vect{ B }_{ i } } }{ e^{ x_{ j } } \lambda_{ q_{ j , k } + 1 } \of{ \vect{ B }_{ j } } } } } \\
& = \min_{ j , k }{ \log \of{ \frac{ \lambda_{ q_{ i , k } } \of{ \vect{ B }_{ i } } }{ \lambda_{ q_{ j , k } + 1 } \of{ \vect{ B }_{ j } } } } } + x_{ i } - x_{ j }
\end{align*}
for all $ k \in \theta $, where the above minima are indexed by all $ 1 \leq i , j \leq m $ so that $ q_{ i , k } > 0 $ and $ q_{ j , k } < \dim_{ \K } \of{ U_{ j } } $.

\item[(2)]

Consider two points $ \of{ s^{ 0 } , \vect{ x }^{ 0 } } , \of{ s^{ 1 } , \vect{ x }^{ 1 } } \in C^{ \mathscr{ U } , \theta }_{ Q } \of{ \vect{ B } } $. For each $ 0 \leq t \leq 1 $, let
\begin{align*}
s^{ t } & = \of{ 1 - t } s^{ 0 } + t s^{ 1 } & \vect{ x }^{ t } & = \of{ 1 - t } \vect{ x }^{ 0 } + t \vect{ x }^{ 1 } ,
\end{align*}
and note that $ \Psi_{ \mathscr{ U } } \of{ s^{ t } , \vect{ x }^{ t } , \vect{ B } } $ has block decomposition
\[
\Psi_{ \mathscr{ U } } \of{ s^{ t } , \vect{ x }^{ t } , \vect{ B } } = \bigoplus_{ j = 1 }^{ m }{ e^{ s^{ t } + x_{ j }^{ t } } \vect{ B }_{ j } }
\]
relative to $ \mathscr{ U } $. Note that for each $ 1 \leq i , j \leq m $ and $ k \in \theta $ so that $ q_{ i , k } > 0 $ and $ q_{ j , k } < \dim_{ \K } \of{ U_{ j } } $,
\[
0 < \log \of{ \frac{ \lambda_{ k } \of{ \Psi_{ \mathscr{ U } } \of{ s^{ t } , \vect{ x }^{ t } , \vect{ B } } } }{ \lambda_{ k + 1 } \of{ \Psi_{ \mathscr{ U } } \of{ s^{ t } , \vect{ x }^{ t } , \vect{ B } } } } } \leq \log \of{ \frac{ \lambda_{ q_{ i , k } } \of{ \vect{ B }_{ i } } }{ \lambda_{ q_{ j , k } + 1 } \of{ \vect{ B }_{ j } } } } + x_{ i }^{ t } - x_{ j }^{ t }
\]
for $ t \in \set{ 0 , 1 } $ by (1) above. The righthand side of the above inequality is a linear (and therefore monotone) function of $ t $, and so
\[
0 < \log \of{ \frac{ \lambda_{ q_{ i , k } } \of{ \vect{ B }_{ i } } }{ \lambda_{ q_{ j , k } + 1 } \of{ \vect{ B }_{ j } } } } + x_{ i }^{ t } - x_{ j }^{ t } = \log \of{ \frac{ \lambda_{ q_{ i , k } } \of{ e^{ s^{ t } + x_{ j }^{ t } } \vect{ B }_{ i } } }{ \lambda_{ q_{ j , k } + 1 } \of{ e^{ s^{ t } + x_{ j }^{ t } } \vect{ B }_{ j } } } }
\]
for all $ 0 \leq t \leq 1 $. We conclude that
\[
\min_{ i }{ \lambda_{ q_{ i , k } } \of{ e^{ s^{ t } + x_{ i }^{ t } } \vect{ B }_{ i } } } > \max_{ j }{ \lambda_{ q_{ j , k } + 1 } \of{ e^{ s^{ t } + x_{ j }^{ t } } \vect{ B }_{ j } } }
\]
for all $ k \in \theta $ and $ 0 \leq t \leq 1 $, where the above minimum and maximum are indexed by all $ 1 \leq i , j \leq m $ so that $ q_{ i , k } > 0 $ and $ q_{ j , k } < \dim_{ \K } \of{ U_{ j } } $. Since $ Q $ is $ \of{ \mathscr{ U } , \theta } $-admissible, this implies that $ \Psi_{ \mathscr{ U } } \of{ s^{ t } , \vect{ x }^{ t } , \vect{ B } } $ is $ P_{ \theta } $-proximal for all $ 0 \leq t \leq 1 $, and moreover that
\[
\dim_{ \K } \of{ U_{ j } \cap \Psi_{ \mathscr{ U } } \of{ s^{ t } , \vect{ x }^{ t } , \vect{ B }^{ + }_{ k } } } = \dim_{ \K } \of{ \of{ e^{ s^{ t } + x_{ j }^{ t } } \vect{ B }_{ j } }^{ + }_{ q_{ j , k } } } = q_{ j , k }
\]
for all $ 1 \leq j \leq m $ and $ k \in \theta $. Thus $ \Psi_{ \mathscr{ U } } \of{ s^{ t } , \vect{ x }^{ t } , \vect{ B } }^{ + }_{ \theta } \in S^{ \mathscr{ U } , \theta }_{ Q } \of{ V } $ is strongly $ \of{ \mathscr{ U } , \theta , Q } $-structured, and so $ \of{ s^{ t } , \vect{ x }^{ t } } \in C^{ \mathscr{ U } , \theta }_{ Q } \of{ \vect{ B } } $. Since this holds for all $ 0 \leq t \leq 1 $, $ C^{ \mathscr{ U } , \theta }_{ Q } \of{ \vect{ B } } $ is convex in $ \R \times D_{ \mathscr{ U } } $. \qedhere

\end{enumerate}

\end{proof}

\end{lemma+}

We are now ready to use the tools developed above to prove \autoref{thm:domain}, which we restate below for the reader's convenience:

\domain*

\begin{proof}

That $ A^{ \mathscr{ U } }_{ \theta } \of{ \zeta } $ is open in $ \hom \of{ \Gamma , D_{ \mathscr{ U } } } $ follows immediately from \autoref{thm:stability}, the stability of the $ P_{ \theta } $-Anosov condition. It remains to show convexity and boundedness.

Fix an infinite order element $ \gamma_{ 0 } \in \comm{ \Gamma }{ \Gamma } $ of the commutator subgroup $ \comm{ \Gamma }{ \Gamma } \leq \Gamma $, and consider the family $ Q = \of{ q_{ j , k } }_{ j , k } $ indexed by $ 1 \leq j \leq m $ and $ k \in \theta $ and given by the formula
\[
q_{ j , k } = \dim_{ \K } \of{ U_{ j } \cap \zeta \of{ \gamma_{ 0 } } }^{ + }_{ k } .
\]
Note that since $ \R $ and $ D_{ \mathscr{ U } } $ are abelian, $ \gamma_{ 0 } \in \ker \of{ \delta } \cap \ker \of{ \varphi } $ for all homomorphisms $ \delta \colon \Gamma \to \R $ and $ \varphi \colon \Gamma \to D_{ \mathscr{ U } } $. We see that
\[
\beta_{ \mathscr{ U } } \of{ \delta , \varphi , \zeta } \of{ \gamma_{ 0 } } = \Psi_{ \mathscr{ U } } \of{ \delta \of{ \gamma_{ 0 } } , \varphi \of{ \gamma_{ 0 } } , \zeta \of{ \gamma_{ 0 } } } = \Psi_{ \mathscr{ U } } \of{ 0 , \vect{ 0 } , \zeta \of{ \gamma_{ 0 } } } = \bigoplus_{ j = 1 }^{ m }{ e^{ 0 + 0 } \zeta_{ j } \of{ \gamma_{ 0 } } } = \bigoplus_{ j = 1 }^{ m }{ \zeta_{ j } \of{ \gamma_{ 0 } } } = \zeta \of{ \gamma_{ 0 } }
\]
for all homomorphisms $ \delta \colon \Gamma \to \R $ and $ \varphi \colon \Gamma \to D_{ \mathscr{ U } } $. This implies that $ Q $ is the large eigenvalue $ \theta $-configuration relative to $ \mathscr{ U } $ of every $ P_{ \theta } $-Anosov representation of the form $ \beta_{ \mathscr{ U } } \of{ \delta , \varphi , \zeta } \colon \Gamma \to \Aut_{ \K } \of{ \mathscr{ U } } $.

For convexity, fix $ \varphi^{ 0 } , \varphi^{ 1 } \in A^{ \mathscr{ U } }_{ \theta } \of{ \zeta } $, so that the block diagonal representations $ \rho^{ 0 } = \beta_{ \mathscr{ U } } \of{ \delta^{ 0 } , \varphi^{ 0 } , \zeta } $ and $ \rho^{ 1 } = \beta_{ \mathscr{ U } } \of{ \delta^{ 1 } , \varphi^{ 1 } , \zeta } $ are $ P_{ \theta } $-Anosov for some homomorphisms $ \delta^{ 0 } , \delta^{ 1 } \colon \Gamma \to \R $. Then for each $ k \in \theta $, there are constants $ a_{ k }^{ 0 } , a_{ k }^{ 1 } > 0 $ and $ b_{ k }^{ 0 } , b_{ k }^{ 1 } \geq 0 $ so that
\begin{align*}
\log \of{ \frac{ \lambda_{ k } \of{ \rho^{ 0 } \of{ \gamma } } }{ \lambda_{ k + 1 } \of{ \rho^{ 0 } \of{ \gamma } } } } & \geq a_{ k }^{ 0 } \norm{ \gamma }_{ \Sigma } - b_{ k }^{ 0 } & \log \of{ \frac{ \lambda_{ k } \of{ \rho^{ 1 } \of{ \gamma } } }{ \lambda_{ k + 1 } \of{ \rho^{ 1 } \of{ \gamma } } } } & \geq a_{ k }^{ 1 } \norm{ \gamma }_{ \Sigma } - b_{ k }^{ 1 }
\end{align*}
for all $ \gamma \in \Gamma $. Let
\begin{align*}
\varphi^{ t } & = \of{ 1 - t } \varphi^{ 0 } + t \varphi^{ 1 } & a_{ k }^{ t } & = \of{ 1 - t } a_{ k }^{ 0 } + t a_{ k }^{ 1 } & \delta^{ t } & = \of{ 1 - t } \delta^{ 0 } + t \delta^{ 1 } \\
\rho^{ t } & = \beta_{ \mathscr{ U } } \of{ \delta^{ t } , \varphi^{ t } , \zeta } & b_{ k }^{ t } & = \of{ 1 - t } b_{ k }^{ 0 } + t b_{ k }^{ 1 }
\end{align*}
for all $ 0 < t < 1 $ and $ k \in \theta $. Note that \autoref{thm:eigenvalueConfiguration} implies that $ Q $ is $ \of{ \mathscr{ U } , \theta } $-admissible and that $ \of{ \delta^{ t } \of{ \gamma } , \varphi^{ t } \of{ \gamma } } \in C^{ \mathscr{ U } , \theta }_{ Q } \of{ \eta \of{ \gamma } } $ for all $ t \in \set{ 0 , 1 } $ and infinite order $ \gamma \in \Gamma $. We conclude by (2) in \autoref{lem:convexity} that $ \of{ \delta^{ t } \of{ \gamma } , \varphi^{ t } \of{ \gamma } } \in C^{ \mathscr{ U } , \theta }_{ Q } \of{ \eta \of{ \gamma } } $ for all $ 0 \leq t \leq 1 $ and infinite order $ \gamma \in \Gamma $. (1) in \autoref{lem:convexity} now implies that
\[
\log \of{ \frac{ \lambda_{ k } \of{ \rho^{ t } \of{ \gamma } } }{ \lambda_{ k + 1 } \of{ \rho^{ t } \of{ \gamma } } } } = \min_{ i , j }{ \log \of{ \frac{ \lambda_{ q_{ i , k } } \of{ \zeta_{ i } \of{ \gamma } } }{ \lambda_{ q_{ j , k } + 1 } \of{ \zeta_{ j } \of{ \gamma } } } } + \varphi_{ i }^{ t } \of{ \gamma } - \varphi_{ j }^{ t } \of{ \gamma } }
\]
for all $ k \in \theta $, $ 0 \leq t \leq 1 $, and infinite order $ \gamma \in \Gamma $, where the above minimum is indexed by all $ 1 \leq i , j \leq m $ so that $ q_{ i , k } > 0 $ and $ q_{ j , k } < \dim_{ \K } \of{ U_{ j } } $. We observe that the above also holds for finite order $ \gamma \in \Gamma $, since both sides of the above equation vanish. Note that for all $ 0 \leq t \leq 1 $,
\begin{align*}
\log \of{ \frac{ \lambda_{ q_{ i , k } } \of{ \zeta_{ i } \of{ \gamma } } }{ \lambda_{ q_{ j , k } + 1 } \of{ \zeta_{ j } \of{ \gamma } } } } + \varphi_{ i }^{ t } \of{ \gamma } - \varphi_{ j }^{ t } \of{ \gamma } & = \of{ 1 - t } \of{ \log \of{ \frac{ \lambda_{ q_{ i , k } } \of{ \zeta_{ i } \of{ \gamma } } }{ \lambda_{ q_{ j , k } + 1 } \of{ \zeta_{ j } \of{ \gamma } } } } + \varphi_{ i }^{ 0 } \of{ \gamma } - \varphi_{ j }^{ 0 } \of{ \gamma } } \\
& \qquad + t \of{ \log \of{ \frac{ \lambda_{ q_{ i , k } } \of{ \zeta_{ i } \of{ \gamma } } }{ \lambda_{ q_{ j , k } + 1 } \of{ \zeta_{ j } \of{ \gamma } } } } + \varphi_{ i }^{ 1 } \of{ \gamma } - \varphi_{ j }^{ 1 } \of{ \gamma } } \\
& \geq \of{ 1 - t } \log \of{ \frac{ \lambda_{ k } \of{ \rho^{ 0 } \of{ \gamma } } }{ \lambda_{ k + 1 } \of{ \rho^{ 0 } \of{ \gamma } } } } + t \log \of{ \frac{ \lambda_{ k } \of{ \rho^{ 1 } \of{ \gamma } } }{ \lambda_{ k + 1 } \of{ \rho^{ 1 } \of{ \gamma } } } } \\
& \geq \of{ 1 - t } \of{ a_{ k }^{ 0 } \norm{ \gamma }_{ \Sigma } - b_{ k }^{ 0 } } + t \of{ a_{ k }^{ 1 } \norm{ \gamma }_{ \Sigma } - b_{ k }^{ 1 } } = a_{ k }^{ t } \norm{ \gamma }_{ \Sigma } - b_{ k }^{ t }
\end{align*}
grows at least linearly in translation length $ \norm{ \gamma }_{ \Sigma } $ for all $ 1 \leq i , j \leq m $ and $ k \in \theta $ so that $ q_{ i , k } > 0 $ and $ q_{ j , k } < \dim_{ \K } \of{ U_{ j } } $. This implies that
\[
\log \of{ \frac{ \lambda_{ k } \of{ \rho^{ t } \of{ \gamma } } }{ \lambda_{ k + 1 } \of{ \rho^{ t } \of{ \gamma } } } } = \min_{ i , j }{ \log \of{ \frac{ \lambda_{ q_{ i , k } } \of{ \zeta_{ i } \of{ \gamma } } }{ \lambda_{ q_{ j , k } + 1 } \of{ \zeta_{ j } \of{ \gamma } } } } + \varphi_{ i }^{ t } \of{ \gamma } - \varphi_{ j }^{ t } \of{ \gamma } }
\]
grows at least linearly in $ \norm{ \gamma }_{ \Sigma } $ for all $ k \in \theta $ and $ 0 \leq t \leq 1 $, so that $ \rho^{ t } $ is $ P_{ \theta } $-Anosov. We conclude that $ A^{ \mathscr{ U } }_{ \theta } \of{ \zeta } $ is convex in $ \hom \of{ \Gamma , D_{ \mathscr{ U } } } $.

For boundedness, we first note that if $ m = 1 $, then $ \hom \of{ \Gamma , D_{ \mathscr{ U } } } = \set{ 0 } $ and so every subset is bounded. Thus we may assume that $ m \geq 2 $. Moreover,
\[
A^{ \mathscr{ U } }_{ \theta } \of{ \zeta } = \bigcap_{ k \in \theta }{ A^{ \mathscr{ U } }_{ \set{ k } } \of{ \zeta } } = \bigcap_{ \substack{ k \in \theta \\ 1 \leq k \leq \frac{ \dim_{ \K } \of{ V } }{ 2 } } }{ A^{ \mathscr{ U } }_{ \set{ k } } \of{ \zeta } }
\]
by \autoref{fact:AnosovInvolution}, and so it suffices to show that $ A^{ \mathscr{ U } }_{ \set{ k } } \of{ \zeta } $ is bounded for all $ k \in \Delta_{ V } $ so that $ 1 \leq k \leq \frac{ \dim_{ \K } \of{ V } }{ 2 } $.

To that end, suppose for a contradiction that $ A^{ \mathscr{ U } }_{ \set{ k } } \of{ \zeta } $ is unbounded for some $ k \in \Delta_{ V } $ so that $ 1 \leq k \leq \frac{ \dim_{ \K } \of{ V } }{ 2 } $. As an unbounded convex set in a finite-dimensional real vector space, it contains a ray $ \set{ \varphi + t \psi \in \hom \of{ \Gamma , D_{ \mathscr{ U } } } : t \geq 0 } \subseteq A^{ \mathscr{ U } }_{ \set{ k } } \of{ \zeta } $. For each $ t \geq 0 $, let $ \rho^{ t } = \beta_{ \mathscr{ U } } \of{ 0 , \varphi + t \psi , \zeta } $, and note that $ \rho^{ t } $ has large eigenvalue $ \theta $-configuration $ Q $ relative to $ \mathscr{ U } $. \autoref{cor:halfLargeEigenvalues} implies that
\[
0 \leq q_{ j , k } \leq \frac{ \dim_{ \K } \of{ U_{ j } } }{ 2 } < \dim_{ \K } \of{ U_{ j } }
\]
for all $ 1 \leq j \leq m $. Fix $ 1 \leq i \leq m $ so that $ q_{ i , k } > 0 $; such an $ i $ exists by \autoref{lem:nonEmptyAdmissibleCondition}. If $ \psi_{ i } \of{ \gamma } = \psi_{ j } \of{ \gamma } $ for all $ 1 \leq j \leq m $ and $ \gamma \in \Gamma $, then
\[
0 = \sum_{ j = 1 }^{ m }{ \dim_{ \K } \of{ U_{ j } } \psi_{ j } \of{ \gamma } } = \sum_{ j = 1 }^{ m }{ \dim_{ \K } \of{ U_{ j } } \psi_{ i } \of{ \gamma } } = \dim_{ \K } \of{ \bigoplus_{ j = 1 }^{ m }{ U_{ j } } } \psi_{ i } \of{ \gamma } = \dim_{ \K } \of{ V } \psi_{ i } \of{ \gamma }
\]
for all $ \gamma \in \Gamma $, so that $ \psi = 0 $. This contradiction implies that $ \psi_{ i } \of{ \gamma } < \psi_{ j } \of{ \gamma } $ for some $ 1 \leq j \leq m $ and $ \gamma \in \Gamma $, which necessarily has infinite order in $ \Gamma $ because $ \R $ is torsion-free. We now observe that \autoref{thm:eigenvalueConfiguration} implies that
\[
\lambda_{ k } \of{ \rho^{ t } \of{ \gamma } } \leq e^{ \varphi_{ i } \of{ \gamma } + t \psi_{ i } \of{ \gamma } } \lambda_{ q_{ i , k } } \of{ \zeta_{ i } \of{ \gamma } } \leq e^{ \varphi_{ j } \of{ \gamma } + t \psi_{ j } \of{ \gamma } } \lambda_{ q_{ j , k } + 1 } \of{ \zeta_{ j } \of{ \gamma } } \leq \lambda_{ k + 1 } \of{ \rho^{ t } \of{ \gamma } }
\]
whenever
\[
t \geq \frac{ 1 }{ \psi_{ j } \of{ \gamma } - \psi_{ i } \of{ \gamma } } \of{ \log \of{ \frac{ \lambda_{ q_{ i , k } } \of{ \zeta_{ i } \of{ \gamma } } }{ \lambda_{ q_{ j , k } + 1 } \of{ \zeta_{ j } \of{ \gamma } } } } + \varphi_{ i } \of{ \gamma } - \varphi_{ j } \of{ \gamma } } .
\]
This contradiction implies that $ A^{ \mathscr{ U } }_{ \set{ k } } \of{ \zeta } $ is bounded. \qedhere

\end{proof}

One important consequence of \autoref{thm:domain} is that for many word hyperbolic groups, components of the character variety containing (equivalence classes of) reducible representations must also contain (equivalence classes of) non-Anosov representations. The following lemma details the conditions on $ \Gamma $ for which these conclusions apply.

\begin{lemma+} \label{lem:commutatorSubgroupHomomorphisms}

The commutator subgroup $ \comm{ \Gamma }{ \Gamma } $ has finite index in $ \Gamma $ if and only if $ \hom \of{ \Gamma , D_{ \mathscr{ U } } } = \set{ 0 } $.

\begin{proof}

First suppose that $ \comm{ \Gamma }{ \Gamma } $ has finite index in $ \Gamma $, and consider a block deformation $ \Gamma \to D_{ \mathscr{ U } } $. Since $ D_{ \mathscr{ U } } $ is abelian, this block deformation factors through a homomorphism $ \faktor{ \Gamma }{ \comm{ \Gamma }{ \Gamma } } $. However, since $ \faktor{ \Gamma }{ \comm{ \Gamma }{ \Gamma } } $ is finite and $ D_{ \mathscr{ U } } $ is torsion-free, there are no non-zero homomorphisms $ \faktor{ \Gamma }{ \comm{ \Gamma }{ \Gamma } } \to D_{ \mathscr{ U } } $. Thus there are no non-zero block deformations $ \Gamma \to D_{ \mathscr{ U } } $.

On the other hand, suppose that $ \comm{ \Gamma }{ \Gamma } $ has infinite index in $ \Gamma $. Since $ \Gamma $ is finitely generated, its abelianization $ \faktor{ \Gamma }{ \comm{ \Gamma }{ \Gamma } } $ is a finitely generated infinite abelian group, and therefore contains an infinite cyclic group $ C $ as a direct summand. Since $ \dim_{ \K } \of{ V } > 0 $, $ \dim_{ \K } \of{ D_{ \mathscr{ U } } } > 0 $, and we may choose a nonzero $ \vect{ x } \in D_{ \mathscr{ U } } $. The composition
\[
\begin{tikzcd}
\Gamma \ar[ r , two heads ] & \faktor{ \Gamma }{ \comm{ \Gamma }{ \Gamma } } \ar[ r , two heads ] & C \ar[ r , two heads ] & \Z \vect{ x } \ar[ r , hook ] & D_{ \mathscr{ U } }
\end{tikzcd}
\]
is a non-zero block deformation $ \Gamma \to D_{ \mathscr{ U } } $. \qedhere

\end{proof}

\end{lemma+}

\teichmuller*

\begin{proof}

By contraposition. Fix a connected component $ C $ of $ \Char_{ H } \of{ \Gamma , G } $ which contains the $ H $-equivalence class $ \class{ \rho }_{ H } $ of a reducible representation $ \rho \colon \Gamma \to G $. Let $ \mathscr{ U } $ be a non-trivial direct sum decomposition of $ V $ whose associated flag is preserved by $ \rho $. \autoref{cor:blockDiagonalization} implies that $ \rho $ is $ H $-equivalent to a block diagonal representation $ \beta_{ \mathscr{ U } } \of{ \delta , \varphi , \zeta } $ relative to $ \mathscr{ U } $.

Consider the continuous map $ D \colon \hom \of{ \Gamma , D_{ \mathscr{ U } } } \to \Char_{ H } \of{ \Gamma , G } $ defined by the formula $ D \of{ \psi } = \class{ \beta_{ \mathscr{ U } } \of{ \delta , \psi , \zeta } }_{ H } $, and denote by $ A^{ \mathscr{ U } } \of{ \zeta } $ the subset of $ \hom \of{ \Gamma , D_{ \mathscr{ U } } } $ corresponding to block diagonal Anosov representations with block normalization $ \zeta $ relative to $ \mathscr{ U } $; that is,
\[
A^{ \mathscr{ U } } \of{ \zeta } = \bigcup_{ \emptyset \subsetneq \theta \subseteq \Delta_{ V } }{ A^{ \mathscr{ U } }_{ \theta } \of{ \zeta } } .
\]
By \autoref{thm:domain}, $ A^{ \mathscr{ U } } \of{ \zeta } $ is a bounded subset of the finite-dimensional real vector space $ \hom \of{ \Gamma , D_{ \mathscr{ U } } } $. \autoref{lem:commutatorSubgroupHomomorphisms} implies that $ \hom \of{ \Gamma , D_{ \mathscr{ U } } } $ is a non-zero vector space, so that this bounded subset $ A^{ \mathscr{ U } } \of{ \zeta } $ is not the entirety of $ \hom \of{ \Gamma , D_{ \mathscr{ U } } } $; that is, there is some $ \psi \in \hom \of{ \Gamma , D_{ \mathscr{ U } } } \setminus A^{ \mathscr{ U } } \of{ \zeta } $. We note that $ D \of{ \psi } \in C $ is not Anosov. \qedhere

\end{proof}

We remark that the hypothesis that $ \comm{ \Gamma }{ \Gamma } $ has infinite index in $ \Gamma $ is necessary in order for the above argument to work. If $ \comm{ \Gamma }{ \Gamma } $ has finite index in $ \Gamma $, then \autoref{lem:commutatorSubgroupHomomorphisms} implies that there are no non-zero homomorphisms $ \Gamma \to D_{ \mathscr{ U } } $, and these homomorphisms parametrize the relevant block deformations of the block normalized representation $ \zeta \colon \Gamma \to N_{ \mathscr{ U } } $.

\subsection{Another Characterizations of the Anosov Condition}

For the remainder of this section, we aim to use the tools developed above to extend the characterization of the Anosov condition for reducible representations presented in \autoref{thm:eigenvalueConfiguration} to some class of reducible representations. Specifically, our condition will hold for block upper triangular representations for which the block normalization of the block diagonalization is itself Anosov.

\AnosovCharacterization*

\begin{proof}

Fix homomorphisms $ \delta \colon \Gamma \to \R $ and $ \varphi \colon \Gamma \to D_{ \mathscr{ U } } $, and consider the block diagonal representation $ \eta = \beta_{ \mathscr{ U } } \of{ \delta , \varphi , \zeta } \colon \Gamma \to \Aut_{ \K } \of{ \mathscr{ U } } $. Let $ s $ denote the above supremum; that is,
\[
s = \sup_{ i , j , k , \gamma }{ \frac{ \varphi_{ j } \of{ \gamma } - \varphi_{ i } \of{ \gamma } }{ \log \of{ \frac{ \lambda_{ q_{ i , k } } \of{ \zeta_{ i } \of{ \gamma } } }{ \lambda_{ q_{ j , k } + 1 } \of{ \zeta_{ j } \of{ \gamma } } } } } } ,
\]
where this supremum is indexed by all $ 1 \leq i , j \leq m $ and $ k \in \theta $ so that $ q_{ i , k } > 0 $ and $ q_{ j , k } < \dim_{ \K } \of{ U_{ j } } $, and all infinite order $ \gamma \in \Gamma $. It suffices to show that $ \eta $ is $ P_{ \theta } $-Anosov if and only if $ s < 1 $.

To that end, we note that since $ \zeta $ is $ P_{ \theta } $-Anosov with large eigenvalue $ \theta $-configuration $ Q $ relative to $ \mathscr{ U } $, \autoref{thm:eigenvalueConfiguration} implies that $ \lambda_{ q_{ i , j } } \of{ \zeta_{ i } \of{ \gamma } } > \lambda_{ q_{ j , k } + 1 } \of{ \zeta_{ j } \of{ \gamma } } $ for all $ 1 \leq i , j \leq m $ and $ k \in \theta $ so that $ q_{ i , k } > 0 $ and $ q_{ j , k } < \dim_{ \K } \of{ U_{ j } } $ and all infinite order $ \gamma \in \Gamma $. We conclude that
\[
s \log \of{ \frac{ \lambda_{ q_{ i , k } } \of{ \zeta_{ i } \of{ \gamma } } }{ \lambda_{ q_{ j , k } + 1 } \of{ \zeta_{ j } \of{ \gamma } } } } \geq \varphi_{ j } \of{ \gamma } - \varphi_{ i } \of{ \gamma }
\]
for all such $ i $, $ j $, $ k $, and $ \gamma $.

First suppose that $ s < 1 $, and note that
\begin{align*}
\log \of{ \frac{ \lambda_{ q_{ i , k } } \of{ \eta_{ i } \of{ \gamma } } }{ \lambda_{ q_{ j , k } + 1 } \of{ \eta_{ j } \of{ \gamma } } } } & = \log \of{ \frac{ e^{ \delta \of{ \gamma } + \varphi_{ i } \of{ \gamma } } \lambda_{ q_{ i , k } } \of{ \zeta_{ i } \of{ \gamma } } }{ e^{ \delta \of{ \gamma } + \varphi_{ j } \of{ \gamma } } \lambda_{ q_{ j , k } + 1 } \of{ \zeta_{ j } \of{ \gamma } } } } = \log \of{ \frac{ \lambda_{ q_{ i , k } } \of{ \zeta_{ i } \of{ \gamma } } }{ \lambda_{ q_{ j , k } + 1 } \of{ \zeta_{ j } \of{ \gamma } } } + \varphi_{ i } \of{ \gamma } - \varphi_{ j } \of{ \gamma } } \\
& \geq \of{ 1 - s } \log \of{ \frac{ \lambda_{ q_{ i , k } } \of{ \zeta_{ i } \of{ \gamma } } }{ \lambda_{ q_{ j , k } + 1 } \of{ \zeta_{ j } \of{ \gamma } } } }
\end{align*}
grows at least linearly in translation length $ \norm{ \gamma }_{ \Sigma } $. Thus $ \eta $ is $ P_{ \theta } $-Anosov by \autoref{thm:eigenvalueConfiguration}.

Conversely, now suppose that $ s > 1 $, so that
\[
\log \of{ \frac{ \lambda_{ q_{ i , k } } \of{ \zeta_{ i } \of{ \gamma } } }{ \lambda_{ q_{ j , k } + 1 } \of{ \zeta_{ j } \of{ \gamma } } } } < \varphi_{ j } \of{ \gamma } - \varphi_{ i } \of{ \gamma }
\]
for some $ 1 \leq i , j \leq m $ and $ k \in \theta $ so that $ q_{ i , k } > 0 $ and $ q_{ j , k } < \dim_{ \K } \of{ U_{ j } } $ and some infinite order $ \gamma \in \Gamma $. We see that
\begin{align*}
\log \of{ \frac{ \lambda_{ q_{ i , k } } \of{ \eta_{ i } \of{ \gamma } } }{ \lambda_{ q_{ j , k } + 1 } \of{ \eta_{ j } \of{ \gamma } } } } & = \log \of{ \frac{ \lambda_{ q_{ i , k } } \of{ e^{ \delta \of{ \gamma } + \varphi_{ i } \of{ \gamma } } \zeta_{ i } \of{ \gamma } } }{ \lambda_{ q_{ j , k } + 1 } \of{ e^{ \delta \of{ \gamma } + \varphi_{ j } \of{ \gamma } } \zeta_{ j } \of{ \gamma } } } } = \log \of{ \frac{ e^{ \delta \of{ \gamma } + \varphi_{ i } \of{ \gamma } } \lambda_{ q_{ i , k } } \of{ \zeta_{ i } \of{ \gamma } } }{ e^{ \delta \of{ \gamma } + \varphi_{ j } \of{ \gamma } } \lambda_{ q_{ j , k } + 1 } \of{ \zeta_{ j } \of{ \gamma } } } } \\
& = \log \of{ \frac{ \lambda_{ q_{ i , k } } \of{ \zeta_{ i } \of{ \gamma } } }{ \lambda_{ q_{ j , k } + 1 } \of{ \zeta_{ j } \of{ \gamma } } } } + \varphi_{ i } \of{ \gamma } - \varphi_{ j } \of{ \gamma } \\
& = \log \of{ \frac{ \lambda_{ q_{ i , k } } \of{ \zeta_{ i } \of{ \gamma } } }{ \lambda_{ q_{ j , k } + 1 } \of{ \zeta_{ j } \of{ \gamma } } } } - \of{ \varphi_{ j } \of{ \gamma } - \varphi_{ i } \of{ \gamma } } < 0 .
\end{align*}
If $ \eta $ were $ P_{ \theta } $-Anosov, then this would imply that it has a different large eigenvalue $ \theta $-configuration relative to $ \mathscr{ U } $ than does its block normalization $ \zeta $ relative to $ \mathscr{ U } $. This is impossible, as argued in the proof of \autoref{thm:domain}. Thus this contradiction implies that $ \eta $ is not $ P_{ \theta } $-Anosov.

We have shown that $ \eta $ is $ P_{ \theta } $-Anosov whenever $ s < 1 $ and that $ \eta $ is not $ P_{ \theta } $-Anosov whenever $ s > 1 $. It remains to investigate the case $ s = 1 $. In this case, note that since $ s $ is a homogeneous function of $ \varphi $, $ \eta $ is a limit of representations which by the above arguments are not $ P_{ \theta } $-Anosov. \autoref{thm:stability}, the stability of the $ P_{ \theta } $-Anosov condition, implies that $ \eta $ is not $ P_{ \theta } $-Anosov in this case. \qedhere

\end{proof}

If the block normalization of the block diagonalization of a block upper triangular representation is Anosov, then \autoref{thm:AnosovCharacterization} gives a complete characterization of when said block upper triangular representation is Anosov. However, this characterization relies on the computation of countably infinitely many numbers, as can be seen from the index set of the above supremum.

On the other hand, many of the linear inequalities enforced by such conditions are redundant, as
\[
\frac{ \varphi_{ j } \of{ \gamma } - \varphi_{ i } \of{ \gamma } }{ \log \of{ \frac{ \lambda_{ q_{ i , k } } \of{ \zeta_{ i } \of{ \gamma } } }{ \lambda_{ q_{ j , k } + 1 } \of{ \zeta_{ j } \of{ \gamma } } } } }
\]
is a homogeneous class function of $ \gamma $ for fixed $ 1 \leq i , j \leq m $ and $ k \in \theta $ so that $ q_{ i , k } > 0 $ and $ q_{ j , k } < \dim_{ \K } \of{ U_{ j } } $. Thus the supremum in \autoref{thm:AnosovCharacterization} can instead be indexed with primitive elements in the torsion-free part of the abelianization of $ \Gamma $. While this may still result in a countably infinite indexing set, it begs the question of whether it is possible to achieve the same result by checking only finitely many elements.

\begin{question+} \label{que:polytope}

If the representation $ \zeta \colon \Gamma \to N_{ \mathscr{ U } } $ is $ P_{ \theta } $-Anosov, is $ A^{ \mathscr{ U } }_{ \theta } \of{ \zeta } $ a (finite-sided) polytope? Can the supremum in \autoref{thm:AnosovCharacterization} be realized as a maximum over finitely many $ \gamma \in \Gamma $?

\end{question+}

Preliminary work towards explicitly computing the domains $ A^{ \mathscr{ U } }_{ \theta } \of{ \zeta } $ in cases of limited complexity suggest that it may be possible that the answers to \autoref{que:polytope} are affirmative in some cases.

\printbibliography

\end{document}